\def\subsection{\@startsection{subsection}{3}%
  \z@{.5\linespacing\@plus.7\linespacing}{.7\linespacing}%
  {\normalfont\bfseries}} \def\subsubsection{\@startsection{subsubsection}{3}%
  \z@{.5\linespacing\@plus.7\linespacing}{.7\linespacing}%
  {\normalfont\itshape}} \def\paragraph{\@startsection{paragraph}{3}%
  \z@{.5\linespacing\@plus.7\linespacing}{.7\linespacing}%
  {\normalfont\itshape}} \makeatother
\DeclareSymbolFont{sfoperators}{OT1}{cmss}{m}{n}
\DeclareSymbolFontAlphabet{\mathsf}{sfoperators}%
\def\operator@font{\mathgroup\symsfoperators}%
\numberwithin{equation}{subsection}
\declaretheorem[style=theorem,qed=\qedsymbol,numberwithin=subsection]{theorem}
\declaretheorem[style=theorem,qed=\qedsymbol,sibling=theorem]{proposition}
\declaretheorem[style=theorem,qed=\qedsymbol,sibling=theorem]{lemma}
\declaretheorem[style=theorem,qed=\qedsymbol,sibling=theorem]{corollary}
\declaretheorem[style=definition,sibling=theorem,qed=\qedsymbol]{definition}
\declaretheorem[style=definition,numbered=no,qed=\qedsymbol,name=Definition]{definition*}
\declaretheorem[style=definition,name=Definition-Proposition,sibling=theorem,qed=\qedsymbol]{defprop}
\declaretheorem[style=remark,qed=\qedsymbol,sibling=theorem]{notation}
\declaretheorem[style=remark,qed=\qedsymbol,sibling=theorem]{remark}
\declaretheorem[style=remark,numbered=no,qed=\qedsymbol,name=Remark]{remark*}
\declaretheorem[style=remark,qed=\qedsymbol,sibling=theorem]{example}
\declaretheorem[style=remark,numbered=no,qed=\qedsymbol,name=Example]{example*}
\declaretheorem[style=theorem,qed=\qedsymbol,name=Theorem]{theorem*}
\newcommand{\kk}{\mathbf{k}}
\renewcommand{\AA}{\mathbb{A}}
\newcommand{\ZZ}{\mathbb{Z}}
\newcommand{\category}[1]{\mathcal{#1}}
\newcommand{\A}{\category{A}}
\newcommand{\F}{\category{F}}
\newcommand{\M}{\category{M}}
\newcommand{\U}{\category{U}}
\newcommand{\W}{\category{W}}
\newcommand{\X}{\category{X}}
\NewDocumentCommand{\Ho}{o}{\operatorname{Ho}(#1)}
\NewDocumentCommand{\mmod}{m}{\operatorname{mod}(#1)}
\NewDocumentCommand{\perf}{m}{\operatorname{perf}(#1)}
\NewDocumentCommand{\Df}{m}{\operatorname{D}^{\mathrm{fd}}(#1)}
\NewDocumentCommand{\wS}{o o o}{%
  % Waldhausen S-construction
  \operatorname{S}_{\IfValueTF{#2}{#2}{\bullet}}%
  \IfValueT{#1}{^{\langle{#1}\rangle}}%
  \IfValueT{#3}{\!\left(#3\right)}
}
\NewDocumentCommand{\Aus}{s m m}{A_{#2,#3}\IfBooleanT{#1}{^{\vee}}}
\NewDocumentCommand{\dgAus}{s m m}{\A_{#2,#3}\IfBooleanT{#1}{^{\vee}}}
\NewDocumentCommand{\dgBruhat}{s m m}{\mathcal{B}_{#2,#3}\IfBooleanT{#1}{^{\vee}}}
\newcommand{\theSurface}{\Sigma} \newcommand{\theDivisor}{D}
\newcommand{\theDisk}{\mathbb{D}}
\NewDocumentCommand{\Sym}{m m}{\operatorname{Sym}^{#1}(#2)}
\NewDocumentCommand{\WF}{s m m}{\IfBooleanTF{#1}{\overline{\W}}{\W}_{#2}^{(#3)}}
\NewDocumentCommand{\UF}{m m}{\U_{#1}^{(#2)}}
\NewDocumentCommand{\Fun}{o o o o}{%
  \operatorname{Fun}\IfValueT{#3}{^{#3}}\IfValueT{#4}{_{#4}}\IfValueT{#1}{(#1,#2)}%
}
\RenewDocumentCommand{\hom}{O{-} O{-} o o}{%
  \operatorname{hom}\IfValueT{#3}{_{#3}}\IfValueT{#4}{^{#4}}({#1},{#2})%
}
\NewDocumentCommand{\Hom}{O{-} O{-} o o}{%
  \operatorname{Hom}\IfValueT{#3}{_{#3}}\IfValueT{#4}{^{#4}}({#1},{#2})%
}
\NewDocumentCommand{\End}{O{-} o o}{%
  \operatorname{End}\IfValueT{#2}{_{#2}}\IfValueT{#3}{^{#3}}({#1})%
}
\NewDocumentCommand{\morphism}{s o m m o O{normal}}{%
  \begin{tikzcd}[ampersand replacement=\&, column sep=#6]
    \IfValueT{#2}{#2\colon}
    #3\IfBooleanTF{#1}{\IfValueTF{#5}{\rar[mapsto]{#5}}{\rar[mapsto]}}{\IfValueTF{#5}{\rar{#5}}{\rar}}\&
    #4
  \end{tikzcd}
}
\NewDocumentCommand{\functor}{s o m m o O{normal}}{%
  \begin{tikzcd}[ampersand replacement=\&, column sep=#6]
    \IfValueT{#2}{#2\colon}
    #3\IfBooleanTF{#1}{\IfValueTF{#5}{\rar[mapsto]{#5}}{\rar[mapsto]}}{\IfValueTF{#5}{\rar{#5}}{\rar}}\&
    #4
  \end{tikzcd}
}
\NewDocumentCommand{\SymGrp}{m}{\mathfrak{S}_{#1}}
\NewDocumentCommand{\set}{m o}{\{#1%
    \IfNoValueF{#2}{\,|\,#2}%
    \}%
}
\NewDocumentCommand{\Serre}{o}{\mathbb{S}\IfValueT{#1}{_{#1}}}
\DeclareDocumentCommand\Stops{O{} O{}}{
  \Lambda\def\@tempa{#2}\ifx\@tempa\@empty\else_{#2}\fi\def\@tempa{#1}\ifx\@tempa\@empty\else^{(#1)}\fi
}
\newcommand{\qquand}{\qquad\text{and}\qquad}
\newcommand{\coloneqq}{:=}
\NewDocumentCommand{\mychoose}{s m m}{\left(\begin{smallmatrix}\mathbf{#2}\\#3\end{smallmatrix}\right)\IfBooleanT{#1}{_0}}
\NewDocumentCommand{\multichoose}{s m m}{\left\{\begin{smallmatrix}\mathbf{#2}\\#3\end{smallmatrix}\right\}}
\NewDocumentCommand{\degchoose}{s m m}{\multichoose{n}{d}^\flat}
\DeclareMathOperator{\domdim}{dom.dim}
\DeclareMathOperator{\gldim}{gl.dim}
\NewDocumentCommand{\inv}{m}{\operatorname{inv}(#1)}
\newcommand{\St}{\mathbf{St}_\infty}
\newcommand{\op}{{\mathrm{op}}}
\newcommand{\ParacyclicCat}{\mathbf{\Lambda{}}}
\RenewDocumentCommand{\L}{s m}{L_{#2}\IfBooleanT{#1}{^\vee}}
\NewDocumentCommand{\PL}{m o}{\L{\d{#1}[#2]}}
\RenewDocumentCommand{\d}{m o}{{#1}\IfValueT{#2}{_{#2}}}
\newcommand{\n}{\mathbf{n}}
\newcommand{\NN}{\mathbb{N}}
\NewDocumentCommand{\rk}{m}{\operatorname{rk}(#1)}
\NewDocumentCommand{\interval}{m m}{[e,\pi^{#2 #1}_0]}
\author[T. Dyckerhoff]{Tobias Dyckerhoff}
\address[Dyckerhoff]{Fachbereich Mathematik\\
  Universit\"{a}t Hamburg\\
  Bundesstra{\ss}e 55\\
  D-20146 Hamburg, Germany}%
\email{tobias.dyckerhoff@uni-hamburg.de}%
\urladdr{https://www.math.uni-hamburg.de/home/dyckerhoff/}
\author[G. Jasso]{Gustavo Jasso}
\address[Jasso]{Mathematisches Institut\\
  Rheinische Friedrich-Wilhelms-Universit\"{a}t Bonn\\
  Endenicher Allee 60\\
  D-53115 Bonn, Germany}%
\email{gjasso@math.uni-bonn.de}%
\urladdr{http://gustavo.jasso.info}
\author[Y. Lekili]{Yank{\i} Lekili}
\address[Lekili]{Department of Mathematics\\
  King's College London\\
  Strand \\
  London WC2R 2LS, United Kingdom}%
\email{yanki.lekili@kcl.ac.uk}%
\urladdr{https://lekili.duckdns.org/}
\title[The symplectic geometry of higher Auslander algebras]{The symplectic
  geometry of higher Auslander algebras: \\Symmetric products of disks}
\subjclass[2010]{16G70, 19D99, 53D37}
\keywords{Auslander algebras; Auslander--Reiten theory; Fukaya categories;
  symmetric products; algebraic $K$-theory; Waldhausen $K$-theory; Koszul duality}
\begin{document}

\maketitle

\begin{abstract}
  We show that the perfect derived categories of Iyama's $d$-dimensional
  Auslander algebras of type $\AA$ are equivalent to the partially wrapped Fukaya
  categories of the $d$-fold symmetric product of the $2$-dimensional unit disk
  with finitely many stops on its boundary. Furthermore, we observe that Koszul
  duality provides an equivalence between the partially wrapped Fukaya
  categories associated to the $d$-fold symmetric product of the disk and those
  of its $(n-d)$-fold symmetric product; this observation leads to a symplectic
  proof of a theorem of Beckert concerning the derived Morita equivalence
  between the corresponding higher Auslander algebras of type $\AA$.

  As a byproduct of our results, we deduce that the partially wrapped Fukaya
  categories associated to the $d$-fold symmetric product of the disk organise
  into a paracyclic object equivalent to the $d$-dimensional Waldhausen
  $\wS$-construction, a simplicial space whose geometric realisation provides
  the $d$-fold delooping of the connective algebraic $K$-theory space of the
  ring of coefficients.
\end{abstract}

\vspace{4.5em}

\setcounter{tocdepth}{2}
\tableofcontents

\newpage

\section*{Introduction}

Let $n$ and $d$ be natural numbers and consider the poset
\[
	\multichoose{n}{d}=\set{I\in\NN^d}[1\leq{i_1}\leq{i_2}\leq\cdots\leq{i_d}\leq{n}]
\]
of $d$-element multi-subsets of $\mathbf{n}=\set{1,\dots,n}$, where $I \le J$ if
for each $1 \leq a \leq d$ the inequality $i_a \leq j_a$ is satisfied. Further,
introduce the subset $\degchoose{n}{d}\subseteq\multichoose{n}{d}$ consisting of
those $I\in\multichoose{n}{d}$ such that there exists an index $1 \leq a < d$
with $i_a = i_{a+1}$. Thus, the complement
$\mychoose{n}{d}=\multichoose{n}{d}\setminus\multichoose{n}{d}^\flat$ can be
identified with the set of $d$-element subsets of $\mathbf{n}$ equipped with the
natural product order.

Let $\kk$ be a field and define the finite-dimensional $\kk$-algebra
\[
	\Aus{n}{d} := (\bigoplus_{I\leq J} \kk f_{JI})\big/ \langle f_{KK}\,|\,
  K\in\degchoose{n}{d} \rangle
\]
equipped with the multiplication law
\[
	f_{KJ'}\cdot f_{JI}=%
  \begin{cases}%
    f_{KI} & \text{if }J=J',\\%
    0 & \text{otherwise.}%
  \end{cases}
\]
Equivalently, $\Aus{n}{d}$ is the quotient of the incidence $\kk$-algebra of the
poset $\multichoose{n}{d}$ by its two-sided ideal generated by the idempotents
$f_{KK}$, $K \in\degchoose{n}{d}$ (note that the $\kk$-algebra $\Aus{n}{d}$
vanishes if $n<d$ and is isomorphic to the base field $\kk$ if $n=d$). For
example, an $\Aus{n}{1}$-module corresponds to a $\kk$-vector-space-valued
representation
\[
	\begin{tikzcd}[column sep=small]
		V_1 \ar{r} & V_2 \ar{r} & \cdots \ar{r} & V_n
	\end{tikzcd}
\]
of the linearly oriented $A_n$-quiver, while an $\Aus{n}{2}$-module amounts to a
commutative diagram
\[
	\begin{tikzcd}[column sep=small,row sep=small]
		V_{11}\ar{r}&V_{12}\ar{r}\ar{d}&V_{13}\ar{r}\ar{d}&\cdots\ar{r}&V_{1n}\ar{d}\\%
		&V_{22}\ar{r}&V_{23}\ar{r}\ar{d}&\cdots\ar{r}&V_{2n}\ar{d}\\%
		&&V_{33}\ar{r}&\cdots\ar{r}&V_{3n}\ar{d}\\%
    &&&\ddots&\vdots\ar{d}\\%
    &&&&V_{nn}%
	\end{tikzcd}
\]
of $\kk$-vector spaces with vanishing diagonal terms $V_{aa}$, $1\leq a\leq n$.\\

Remarkably, the algebras $\Aus{n}{d}$ arise naturally in three \emph{a priori}
unrelated contexts within representation theory, algebraic $K$-theory, and
symplectic topology:

\begin{enumerate}[label=(\Alph*)]
\item \label{item:A} {\bf Higher Auslander--Reiten theory.} The classical
  Auslander correspondence~\cite{Aus71}, one of the cornerstones of the
  representation theory of finite-dimensional algebras, establishes a bijection
  \begin{center}
    $\left\{%
      \parbox{14em}{\centering%
        finite-dimensional $\kk$-algebras\\ of finite representation type}%
    \right\}%
    \quad%
    \longleftrightarrow%
    \quad%
    \left\{%
      \parbox{16em}{\centering%
        finite-dimensional $\kk$-algebras $\Gamma$ with\\
        $\gldim\Gamma\leq2\leq\domdim\Gamma$}%
    \right\}$
  \end{center}
  where both classes of algebras are considered up to Morita equivalence; we
  remind the reader that $\domdim\Gamma$, the dominant dimension in the sense of
  \cite{Tac64}, is the largest number $d$ such that, in a minimal injective
  coresolution
  \[
    \begin{tikzcd}[column sep=small]
      0\rar&\Gamma_\Gamma\rar&I^0\rar&
      I^1\rar&\cdots\rar&I^{d-1}\rar&I^d\rar&\cdots
    \end{tikzcd}
  \]
  of the regular representation $\Gamma_\Gamma$, the injective $\Gamma$-modules
  $I^0,I^1,\dots,I^{d-1}$ are also projective. The correspondence is realised by
  associating to a $\kk$-algebra $A$ of finite representation type its {\em
    Auslander algebra}
  \[
    \Gamma_A = \End[\oplus_{[M]} M][A],
  \]
  where the sum ranges over the isomorphism classes of indecomposable
  $A$-modules $M$. This correspondence relates the representation-theory of $A$
  to the homological properties of $\Gamma$.

  For example, the $\kk$-algebra $\Aus{n+1}{2}$ is the
  Auslander algebra corresponding to the $\kk$-algebra $A_{n,1}$. As shown
  in~\cite{Iya11}, for $n\geq d>2$ the algebra $\Aus{n}{d}$ rather satisfies
  the inequalities
  \[
    \gldim\Gamma\leq d \leq \domdim\Gamma
  \]
  Thus, $\Aus{n}{d}$ belongs to the class of \emph{$d$-dimensional Auslander
    algebras} introduced by Iyama in \cite{Iya07a} as central objects of study
  in a higher-dimensional version of Auslander--Reiten theory. From now on we
  will refer to the $\kk$-algebras $\Aus{n}{d}$ as the {\em higher Auslander
    algebras of Dynkin type $\AA$}. Due to their rich combinatorial structure,
  this family of algebras has garnered quite some attention in representation
  theory, see for example \cite{HI11a,IO11,OT12,HIO14,GI19,DJW19b} (where they
  are mostly referred to as the `$d$-representation finite algebras of type
  $\AA$') as well as the closely related \cite{IO13,JK19a}.

\item \label{item:B} {\bf Waldhausen $K$-theory.} By a construction of
  Waldhausen~\cite{BGT13,Wal85}, the sequence
  \[
    \morphism*{n}{\perf{\Aus{n}{1}}}
  \]
  of perfect derived categories of the $\kk$-algebras $\Aus{n}{1}$, $n\geq0$
  organises into a simplicial differential graded $\kk$-category %  For each $n$, we write
%   $\perf{\Aus{n}{1}}^\simeq$ for the subcategory where we only allow morphisms
%   which are invertible (i.e. quasi-isomorphisms). This gives a simplicial
%   groupoid. Taking the classifying space in each degree yields a simplicial
%   space
% \[ 
%    n \mapsto |N_{\mathrm{dg}}(\perf{\Aus{n}{1}})^\simeq| 
% \]
% whose topological realization, we simply denote by $|\perf{\Aus{1}{\bullet}}^{\simeq}|$.
  which provides a model for the Waldhausen $K$-theory~\cite{Wal85} of the field
  $\kk$. More precisely, the Waldhausen $K$-theory space
  \[
    K(\kk)=\Omega|N_{\mathrm{dg}}(\perf{\Aus{\bullet}{1}})^{\simeq}|
  \]
  of $\kk$ is defined to be the $1$-fold loop space of the geometric realisation
  of the simplicial $\infty$-groupoid
  $N_{\mathrm{dg}}(\perf{\Aus{\bullet}{1}})^{\simeq}$ (obtained by passing to
  the largest Kan complex level-wise), leading to the formula
  $K_i(\kk)\cong\pi_i(K(\kk))$ for the higher algebraic $K$-groups of $\kk$
  previously defined by Quillen~\cite{Qui73}.

  For $d > 1$, as discussed for abelian categories in the work of
  Poguntke~\cite{Pog17} and in \cite{Dyc17,DJW19b} in the stable context, the
  sequence
  \[
    \morphism*{n}{\perf{\Aus{n}{d}}}
  \]
  of perfect derived (DG-)categories also forms a simplicial category. The relation to Waldhausen $K$-theory is then
  given by the formula
  \[
    K(\kk)\simeq\Omega^d|N_{\mathrm{dg}}(\perf{\Aus{\bullet}{d}})^{\simeq}|,
  \]
  so that, for a fixed natural number $d$, the simplicial relations among the
  algebras $\Aus{n}{d}$ encode the $d$-fold delooping of the $K$-theory space
  $K(\kk)$.

\item\label{item:C}{\bf Wrapped Floer theory.} Let $\theDisk \subset \mathbb{C}$
  be the closed unit disk. For
  definiteness, fix the subset $\Stops[][n] \subset \partial \theDisk$ of
  $(n+1)$-st roots of unity. It is well known that there is a
  quasi-equivalence of $A_\infty$-categories
  \[
    \functor{\perf{\Aus{n}{1}}}{\W(\theDisk, \Stops[][n])}[\simeq]
  \]
  where $\W(\theDisk,\Stops[][n])$ denotes the \emph{partially wrapped} Fukaya
  category~\cite{Aur10a,HKK17} of $\theDisk$ with stops in $\Stops[][n]$. The
  main result of the present article, stated below, establishes a
  higher-dimensional version of the above equivalence, providing a symplectic
  interpretation of all higher Auslander of Dynkin type $\AA$. 
\end{enumerate}

\subsection*{Our results}

Let $\kk$ be a commutative ring. The following is the main theorem in this
article.
  
\begin{theorem*}[\Cref{thm:WF-Auslander}]
  \label{introthm:1}
  Let $\theDisk \subset \mathbb{C}$ be the closed unit disk and let $\Stops[][n] \subset \partial \theDisk$ be
  the subset of $(n+1)$-st roots of unity. Then, there is a quasi-equivalence of
  triangulated $A_\infty$-categories
  \[
    \functor{\perf{\Aus{n}{d}}}{\W(\Sym{d}{\theDisk},\Stops[d][n])}[\simeq]
  \]
  between the derived $A_\infty$-category of the $\kk$-algebra $\Aus{n}{d}$ and
  the $\ZZ$-graded partially wrapped Fukaya category of the $d$-th symmetric
  power $\Sym{d}{\theDisk}$ of $\theDisk$ with stops in
  \[
    \Stops[d][n]=\bigcup_{p\in\Stops[][n]}\set{p}\times\Sym{d-1}{\theDisk}.\qedhere
  \]
\end{theorem*}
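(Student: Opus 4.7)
My plan is to construct an explicit quasi-equivalence by exhibiting a classical generator on each side and matching their $A_\infty$-endomorphism algebras. On the algebraic side, the natural generator of $\perf{\Aus{n}{d}}$ is the direct sum $\bigoplus_{I} P_I$ of indecomposable projective modules $P_I = \Aus{n}{d}\,f_{II}$, indexed by $I \in \mychoose{n}{d}$, whose endomorphism algebra is $\Aus{n}{d}$ itself. On the symplectic side, I would fix $n$ pairwise disjoint embedded arcs $\ell_1,\dots,\ell_n$ in $\theDisk$, each joining two consecutive points of $\Stops[][n]$, and for every subset $I = \{i_1 < \cdots < i_d\} \in \mychoose{n}{d}$ form the product Lagrangian $L_I = \ell_{i_1} \times \cdots \times \ell_{i_d}$ in $\Sym{d}{\theDisk}$; disjointness of the arcs ensures these avoid the diagonal strata and descend to well-defined exact Lagrangian branes in the partially wrapped setup with stops $\Stops[d][n]$.

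The technical heart of the argument is the computation of the morphism spaces $\hom[L_I][L_J][\W(\Sym{d}{\theDisk},\Stops[d][n])]$, which I expect to be concentrated in degree zero and one-dimensional precisely when $I \le J$ in the product partial order on $\mychoose{n}{d}$, producing a distinguished class I would label $f_{JI}$. Since the $\ell_i$ are pairwise disjoint and the stops $\Stops[d][n]$ have the product structure $\bigcup_{p \in \Stops[][n]} \{p\} \times \Sym{d-1}{\theDisk}$, a suitable choice of split Hamiltonian and of compatible almost complex structure should yield a Künneth-type decomposition reducing the calculation to the known $d=1$ case, where each factor contributes a single wrapped chord exactly when $i_a \le j_a$. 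Composition should then be induced factor-wise by the triangle products on the disk, reproducing the incidence multiplication $f_{KJ'} \cdot f_{JI} = \delta_{J,J'}\,f_{KI}$; the relations $f_{KK} = 0$ for $K \in \degchoose{n}{d}$ reflect precisely that multi-sets with coincidences $i_a = i_{a+1}$ correspond to Lagrangians lying on the diagonal strata, which are excluded from the category of product branes.

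Having identified the endomorphism algebra of $\bigoplus_I L_I$ with $\Aus{n}{d}$, formality of this $A_\infty$-algebra is automatic since all morphism spaces sit in degree zero. It then remains to show that the collection $\{L_I\}_{I \in \mychoose{n}{d}}$ split-generates $\W(\Sym{d}{\theDisk},\Stops[d][n])$. For this I would use a Lefschetz-type generation criterion: either the natural symmetrisation map $\Sym{d}{\theDisk} \to \Sym{d}{\theDisk}$ or a Lefschetz fibration obtained from a well-chosen holomorphic function on $\Sym{d}{\theDisk}$ reduces generation to the one-dimensional statement that the $\ell_i$ generate $\W(\theDisk,\Stops[][n])$, which is classical.

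The principal obstacle is the Floer computation on the singular space $\Sym{d}{\theDisk}$: strictly speaking one must either pass to a smooth resolution (such as the Hilbert scheme of points on $\theDisk$) and transport the Fukaya-categorical data there, or carefully choose Hamiltonians and compatible almost complex structures that keep all relevant moduli spaces of pseudo-holomorphic discs disjoint from the diagonal divisor. Ruling out unexpected contributions from curves hitting the diagonal is what makes the naive Künneth reduction rigorous, and this is where the main technical work of the proof must be concentrated.
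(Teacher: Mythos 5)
Your outline founders on the central Floer computation, and it is precisely there that the actual content of the theorem lies. First, you have conflated the two dual generating collections of the paper: arcs ``joining two consecutive points of $\Stops[][n]$'' are the arcs $L_{i-1,i}$, and the endomorphism algebra of the corresponding products is (up to shifts) $\Aus{n}{n-d}$, respectively the Koszul dual $\Aus*{n}{d}$ --- not $\Aus{n}{d}$; the collection whose $d=1$ morphism spaces are ``one chord exactly when $i\leq j$'' is the fan $L_{01},\dots,L_{0n}$ of arcs sharing the boundary component $0$, and it is the products $\L{I}=\prod_a L_{0,i_a}$ of \emph{these} arcs that give $\Aus{n}{d}$. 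Second, and more seriously, even for the correct collection your asserted answer for the morphism spaces is false: it is not true that $\Hom[\L{I}][\L{J}][][*]$ is one-dimensional in degree $0$ whenever $I\leq J$. For instance, for $n=3$, $d=2$, $I=\set{1,2}\leq J=\set{2,3}$ one has $\Hom[\L{I}][\L{J}][][*]=0$, matching the monomial relation $f_{JI}=0$ in $\Aus{n}{d}$ whenever $j_a\geq i_{a+1}$ for some $a$; your reading of the relations as only ``$f_{KK}=0$ for degenerate multisets, i.e.\ exclusion of diagonal Lagrangians'' would instead produce the full incidence algebra of the poset $\mychoose{n}{d}$, which is not $\Aus{n}{d}$. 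The Künneth-with-identity-permutation reduction cannot detect this, because the arcs $L_{0,i}$ all interact along the common boundary component: by Auroux's computation the chain-level morphism space is the strands-algebra summand $\bigoplus_{\pi\in\interval{I}{J}}\hom[\L{I}][\L{J}]^\pi$, spread over degrees $-\inv{\pi}$, and neither the differential nor the product preserves this permutation decomposition. The needed statement --- that the cohomology is concentrated in degree $0$, equal to $\kk$ exactly when $\pi^{JI}_0=e$ and zero otherwise --- is the acyclicity of the complexes attached to Bruhat intervals (together with a characteristic-free treatment of signs and a careful normalisation of grading structures), and your proposal has no counterpart to this step; the remark that ``formality is automatic since all morphism spaces sit in degree zero'' assumes exactly what has to be proved.

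On the remaining points: reducing generation to Auroux's Lefschetz-fibration argument is fine in outline, since that is how \Cref{thm:Auroux} is proved, and the paper simply invokes it. But the ``principal obstacle'' you identify is not the real one: $\Sym{d}{\theDisk}$ is smooth (indeed $\Sym{d}{\mathbb{C}}\cong\mathbb{C}^d$ by the fundamental theorem of algebra), the diagonal is merely the branch locus of $\theDisk^{\times d}\to\Sym{d}{\theDisk}$, and no Hilbert-scheme resolution is needed; Perutz's Kähler forms already make product Lagrangians of disjoint arcs exact Lagrangians in the symmetric product. The genuine technical issues are instead the choice of perturbation scheme ensuring vanishing of higher $A_\infty$-products, the determination of signs in arbitrary characteristic (handled in the paper by a characteristic-free lift of the strands algebra and the balanced-signature analysis of Bruhat intervals), and the normalisation of grading structures so that the identity-permutation part sits in degree $0$ --- none of which your proposal addresses.
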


It is remarkable that, when working over a field, the global dimension of the
$\kk$-algebra $\Aus{n}{d}$, which is $d$ provided that $n>d$, is reflected in
the dimension of the symplectic manifold $\Sym{d}{\theDisk}$, which is a real
$2d$-dimensional ball.

The proof of the foregoing theorem relies on an explicit computation of the
derived endomorphism algebra $\dgAus{n}{d}$ of a specific set of generators of
the $A_\infty$-category $\W(\Sym{d}{\theDisk},\Stops[d][n])$. As it turns out,
$\dgAus{n}{d}$ is in fact a formal differential graded $\kk$-algebra whose
(degree $0$) cohomology is isomorphic to $\Aus{n}{d}$ as an ungraded
$\kk$-algebra (in the parlance of representation theory, we construct an
explicit tilting object in $\W(\Sym{d}{\theDisk},\Stops[d][n])$ whose
endomorphism algebra is isomorphic to $\Aus{n}{d}$). The differential graded
$\kk$-algebra $\dgAus{n}{d}$ was originally introduced in the context of
bordered Heegaard Floer homology \cite{LOT18} under the name `strands algebra
with $d$ strands and $n$ places'; its cohomology is computed in Section~4.1
in~\cite{LOT15}. However, in the aforementioned sources the authors work in
characteristic $2$. For this reason we follow an approach (hinted at in
\cite{LOT18,LOT15}) which relates the strands algebra to the Bruhat order on the
symmetric group on $d$ letters.

The connection between \ref{item:A}, \ref{item:B} and \ref{item:C} provided by
the family of higher Auslander algebras of type $\AA$ offers the opportunity to
use insights or techniques from one of these subjects to the benefit of another.
In this work we give a first illustration of the possibilities that arise from
the interaction between higher Auslander--Reiten theory \ref{item:A} and wrapped
Floer theory \ref{item:C}. Namely, we provide a symplectic proof of the derived
Morita equivalence between the $\kk$-algebras $\Aus{n}{d}$ and $\Aus{n}{n-d}$
obtained by Beckert in~\cite{Bec18} by means of a rather involved calculus of
derivators.
  
\begin{theorem*}[\Cref{thm:WF-Auslander:Koszul,thm:dgAus-sharp:AusShiftedKoszul}]
\label{thm:portmanteau}
Let $n\geq d\geq 1$.There is a commutative diagram of quasi-equivalences of
  triangulated $A_\infty$-categories
  \[
    \begin{tikzcd}[column sep=small]
      \perf{\Aus*{n}{d}}%
      \drar\ar[leftrightarrow]{rr}{\text{\tiny{Koszul Duality}}}%
      &&\perf{\Aus{n}{d}}\drar\dlar\\
      &\W(\Sym{d}{\theDisk},\Stops[d][n])&&\W(\Sym{n-d}{\theDisk},\Stops[n-d][n])\\
      &&\perf{\Aus{n}{n-d}}%
      \urar\ular\ar[leftrightarrow]{rr}[swap]{\text{\tiny{Koszul Duality}}}%
      &&\perf{\Aus*{n}{n-d}}\ular
    \end{tikzcd}
  \]
  where, by convention, $\W(\Sym{0}{\theDisk},\Stops[0][n])=\perf{\kk}$, and
  $\Aus*{n}{d}$ and $\Aus*{n}{n-d}$ denote the derived Koszul duals of the (augmented)
  $\kk$-algebras $\Aus{n}{d}$ and $\Aus{n}{n-d}$, respectively.
\end{theorem*}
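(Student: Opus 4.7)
The plan is to assemble the commutative diagram from three layers of equivalences. First, the two ``central'' equivalences
\[
  \perf{\Aus{n}{d}} \simeq \W(\Sym{d}{\theDisk},\Stops[d][n])
  \qquand
  \perf{\Aus{n}{n-d}} \simeq \W(\Sym{n-d}{\theDisk},\Stops[n-d][n])
\]
are immediate instances of the main theorem (\Cref{thm:WF-Auslander}), realised via the distinguished tilting objects whose derived endomorphism algebras are the formal strands DG algebras with cohomology $\Aus{n}{d}$ and $\Aus{n}{n-d}$, respectively.

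Second, I would establish the two ``outer'' equivalences $\perf{\Aus*{n}{d}} \simeq \W(\Sym{d}{\theDisk},\Stops[d][n])$ and $\perf{\Aus*{n}{n-d}} \simeq \W(\Sym{n-d}{\theDisk},\Stops[n-d][n])$ by identifying the derived Koszul dual $\dgAus*{n}{d}$ of the strands algebra with a shifted strands algebra $\dgAus{n}{n-d}$ of complementary dimension. This is the content of \Cref{thm:dgAus-sharp:AusShiftedKoszul}, which I would prove by constructing an explicit minimal Koszul bar-resolution indexed by the Bruhat order on $\SymGrp{n}$ and checking that the involution $I \mapsto \mathbf{n} \setminus I$ on $d$-subsets of $\mathbf{n}$ intertwines the two algebras up to a global degree shift. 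Geometrically, this reflects the symplectic duality exchanging the Lagrangian tuples indexed by a subset with those indexed by its complement.

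Third, once the four diagonal quasi-equivalences are in place, commutativity of the diagram is automatic: each Fukaya category receives two canonical equivalences from Auslander algebras of complementary flavour, and the horizontal Koszul duality arrows arise as the induced composites. The degenerate cases are handled by the convention $\W(\Sym{0}{\theDisk},\Stops[0][n]) = \perf{\kk}$.

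The main obstacle lies in Step 2: the explicit identification of $\dgAus*{n}{d}$ with the shifted $\dgAus{n}{n-d}$ requires care with degree shifts, sign conventions, and compatibility with the Bruhat combinatorics underlying the computation of $\dgAus{n}{d}$ performed in the body of the paper. In contrast, transporting the resulting Koszul duality from the algebraic to the symplectic side, once the algebraic statement is in hand, is largely formal.
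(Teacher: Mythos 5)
Your overall architecture matches the paper's: the two central equivalences come from \Cref{thm:WF-Auslander}, the outer ones from \Cref{thm:WF-Auslander:Koszul} (applied to $d$ and to $n-d$), and the diamond from \Cref{thm:dgAus-sharp:AusShiftedKoszul}. The genuine gap is in how you obtain the Koszul-duality arrows. You declare that once the diagonal equivalences exist, ``commutativity is automatic'' and the horizontal arrows ``arise as the induced composites''. Defining them as composites makes the diagram commute vacuously; the actual content is that these composites \emph{are} Koszul duality, i.e.\ that under $\perf{\Aus{n}{d}}\simeq\WF{n}{d}$ the direct sum of the simple (augmentation) modules corresponds to $\bigoplus_I\L*{I}$. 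You also beg this question when you refer to $\dgAus*{n}{d}$ --- which is \emph{defined} as the endomorphism algebra of the collection $\set{\L*{I}}$ --- as ``the derived Koszul dual of the strands algebra''. The paper's essential inputs here are the orthogonality computation $\hom[\L{I}][\L*{J}]\cong\kk$ for $I=J$ and $0$ otherwise, concentrated in degree $0$ (\Cref{lemma:Koszul:orthogonal}), the degree computation of \Cref{lemma:homIJ:rk}, properness and homological smoothness of $\Aus{n}{d}$ (\Cref{lemma:smooth_and_proper}), and Keller's theorem characterising the Koszul dual collection for smooth proper algebras. None of these appear in your plan, and without them the labels ``Koszul Duality'' on the horizontal arrows --- hence the commutativity of the two outer triangles --- are unjustified.

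Your proposed route to \Cref{thm:dgAus-sharp:AusShiftedKoszul} via ``an explicit minimal Koszul bar-resolution indexed by the Bruhat order'' is also misdirected. The Bruhat combinatorics governs $\dgAus{n}{d}$, the endomorphism algebra of $\set{\L{I}}$; the dual collection $\set{\L*{I}}$ has hom-complexes of rank at most one concentrated in the single degree $\rk{J}-\rk{I}$, so after the shearing shift $\L{I}^\sharp=\L*{I}[-\rk{I}]$ everything sits in degree $0$ and the identification with $\Aus{n}{n-d}$ is a direct comparison of multiplication laws under the complement bijection $I\mapsto\n\setminus I$ (\Cref{lemma:veers-coveers}); no bar resolution is needed. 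Moreover, a bar-resolution computation would only produce $\Aus*{n}{d}$ as an abstract differential graded algebra; it would not locate the dual generators inside $\WF{n}{d}$, which is what the diagram requires. (A minor point: the relevant symmetric group is $\SymGrp{d}$, not $\SymGrp{n}$.)
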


The proofs of both of the above theorems exploit results of Auroux~\cite{Aur10a}
which allow us calculate the cochain complexes of morphisms between objects in a
distinguished subcategory of $\W(\Sym{d}{\theDisk},\Stops[d][n])$ and to
determine explicit classical generators of this partially wrapped Fukaya
category. While the triangles of quasi-equivalences in
\Cref{thm:portmanteau} are direct consequences of derived Koszul duality for
homologically smooth and proper algebras, the diamond of quasi-equivalences in
the middle is due to an additional symmetry: First, note that the binomial
symmetry
\[ 	
	\textstyle{n \choose d} = {n \choose n-d}	
\]
can be concretely realised by the bijection
\[
  \begin{tikzcd}[column sep=large]
    \mychoose{n}{d}\rar[leftrightarrow]{\cong}&\mychoose{n}{n-d},
  \end{tikzcd}\qquad\morphism*{I}{\n\setminus I}
\]
obtained by associating to a subset $I \subseteq \mathbf{n}$ of cardinality $d$
its complement. This bijection further mediates a correspondence of objects
\[
  \begin{tikzcd}[row sep=tiny]
    \W(\Sym{d}{\theDisk},\Stops[d][n]) & \mychoose{n}{d} \ar{l}
    \ar[<->]{r}{\cong} &
		\mychoose{n}{n-d}	\ar{r} & \W(\Sym{n-d}{\theDisk},\Stops[n-d][n])\\
    \prod_{i \in I} L_{0,i} & \ar[|->]{l} I & J \ar[|->]{r} & \prod_{j \in J}
    L_{j-1,j}
  \end{tikzcd}
\]
where the Lagrangians $L_{ij}\subset\theDisk$ are the ones depicted in
\Cref{fig:Sigma5_Iyama,fig:Sigma5_Koszul}, respectively. The proof of
\Cref{thm:portmanteau} simply amounts to verifying that the above collections of
Lagrangians generate the corresponding partially wrapped Fukaya categories and,
for a suitable choice of grading structures on them, the derived endomorphism
algebras of their direct sums are quasi-isomorphic.

\begin{remark*}
  Let $\overline{\mathbb{H}}\subset\mathbb{C}$ be the closed upper half-plane
  and $\Stops_\ZZ\subset\partial\overline{\mathbb{H}}$ the set of integer
  numbers. The results in this article can be easily extended to prove that
  there is a quasi-equivalence of triangulated $A_\infty$-categories
  \[
    \functor{\Df{\Aus{\infty}{d}}}{\W(\Sym{d}{\overline{\mathbb{H}}},\Stops[d]_\ZZ),}[\simeq]
  \]
  where $\Aus{\infty}{d}$ is the (infinite-dimensional, locally unital)
  $\kk$-algebra defined in complete analogy to the higher Auslander algebras of
  type $\AA$, considering instead the poset
  \[
    \left\{\begin{smallmatrix} \ZZ\\d
      \end{smallmatrix}\right\}=\set{I\in\mathbb{Z}^d}[i_1\leq\cdots\leq i_d]
  \]
  of $d$-element multi-subsets of $\ZZ$. When working over a field, the
  $\kk$-algebra $\Aus{\infty}{d}$ was introduced originally in \cite{JK19a} as a
  higher-dimensional analogue of the mesh category of type $\ZZ A_\infty$ which
  appears prominently in classical Auslander--Reiten theory. Similarly, the
  partially wrapped Fukaya category
  $\W(\Sym{d}{S^1\times\mathbb{R}_{\geq0}},\Stops[d][n])$ associated to the
  $d$-fold symmetric product of a marked half-infinite cylinder is related to
  the higher-dimensional analogues of the tubes introduced in \emph{loc.~cit.}
\end{remark*}

\subsection*{Future work}

It is known \cite{Boc16, HKK17,LP17a} that the partially wrapped and the
infinitesimally wrapped Fukaya categories of a punctured surface are equivalent
to (suitable versions of) the derived categories of graded gentle algebras
associated to a polygonal decomposition of the surface dual to a ribbon graph
(Lagrangian skeleton) onto which the surface retracts. Conversely, the derived
category of every graded gentle algebra arises this way. Therefore, the Fukaya
categories of punctured Riemann surfaces provide a geometric way of studying the
combinatorial structure of derived categories of graded gentle
algebras. For example, in \cite{LP18a} new derived equivalences between gentle
algebras were discovered using this perspective, which subsequently led to a
classification of all (ungraded) gentle algebras up to derived equivalence
\cite{APS19,Opp19}.

An important feature of partially wrapped Fukaya categories of Weinstein
manifolds is that, as envisioned by Kontsevich \cite{Kon09} and further explored
in \cite{GPS18}, they are expected to satisfy cosheaf-like descent conditions.
Namely, there should be a constructible cosheaf of $A_\infty$-categories defined
on every Lagrangian skeleton of the Weinstein manifold whose $A_\infty$-category
of global sections is equivalent to the partially wrapped Fukaya category of the
Weinstein manifold. In the case of surfaces, this amounts to the fact that
derived categories of gentle algebras can be exhibited as a colimit of a
suitable diagram formed by perfect derived categories of $A_n$-quivers, which
are to be interpreted as the stalks of the corresponding (constructible) cosheaf
defined on a ribbon graph of the surface \cite{HKK17,DK18,Dyc17a}.

In subsequent work we exploit the interplay between \ref{item:B} and
\ref{item:C} to further advance this programme: The simplicial combinatorics
intrinsically present in \ref{item:B} provide a systematic approach to
local-to-global phenomena for the partially wrapped Fukaya categories of
symmetric powers of Riemann surfaces. As a result, we expect to compute these
partially wrapped Fukaya categories by gluing together perfect derived
categories of higher-dimensional Auslander algebras of type $\mathbb{A}$. The
present article thus focuses on the local aspects that enter into this picture.
For completeness, we mention that the case of \emph{punctured} disks is
investigated in detail in \cite{LP18} from the point of view of homological
mirror symmetry.

\subsection*{Conventions}

We fix an arbitrary commutative ring $\kk$. In what concerns
$A_\infty$-categories, we follow the notational conventions in Seidel's
book~\cite{Sei08}. Given objects $x$ and $y$ in an $A_\infty$-category $\A$, we
denote the cochain complex of morphisms from $x$ to $y$ by $\hom[x][y][\A]$ and
write $\hom[x][y]=\hom[x][y][\A]$ if the ambient $A_\infty$-category is clear
from the context. We also write
\[
  \Hom[x][y][\A][\ast]=H^*(\hom[x][y][\A])
\]
and $\Hom[x][y][\A]=\Hom[x][y][\A][0]$ to denote the graded $\kk$-module and the
plain $\kk$-module of morphisms from $x$ to $y$ in the graded $\kk$-category
$H^*(\A)$ and the ungraded $\kk$-category $H^0(\A)$, respectively. Finally, for
differential graded $\kk$-categories we follow the conventions in \cite{Kel06};
in particular, the graded Leibniz rule takes the form
\[
  \partial(\psi\circ\varphi)=\partial(\psi)\circ\varphi+(-1)^{|\psi|}\psi\circ\partial(\varphi)
\]
whenever $\psi$ is a homogeneous morphism (we compose morphisms from right to
left).

\subsection*{Acknowledgements}

T.~D. acknowledges the support by the VolkswagenStiftung for his Lichtenberg Professorship at the
University of Hamburg and is further supported by the Deutsche Forschungsgemeinschaft under
Germany‘s Excellence Strategy – EXC 2121 „Quantum Universe“ – 390833306. G.~J. was funded by the
Deutsche Forschungsgemeinschaft (German Research Foundation) under Germany’s Excellence Strategy -
GZ 2047/1, Projekt-ID 390685813. Y.~L. was partially funded by the Royal Society
URF$\setminus$R$\setminus$180024; he would like to thank D.~Nadler for motivating conversations
around this topic. 

\section{Preliminaries}

In this preliminary section we recall general aspects of the theory of Fukaya
categories which are relevant to our investigation of the partially wrapped
Fukaya categories associated to the symmetric products of the $2$-dimensional
unit disk.

\subsection{Fukaya categories of symmetric products of Riemann surfaces}

We begin with a brief overview of the different variants of the Fukaya category
associated to the symmetric products of an arbitrary Riemann surface.

\subsubsection{Fukaya categories and wrapped Fukaya categories}

Let $\Sigma$ be a compact Riemann surface and $d \geq 1$ an integer. The complex
structure $j$ on $\Sigma$ induces a complex structure $\Sym{d}{j}$ on the
$d$-fold symmetric product
\[
  \Sym{d}{\theSurface}\coloneqq\underbrace{\theSurface\times\cdots\times\theSurface}_{d\text{
      times}}/ \SymGrp{d}
\]
of $\theSurface$. Take $z \in \Sigma$ and let $\theDivisor$ be the image of the
embedding
\[
  \morphism{\Sym{d-1}{\theSurface}}{\Sym{d}{\theSurface},}\qquad
  \morphism*{\mathbf{x}}{z+\mathbf{x}.}
\]
Recall that there is an isomorphism $H^2(\Sym{d}{\theSurface})\cong\bigwedge^2
H^1(\theSurface) \oplus \mathbb{Z}$, where the direct summand $\mathbb{Z}$ is
represented by the Poincar{\'e} dual of $\theDivisor$. We are chiefly interested
in open symplectic manifolds of the form
\[
  M\coloneqq\Sym{d}{\theSurface}\setminus
  \theDivisor=\Sym{d}{\theSurface\setminus \set{z}}.
\]
The divisor $\theDivisor$ is ample, thus the complex manifold
$\Sym{d}{\theSurface}$ admits a symplectic form ${\omega=\frac{1}{2\pi}i F_A}$,
where $A$ is a connection on the line bundle $\mathcal{O}(k\theDivisor)$ for
some sufficiently large $k$. The symplectic form $\omega$ tames the complex
structure on $\Sym{d}{\theSurface}$ and lies in the cohomology class
Poincar{\'e} dual to $k\theDivisor$. Moreover, the restriction of $\omega$ to
$M$ gives an exact symplectic form which is convex at infinity.

Let $p\colon\theSurface^{\times d}\to\Sym{d}{\theSurface}$ be the branched
covering map, which is ramified along the big diagonal in
$\Sym{d}{\theSurface}$. Let $\omega_\theSurface$ be a
choice of a symplectic structure on $\theSurface$. In Corollary~7.2
in~\cite{Per08} Perutz constructs an exact K{\"a}hler form on $M$ of the above
type but with the additional property that
$\omega=p_*(\omega_\theSurface^{\times d})$ outside a small neighbourhood of the
big diagonal. This additional property allows us to consider exact Lagrangians
in $\Sym{d}{\theSurface}$ of the form $L_1 \times L_2\times\cdots\times L_d$,
where $\set{L_i}$ is a collection of pairwise disjoint exact Lagrangians on
$\Sigma$ (with respect to $\theta_\theSurface$).

\begin{remark}
  The fundamental theorem of algebra, responsible for the
  identification ${\Sym{d}{\mathbb{C}}\cong\mathbb{C}^d}$, implies that
  $\Sym{d}{\theDisk}$ is a ball of (real) dimension $2d$. Moreover, the
  symplectic structure on $\Sym{d}{\theDisk}$ is equivalent to the standard
  symplectic structure on the ball since both tame the standard complex
  structure. We prefer to use the symplectic structure constructed via the
  symmetric product construction in order to provide a diagrammatic description
  of the Lagrangians.
\end{remark}

For an exact symplectic manifold with convexity at infinity, such as $M$ above,
a rigorous construction of a Fukaya category of compact exact Lagrangians
$\F(M)$ appears in Seidel's book~\cite{Sei08}. By construction, $\F(M)$ is an
idempotent complete triangulated $A_\infty$-category which is linear over $\kk$.
There is also a rigorous construction of a larger $A_\infty$-category---due to
Abouzaid and Seidel~\cite{AS10}---that allows for non-compact exact Lagrangians
with controlled behaviour at infinity, namely the \emph{wrapped} Fukaya category
$\mathcal{W}(M)$ in which $\mathcal{F}(M)$ embeds as a full subcategory.

\subsubsection{Partially wrapped Fukaya categories}

A fruitful extension of the above considerations consists in equipping the
symplectic manifold $M$ with \emph{stops} and considering the resulting
\emph{partially wrapped} Fukaya categories. We review the technical setup of
this construction. Let $\theSurface$ be a surface with non-empty boundary
equipped with an exact area form $\omega_\theSurface$ and
$\Stops\subset\partial\theSurface$ a non-empty finite subset. Consider the
complex manifold $M=\Sym{d}{\theSurface}$ equipped with a symplectic form as
outlined above. Note that $M$ is a symplectic manifold with corners, considered
as a Liouville sector as in~\cite{GPS19}. We let
\[
  \Stops[d]= \bigcup_{p\in\Stops}\set{p}\times\Sym{d-1}{\theSurface}
\]
be a union of symplectic hypersurfaces in $M$ which we call \emph{stops}. To
these data Auroux~\cite{Aur10} associates a partially wrapped Fukaya category
$\W(M,\Stops[d])$; the general theory of partially wrapped Fukaya categories
has been further developed by Ganatra, Pardon and Shende~\cite{GPS18} and
Sylvan~\cite{Syl19}. Auroux's construction of partially wrapped Fukaya
categories was inspired by the bordered Heegaard Floer theory of Lipshitz,
Ozsv{\'a}th and Thurston \cite{LOT18}, as well as by the more familiar special
case of the Fukaya--Seidel category~\cite{Sei08} associated to a Lefschetz
fibration on an exact symplectic manifold.

\subsubsection{Stop-removal sequences}

The three flavours of Fukaya categories discussed above are related by a
commutative diagram of functors
\[
  \begin{tikzcd}[column sep=small]
    &\W(M,\Stops[d])\drar\\
    \mathcal{F}(M)\urar[hookrightarrow]\ar[hookrightarrow]{rr}&&\W(M)
  \end{tikzcd}
\]
where the functor $\F(M)\hookrightarrow\W(M,\Stops[d])$ is fully faithful, and
so is its composite with the functor $\W(M,\Stops[d])\to\W(M)$. Moreover, the
functor $\W(M,\Stops[d])\to\W(M)$ is the localisation at the union of the images
of the so-called \emph{Orlov functors}
\[
  \functor[\iota_p]{\W(\Sym{d-1}{\theSurface},\Stops_p^{(d-1)})}{\W(\Sym{d}{\theSurface},\Stops[d])}
\]
for $p\in\Stops$, where $\Stops_p=\Stops\setminus\set{p}$. In the above setting
of symmetric products of Riemann surfaces, Auroux~\cite{Aur10} proved that the
image of the Orlov functors is generated by the objects supported near the
stops (see also~\cite{Syl19} and~\cite{GPS18} for results in a much more general
setting and~\cite{Syl19a} for further applications of these functors). For a
fixed stop $p\in\Stops$, the Orlov functor $\iota_p$ is part of a Drinfeld--Verdier
localisation sequence
\begin{equation}
  \label{locseq}
  \begin{tikzcd}
    \W(\Sym{d-1}{\theSurface},\Stops_p^{(d-1)})\rar[hookrightarrow]&\W(\Sym{d}{\theSurface},\Stops[d])\rar[two
    heads]&\mathcal{W}(\Sym{d}{\theSurface}, \Stops[d][p])
  \end{tikzcd}
\end{equation}
which we refer to as the \emph{stop-removal sequence} at $p$.

\subsubsection{Grading structures}
\label{subsec:global_gradings}

In general, the (cochain complexes of morphisms in the) partially wrapped Fukaya
category $\W(\Sym{d}{\theSurface},\Stops[d])$ can only be $\ZZ/2$-graded. If the
first Chern class of $\Sym{d}{\theSurface}$ satisfies $2c_1=0$ (that is, if
$\theSurface$ has genus $0$ or if $d=1$), results in~\cite{Sei00} imply that
$\W(\Sym{d}{\theSurface},\Stops[d])$ admits a $\ZZ$-grading; the possible
$\ZZ$-gradings on $\W(\Sym{d}{\theSurface},\Stops[d])$ form a torsor over the
first cohomology group $H^1(\Sym{d}{\theSurface})\cong H^1(\theSurface)$. Since
in this article we only study the case when $\theSurface$ is a disk (which has
vanishing first cohomology), the partially wrapped Fukaya categories we consider
admit a unique $\ZZ$-grading.

\subsubsection{Generators of the partially wrapped Fukaya category}

The following result~\cite{Aur10a} describes sets of generators of the partially
wrapped Fukaya category $\W(\Sym{d}{\theSurface},\Stops[d])$ in terms of arcs in
$\theSurface\setminus\Stops[d]$.

\begin{theorem}[Auroux]
  \label{thm:Auroux}
  Let $\theSurface$ be a compact Riemann surface with non-empty boundary and
  $\Stops$ a finite set of points on its boundary. Let $L_1,\dots,L_n$ be a
  collection of disjoint properly embedded arcs in $\theSurface\setminus\Stops$
  with endpoints in the boundary of $\theSurface$. Assume that
  $\theSurface\setminus\set{L_1,\dots,L_n}$ is a disjoint union of disks, each
  of which contains at most one point of $\Stops$. Then, the partially wrapped
  Fukaya category $\W(\Sym{d}{\theSurface},\Stops[d])$ is generated, as an
  idempotent complete triangulated $A_\infty$-category, by the $n\choose d$
  Lagrangian submanifolds $L_I=\prod_{i\in I}L_i$, where $I$ ranges over the
  $d$-element subsets of $\set{1,\dots,n}$.
\end{theorem}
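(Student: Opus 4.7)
The plan is to proceed by induction using the stop-removal sequence \eqref{locseq}, at each stage choosing a stop $p\in\Stops$ and trading off between the fiber category of smaller symmetric-product degree and the quotient category with one fewer stop. The base case is $d=1$, where $\Sym{1}{\theSurface}=\theSurface$ and $\Stops[1]=\Stops$; there the statement reduces to the classical generation theorem for partially wrapped Fukaya categories of marked Riemann surfaces, which asserts that a family of pairwise disjoint arcs that cuts $\theSurface$ into disks each containing at most one stop generates $\W(\theSurface,\Stops)$, as established in~\cite{HKK17} and in Auroux's earlier work.

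For the inductive step, fix $d\geq 2$, pick $p\in\Stops$, and let $D$ denote the disk of the arc decomposition containing $p$. The inductive hypothesis applied to $\theSurface$ with the smaller stop set $\Stops_p=\Stops\setminus\set{p}$ and the same arcs $L_1,\dots,L_n$ (the admissibility condition is preserved) produces generators $L_J=\prod_{j\in J}L_j$ of the fiber $\W(\Sym{d-1}{\theSurface},\Stops_p^{(d-1)})$, indexed by $(d-1)$-element subsets $J$. I would then invoke Auroux's geometric identification of the image of the Orlov functor $\iota_p$ as the subcategory generated by Lagrangian products supported near $p$, which are concretely the products $L_i\cdot L_J$ with $L_i$ an arc on $\partial D$; these account for exactly the $L_I$ containing some index adjacent to $p$. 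For the remaining $L_I$, I pass to the quotient $\W(\Sym{d}{\theSurface},\Stops[d][p])$, in which $D$ has become stopless; subdividing $D$ by an auxiliary arc restores admissibility for the stop set $\Stops_p$, and iterating the argument for this refined configuration (now with one fewer stop) identifies the images of the remaining $L_I$ as generators of the quotient. The induction terminates at $d=1$ or once no stops remain.

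The main obstacle is the detailed geometric bookkeeping near the stop $p$: verifying that the image of $\iota_p$ is precisely generated by the products $L_i\cdot L_J$ with $L_i$ adjacent to $p$, and dually that the $L_I$ disjoint from $\partial D$ descend to generators of the quotient. Both rely on the explicit wrapping computations in neighbourhoods of stops developed in~\cite{Aur10a}, which match the Heegaard-diagrammatic combinatorics of symmetric products with the local symplectic model of partially wrapped Floer theory near a stop; this matching is the technical heart of the argument.
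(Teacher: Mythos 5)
Your route is genuinely different from the one the paper follows. The paper reviews Auroux's argument: realise $\Sym{d}{\theSurface}$ as the total space of a Lefschetz fibration induced by a branched double cover $\theSurface\to\mathbb{C}$, identify the products $L_I$ (after adding auxiliary stops) with the Lefschetz thimbles, invoke Seidel's theorem that thimbles generate the Fukaya--Seidel category, and then delete the auxiliary stops by localisation. You instead propose a double induction, on $d$ and on the number of stops, through the stop-removal sequence \eqref{locseq}; this is closer to the Ganatra--Pardon--Shende style of generation arguments. The formal skeleton is sound: a collection of objects of $\W(\Sym{d}{\theSurface},\Stops[d])$ that generates $\operatorname{im}(\iota_p)$ and whose image generates the quotient $\W(\Sym{d}{\theSurface},\Stops[d][p])$ does generate the middle term, even in the idempotent-complete sense, since a summand appearing in the quotient can be lifted at the cost of adding objects from $\operatorname{im}(\iota_p)$.

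Two steps, however, are left genuinely open. First, the identification of $\operatorname{im}(\iota_p)$ with the thick subcategory generated by the $L_I$ meeting $\partial D$ is asserted rather than proved, and this is exactly where the content lies. Your inductive hypothesis in $d$ only gives that $\operatorname{im}(\iota_p)$ is generated by the objects $\ell_p\times L_J$, where $\ell_p$ is the small linking arc of $p$; to place these in $\langle L_I\rangle$ you must resolve $\ell_p$ by the arcs bounding $D$ and multiply the resulting triangles by $L_J$ via \Cref{lemma:Auroux}\eqref{it:Auroux:triangle}, which is available only when $L_J$ is disjoint from, and non-isotopic to, the arcs occurring in the triangle. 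When $J$ already contains an index of an arc on $\partial D$ a separate argument using \Cref{lemma:Auroux}\eqref{it:Auroux:iso} is required (already for $n=d=2$ one has $\iota_{p_2}(L_{01})=L_{01}\times L_{12}\simeq L_{01}\times L_{02}$ by a quasi-isomorphism, not by a cone). Second, your induction on the number of stops bottoms out at the fully wrapped category $\W(\Sym{d}{\theSurface})$, and generation of that category by products of the $L_i$ is itself a nontrivial input: it is trivial for the disk, where the category vanishes, but not for a general $\theSurface$. A minor point: the auxiliary subdivision of $D$ after deleting $p$ is unnecessary, since the hypothesis only requires at most one stop per complementary disk, so the original arcs remain admissible for $\Stops_p$.
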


Let us review Auroux's proof of \Cref{thm:Auroux}. Fix $2g + 1$ points $p_0,
\ldots , p_{2g} \in \mathbb{C}$ and consider the compact surface
$\overline{\theSurface}$ given by a branched double cover of $\mathbb{C}P^1$
branched over the $p_i$'s and $\infty$. Let $\theSurface$ be the surface
obtained from $\overline{\theSurface}$ by puncturing it at the preimage of
$\infty$. The surface $\theSurface$ comes with a $2$-fold branched covering
$\pi\colon\theSurface \to \mathbb{C}$. Consider the map
\[
  \morphism[f]{\Sym{d}{\theSurface}}{\mathbb{C},}\qquad\morphism*{[z_1,\dots,z_d]}{\pi(z_1)+\cdots+\pi(z_d).}
\]
The map $f$ is a Lefschetz fibration whose critical points are tuples
$[q_{s_1},\dots,q_{s_d}]$ where $s = \set{s_1,\dots,s_d}$ is a $d$-element
subset of $\set{p_0,\dots , p_{2g}}$ and $\pi(q_{s_i}) = p_{s_i}$ . That is, the
critical points consist of $d$-tuples of distinct critical points of $\pi$. By
adding more stops if necessary, the generators specified in \Cref{thm:Auroux}
can be identified with thimbles of the Lefschetz fibration $f$ and, moreover,
the resulting partially wrapped Fukaya category is quasi-equivalent to the
Fukaya--Seidel category of $f$. A celebrated result of Seidel~\cite{Sei08} shows
that these thimbles generate the Fukaya--Seidel category of $f$. Removing the
additional stops, if there are any, corresponds to a localisation of the
Fukaya--Seidel category of $f$. Auroux leverages these considerations to prove
that the product Lagrangians listed in \Cref{thm:Auroux} are indeed sufficient
to generate the partially wrapped Fukaya category
$\W(\Sym{d}{\theSurface},\Stops[d])$ as an idempotent complete triangulated
$A_\infty$-category.

In the particular case of the pair $(\Sym{d}{\theDisk},\Stops[][n])$, Auroux's
Lefschetz fibration can be described concretely as follows: First, consider the
$(n+1)$-fold branched covering map
\[
  \morphism{\mathbb{C}}{\mathbb{C},}\qquad\morphism*{z}{z^{n+1}.}
\]
This is chosen so that the preimage of $1\in\mathbb{C}$ is the set $\Stops[][n]
= \set{p_0,p_1,\dots, p_n}$ of stops, which is contained in the boundary of the
unit disk. Therefore, the Fukaya--Seidel category of a morsification of this map
is quasi-equivalent to the partially wrapped Fukaya category
$\WF{n}{1}=\W(\theDisk,\Stops[][n])$. A concrete Morsification is given by
\[
  \morphism[f_{n,1}]{\mathbb{C}}{\mathbb{C},}\qquad\morphism*{z}{z^{n+1} -
    \varepsilon (n+1) z}
\]
for some $\varepsilon>0$, so that the $n$ distinct critical points of $f_{n,1}$
lie on the circle of radius $\sqrt[n]{\varepsilon}$. Consider now the Lefschetz
fibration
\[
  \morphism[f_{n,d}]{\Sym{d}{\mathbb{C}}}{\mathbb{C},}\qquad\morphism*{[z_1,\dots,z_d]}{\sum_{i=1}^df_{n,1}(z_i)=\sum_{i=1}^d
    z_i^{n+1} - \varepsilon (n+1) z_i.}
\]
Essentially by construction, the Fukaya--Seidel category of $f_{n,d}$ is
quasi-equivalent to the partially wrapped Fukaya category
$\WF{n}{d}=\W(\Sym{d}{\theDisk},\Stops[d][n])$. Under the identification
\[
  \morphism{\Sym{d}{\mathbb{C}}}{\mathbb{C}^d,}[\cong]\qquad\morphism*{[z_1,\dots,z_d]}{(e_1(z_1,\dots,z_d),\dots,e_d(z_1,\dots,z_d))}
\]
where $e_i$ is the $i$-th elementary symmetric polynomial (the sum of all
distinct products of $i$ variables), we can express the function $f_{n,d}$ (and
its Morsification) in terms of the coordinates of $\mathbb{C}^d$ using Newton's
identities. For example, for $n=d=2$ we obtain the map
\[
  \morphism{\mathbb{C}^2}{\mathbb{C},}\qquad\morphism*{(u,v)}{u^3-3uv-3\varepsilon{u}.}
\]

\subsubsection{Derived endomorphism algebras of generators}

Let $L_1,\dots,L_n$ be arcs in $\theSurface\setminus\Stops[][n]$ which satisfy
the assumptions in \Cref{thm:Auroux}, with the additional condition that each
disk in the decomposition $\theSurface\setminus(\cup L_i)$ contains
\emph{exactly} one stop. Suppose for a moment that the ground ring $\kk$ has
characteristic $2$, so that the results in \cite{Aur10} apply verbatim. In this
context, Auroux provides a differential $\ZZ/2$-graded model for the derived
endomorphism algebra $\mathcal{A}=\bigoplus_{I,J}\hom[L_I][L_J]$ of the
associated generator of the partially wrapped Fukaya category
$\W(\Sym{d}{\theSurface},\Stops[d])$. By picking a particularly nice
perturbation scheme adapted to this set-up, he shows that there are no
holomorphic $n$-gons for $n \geq 3$ with boundary on these generators and
therefore the higher products in the derived endomorphism algebra $\mathcal{A}$
vanish. Moreover, he determines all the holomorphic bigons and triangles that
contribute to the product and the differential. In the end, Auroux establishes a
quasi-isomorphism
\[
  \mathcal{A}=\bigoplus_{I,J}\hom[\L{I}][\L{J}]\simeq\bigoplus_{I,J}\bigoplus_{\pi\in\SymGrp{d}}\hom[\L{I}][\L{J}]^\pi,
\]
where
\[
  \hom[\L{I}][\L{J}]^\pi=\hom[\L{i_1}][\L{j_1}]\otimes\hom[\L{i_2}][\L{j_{\pi(2)}}]\otimes\cdots\otimes\hom[\L{i_d}][\L{j_{\pi(d)}}]
\]
and $\hom[L_i][L_j]$ is the free $\kk$-module generated by all Reeb chords
$L_i\to L_j$ induced by the Reeb flow along the various boundary components of
$\theSurface\setminus\Stops[][n]$. The differential $\ZZ/2$-graded $\kk$-algebra
on the right-hand side can be identified with the strands algebra arising in the
context of the bordered Heegaard--Floer homology of Lipshitz, Ozsv\'ath and
Thurston \cite{LOT18}. We emphasise that in general the above direct sum
decomposition of the strands algebra is not preserved neither by the
differential nor the multiplication operation, which is a source of
difficulties.

In arbitrary characteristic, Auroux's description is still relevant. Indeed, the
above counting of holomorphic polygons does not depend on the ground ring nor on
a possible choice of grading structure on $\Sym{d}{\theSurface}$. However, there are \emph{signs} involved in the differential and the
composition of morphisms, which arise from the orientations of the various
moduli spaces of holomorphic disks. In the case $\theSurface=\theDisk$, which is
the only concern of this article, we approach this problem of determining signs by providing an
explicit characteristic-free lift of the strands algebra from \cite{LOT18} by
means of a simple modification of a construction of Khovanov \cite{Kho14}, and showing that any other lift gives rise to an isomorphic algebra.

\subsection{The partially wrapped Fukaya categories $\WF{n}{d}$}

Let $\theDisk$ be the $2$-dimensional unit disk and
$\Stops[][n]=\set{p_0,p_1,\dots,p_n}$ a set of stops on its boundary, where
$n\geq0$. While from the point of view of symplectic topology the precise
position of the stops does not matter, for definiteness we often let
$\Stops[][n]$ be the set of $(n+1)$-st roots of unity. In this section we begin
our analysis of the partially wrapped Fukaya categories
\[
  \WF{n}{d}\coloneqq\W(\Sym{d}{\theDisk},\Stops[d][n]),\qquad d\geq1.
\]
Note that $\Sym{d}{\theDisk}$ has Liouville completion
$\Sym{d}{\mathbb{C}}\cong\mathbb{C}^n$, which is an exact symplectic manifold
with vanishing first Chern class. As explained in
\S\ref{subsec:global_gradings}, the partially wrapped Fukaya category
$\WF{n}{d}$ admits a $\ZZ$-grading and the vanishing of the first cohomology
group $H^1(\Sym{d}{\theDisk})\cong H^1(\theDisk)$ implies that this
$\ZZ$-grading is unique. We equip $\WF{n}{d}$ with this canonical $\ZZ$-grading
throughout the article. In $\Sym{d}{\theDisk}$, we will consider Lagrangians $L$
of the form $L = \prod_{i=1}^d L_i$ where $L_i$ are pairwise disjoint arcs in
$\theDisk \setminus \Stops[][n]$. These are all contractible, hence the choice
of a grading structure on such a Lagrangian is unique up to the natural action
of $\mathbb{Z}$ which corresponds to the shift functor in $\WF{n}{d}$. Finally,
if $n<d$ it can be shown---by applying the stop-removal sequence \eqref{locseq}
recursively and keeping in mind that the (fully) wrapped Fukaya category
$\mathcal{W}^{(d)}= \mathcal{W}(\mathbb{C}^n)$ is trivial---that the partially
wrapped Fukaya category $\WF{n}{d}$ is trivial. Thus, our interest mostly lies
in the case $n\geq d\geq1$.

\subsection{The partially wrapped Fukaya category $\WF{n}{1}$}

% We provide a convenient combinatorial description of the partially wrapped
% Fukaya categories $\WF{n}{1}$, $n\geq 0$.

\subsubsection{Combinatorial coordinates}

We assume that the orientation of $\theDisk$ is such that, if embedded in the
plane, its boundary is oriented counter-clockwise. For combinatorial reasons, we
assume that the cyclic order on the labels of the stops $p_i\in\Stops[][n]$
induced by the orientation of the disk's boundary agrees with the
\emph{opposite} of the natural cyclic order on the set $\set{0,1,\dots,n}$. We
denote by $i$ the boundary component of $\mathbb{D}\setminus\Stops[][n]$ which
lies between the points $p_i$ and $p_{i+1}$. In particular, if $n=0$ then there
is a single boundary component labelled $0$.

For $0\leq i<j\leq n$ we let $L_{ij}$ be a properly embedded arc in
$\theDisk\setminus\Stops[][n]$ whose endpoints lie on the components of
$\partial\mathbb{D}\setminus\Stops[][n]$ labelled $i$ and $j$. With some abuse
of notation, we identify these Lagrangians with objects of the partially wrapped
Fukaya category $\WF{n}{1}$, keeping in mind that one has to also choose grading
structures on them. Objects with the same underlying Lagrangians with
different grading structures are related by the shift functor. The morphisms
between disjoint Lagrangians are given by (composites of) Reeb chords
corresponding to the Reeb flow along the boundary of the disk which is the
rotational flow in the counterclockwise direction. More generally, there are
additional morphisms induced by intersection points.

The Lagrangians $L_{01},\dots,L_{0n}$ play a particularly important role in this
article. We choose isotopy classes of these Lagrangians so that they are
mutually disjoint and partition $\theDisk$ into a disjoint union of
(topological) disks, each of which contains exactly one stop in $\Stops[][n]$.
By \Cref{thm:Auroux}, these Lagrangians generate the partially wrapped
Fukaya category $\WF{n}{1}$ as an idempotent complete triangulated
$A_\infty$-category (with respect to arbitrarily chosen grading structures on
them). See \Cref{fig:Sigma5_Iyama} for an example in the case $n=5$.

\begin{figure}[!h]
  \includegraphics{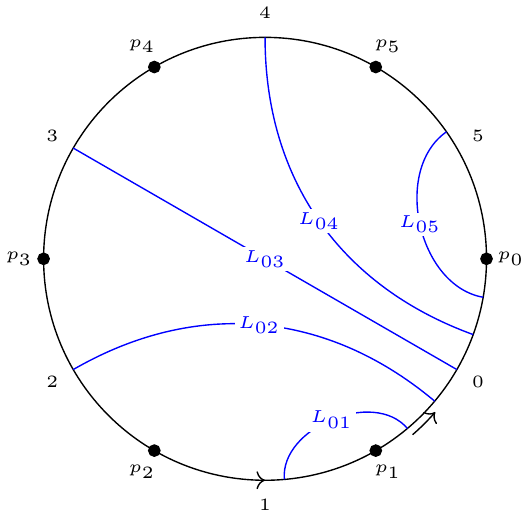}
  \caption{The generators $L_{01},\dots,L_{0n}$ of
    $\WF{n}{1}=\W(\theDisk,\Stops[][n])$ in the case $n=5$.}
  \label{fig:Sigma5_Iyama}
\end{figure}

\subsubsection{The quasi-equivalence $\WF{n}{1}\simeq\perf{\Aus{n}{1}}$}

Let $n\geq 1$. Recall that $\Aus{n}{1}$ denotes the path $\kk$-algebra of the
quiver
\[
  \begin{tikzcd}[column sep=small]
    1\rar&2\rar&\cdots\rar&n.
  \end{tikzcd}
\] 
We view $\Aus{n}{1}$ as an $A_\infty$-algebra concentrated in degree $0$ and
vanishing higher products. We claim that there is an \emph{isomorphism} of
$A_\infty$-algebras
\[
  \begin{tikzcd}
    \bigoplus_{i,j=1}^n\hom[L_{0i}][L_{0j}]\rar{\simeq}&\Aus{n}{1}
  \end{tikzcd}
\]
and, consequently, a quasi-isomorphism of triangulated $A_\infty$-categories
\[
  \functor{\perf{\Aus{n}{1}}}{\WF{n}{1}.}[\simeq]
\]
Indeed, since the Lagrangians $L_{01},\dots,L_{0n}$ do not intersect in the
interior of the disk, the only morphisms between them in $\WF{n}{1}$ are given
by Reeb chords corresponding to the Reeb flow along the boundary of the disk:
\[
  \begin{tikzcd}[column sep=small]
    L_{01}\rar&L_{02}\rar&\cdots\rar&L_{0n}.
  \end{tikzcd}
\]
Note that there are no relations between these morphisms. Moreover, since the
above quiver is a tree, we can choose grading structures on the above
Lagrangians to ensure that all of the morphisms above have degree $0$ (for
example, by choosing arbitrary grading structures and then shifting them as
necessary). Therefore, with respect to the aforementioned choice of grading
structures, the natural map
\[
  \morphism{\bigoplus_{i,j=1}^n\hom[L_{0i}][L_{0j}]}{\bigoplus_{i,j}^nH^0(\hom[L_{0i}][L_{0j}])}[\cong]
\]
is an isomorphism of $A_\infty$-algebras and, moreover, that there is an
isomorphism of (ungraded) $\kk$-algebras
\[
  \bigoplus_{i,j}^nH^0(\hom[L_{0i}][L_{0j}])\cong\Aus{n}{1};
\]
in particular, the $A_\infty$-algebra $\bigoplus_{i,j=1}^n\hom[L_{0i}][L_{0j}]$
is in fact an (ungraded) $\kk$-algebra. This proves the claim.

For later use we note that, with respect to appropriately chosen grading
structures on the Lagrangians $L_{0i}$, $1\leq i\leq n$, the above argument
establishes the existence of isomorphisms of \emph{graded} $\kk$-modules
\begin{equation}
  \label{eq:LiLj}
  \hom[L_{0i}][L_{0j}]\cong\begin{cases}
    \kk(0)&i\leq j;\\
    0&\text{otherwise}.
  \end{cases}
\end{equation}
In particular, the differential graded $\kk$-algebra
$\bigoplus_{i,j=1}^n\hom[L_{0,i}][L_{0,j}]$ is in fact an (ungraded)
$\kk$-algebra.

\subsubsection{The quasi-equivalence $\WF{n}{1}\simeq\perf{\Aus{n}{n-1}}$}

Following Section 3.3 in~\cite{HKK17} and
Section 2.1 in~\cite{LP18}, consider the Lagrangians $L_{i-1,i}$, $1\leq i\leq
n$ in $\theDisk\setminus\Stops[][n]$, embedded in such a way that they are
mutually disjoint and partition $\theDisk$ into a disjoint union of disks, each
of which containing exactly one stop in $\Stops[][n]$; for reference, the
situation is depicted in \Cref{fig:Sigma5_Koszul} in the case $n=5$.
\begin{figure}
  \includegraphics{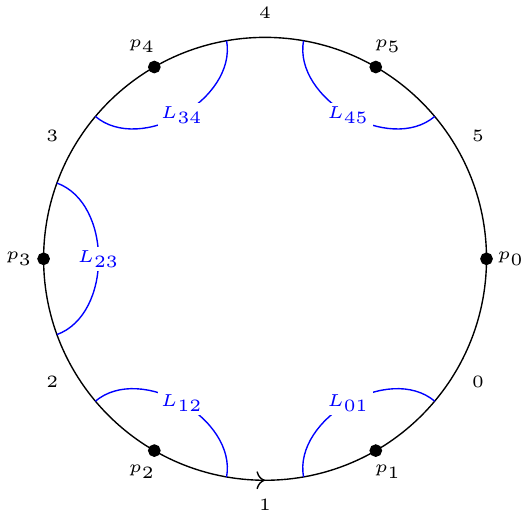}
  \caption{The generators $L_{01},L_{12},\dots,L_{n-1,n}$ of
    $\W(\theDisk,\Stops[][n])$ in the case $n=5$.}
  \label{fig:Sigma5_Koszul}
\end{figure}
By \Cref{thm:Auroux}, these Lagrangians generate the partially wrapped Fukaya
category $\WF{n}{1}=\W(\theDisk,\Stops[][n])$ as an idempotent complete
triangulated $A_\infty$-category. Moreover, since the Lagrangians $L_{i-1,i}$,
$1\leq i\leq n$, do not intersect in the interior of the disk, the only
morphisms in $\WF{n}{1}$ between them are given by Reeb chords corresponding to
the Reeb flow along the boundary of the disk:
\[
  \begin{tikzcd}[column sep=small]
    L_{n-1,n}\rar&\cdots\rar&L_{12}\rar&L_{01}.
  \end{tikzcd}
\]
Note that the composite of any two consecutive of these morphisms vanishes for
the corresponding Reeb chords cannot be composed. Also, since the above quiver
is a tree, we can choose grading structures on the Lagrangians $L_{i-1,i}$,
$1\leq i\leq n$, to ensure that all the morphisms between them have degree $0$.
The resulting $\kk$-algebra is isomorphic to the higher Auslander algebra
$\Aus{n-1}{n}$. This establishes the existence of the required
quasi-equivalences. For later use, we note that the above argument shows that there is an isomorphism of
graded $\kk$-modules
\begin{equation}
  \label{eq:hom_vee-alt}
  \hom[L_{j-1,j}][L_{i-1,i}]\cong\begin{cases}
    \kk(0)&0\leq j-i\leq1;\\
    0&\text{otherwise,}
  \end{cases}
\end{equation}
with respect to appropriate grading structures on the corresponding Lagrangians.
In particular, the differential graded $\kk$-algebra
$\bigoplus_{i,j=1}^n\hom[L_{j-1,j}][L_{i-1,i}]$ is in fact and (ungraded)
$\kk$-algebra.

\section{Fukaya categories of symmetric products of disks}

We fix natural numbers $n\geq d\geq 1$. We
recall from~\cite{OT12} an equivalent definition of Iyama's higher Auslander
algebras of type $\AA$ which is the most convenient for our present purposes. It
is straightforward to verify that this definition agrees with that given in the
introduction to this article.

\begin{definition*}
  The \emph{$d$-dimensional Auslander algebra of type $\AA_{n-d+1}$} is the
  (ungraded) $\kk$-algebra with underlying $\kk$-module
  \[
    \Aus{n}{d}\coloneqq%
    \bigoplus_{I\leq J}\kk\cdot f_{JI}/\langle f_{JI}\ |\ \exists\ a:j_a\geq i_{a+1}\rangle,
  \]
  where $I$ and $J$ range over the poset $\mychoose{n}{d}$, equipped with the
  apparent multiplication law
  \[
    f_{KJ'}\cdot f_{JI}=\begin{cases}
      f_{KI}&\text{if }J=J',\\
      0&\text{otherwise}.
    \end{cases}
  \]
  In particular, $\Aus{n}{d}$ is a monomial quotient of the incidence
  $\kk$-algebra of the poset $\mychoose{n}{d}$.
\end{definition*}

\begin{remark*}
  If $n=d$, then the corresponding higher Auslander algebra $\Aus{n}{n}$ is
  isomorphic to the ground commutative ring $\kk$ since the poset
  $\mychoose{n}{n}$ consists of a single element.
\end{remark*}

This section is devoted to the proofs of our main results, which explicitly determine the derived
endomorphism algebras
\[
  \dgAus{n}{d}\coloneqq\bigoplus_{I,J}\hom[\L{I}][\L{J}],\quad%
  \dgAus*{n}{d}\coloneqq\bigoplus_{J,I}\hom[\L*{J}][\L*{I}]\quad\text{and}\quad\dgAus{n}{d}^\sharp\coloneqq\bigoplus_{J,I}\hom[\L{J}^\sharp][\L{I}^\sharp]
\]
associated to three different sets of generators $\set{\L{I}}$, $\set{\L*{I}}$
and $\set{\L{J}^\sharp}$ of the partially wrapped Fukaya category $\WF{n}{d}$.
The collections $\set{\L*{I}}$ and $\set{\L{I}^\sharp}$
agree up to shift:
\[
  \L{I}^\sharp=\L*{I}[-\rk{I}]
\]
where $\rk{I}$ is the (normalised) rank of $I\in\mychoose{n}{d}$; in particular,
their underlying Lagrangians in $\Sym{d}{\theDisk\setminus\Stops[d][n]}$ are
equal. More precisely, we prove the following statements:
\begin{enumerate}
\item In \Cref{thm:WF-Auslander} we prove that there is quasi-isomorphisms of
  differential graded $\kk$-algebras
  \[
    \morphism{\dgAus{n}{d}}{\Aus{n}{d}}[\simeq]
  \]
  which results in a quasi-equivalence of triangulated $A_\infty$-categories
  \[
    \functor{\perf{\Aus{n}{d}}}{\WF{n}{d},}[\simeq]\qquad\functor*{\Aus{n}{d}}{\bigoplus
   \L{I}.}
\]
\item In \Cref{thm:WF-Auslander:Koszul,thm:dgAus-sharp:AusShiftedKoszul}
  respectively, we prove that there are \emph{isomorphisms} of differential
  graded $\kk$-algebras
  \[
    \dgAus*{n}{d}\cong\Aus*{n}{d}\qquand\dgAus{n}{d}^{\sharp}\cong\Aus{n}{n-d},
  \]
  where we remind the reader that the Koszul dual algebra $\Aus*{n}{d}$ is
  generated in cohomological degrees $0$ and $1$. These result in
  quasi-equivalences of triangulated $A_\infty$-categories
  \[
    \functor{\perf{\Aus*{n}{d}}}{\WF{n}{d},}[\simeq]\qquad\functor*{\Aus*{n}{d}}{\bigoplus
      \L*{I},}
  \]
  and
  \[
    \functor{\perf{\Aus{n}{n-d}}}{\WF{n}{d},}[\simeq]\qquad\functor*{\Aus{n}{n-d}}{\bigoplus
      \L{I}^\sharp,}
  \]
  respectively. Moreover, the collections $\set{\L{I}}$ and $\set{\L*{I}}$ are
  Koszul dual to one another and hence the associated quasi-equivalences
  $\perf{\Aus{n}{d}}\xrightarrow{\simeq}\WF{n}{d}$ and
  $\perf{\Aus*{n}{d}}\xrightarrow{\simeq}\WF{n}{d}$ are intertwined by Koszul
  duality.
\end{enumerate}

\subsection{The strands algebra $\dgBruhat{n}{d}$}

For the sake of completeness, we provide a lift of the `strands algebra with $d$
strands $n$ places' introduced in \cite{LOT18} which takes into account the fact
that we work in arbitrary characteristic. Our construction can be interpreted as
a decorated version of a construction of Khovanov \cite{Kho14}. As we explain
below, the strands algebra is a combinatorially-defined differential graded
$\kk$-algebra which we eventually prove to be quasi-isomorphic to the derived
endomorphism algebra $\dgAus{n}{d}$ of an appropriate set $\set{\L{I}}$ of
generators of the partially wrapped Fukaya category $\WF{n}{d}$.

\subsubsection{The Bruhat order on $\SymGrp{d}$}

As originally observed in \cite{LOT18}, the strands algebra is closely related
to the Bruhat order on the symmetric group $\SymGrp{d}$. We recall from
\cite{BB05} the relevant terminology. By a \emph{word} we mean a (finite) word
in the alphabet $\set{s_1,\dots,s_{d-1}}\subset\SymGrp{d}$ given by the simple
transpositions $s_a=(a+1,a)$ (note that we read words from right to left since
we want to relate these to the composition of morphisms in the Fukaya category).
A word $w'$ is a \emph{subword} of a word $w$ if $w'$ is obtained from $w$ by
deleting some of its letters; in particular, the empty word is a subword of
every word. A \emph{reduced expression} for a permutation $\pi\in\SymGrp{d}$ is
factorisation of $\pi$ as a word of minimal length; a word is \emph{reduced} if
it is a reduced expression for some permutation. The \emph{length} of a
permutation is the length of any of its reduced expressions. Equivalently, the
length of $\pi\in\SymGrp{d}$ is the number
\[
  \inv{\pi}=\#\set{1\leq a<b\leq d}[\pi(b)<\pi(a)]
\]
of \emph{inversions} in $\pi$, see Proposition~1.5.2 in \cite{BB05}. The
relation $\pi'\leq\pi$ holds in the Bruhat order on $\SymGrp{d}$ if and only if
there exists a reduced expression of $\pi$ which contains a reduced expression
of $\pi'$ as a subword, see Theorem~2.2.2 in \cite{BB05}. Equivalently,
$\pi'\leq\pi$ if and only if every reduced expression of $\pi$ contains a
reduced expression of $\pi'$ as a subword, see Corollary~2.2.3 in \cite{BB05}.
The function $\pi\mapsto\inv{\pi}$ endows the Bruhat order on $\SymGrp{d}$ with
the structure of a graded poset, see Theorem~2.2.6 in \cite{BB05}; the elements
of degree $1$ are precisely the simple transpositions. As is customary, we write
$\pi'\triangleleft\pi$ to denote a covering relation in the Bruhat order on
$\SymGrp{d}$; equivalently, we write $\pi'\triangleleft\pi$ if $\pi'<\pi$ and
$\inv{\pi}=\inv{\pi'}+1$.

The Bruhat order on $\SymGrp{d}$ can be visualised diagrammatically as follows:
A simple transposition can be depicted as a planar strand diagram; for
example, the simple transpositions $(21)$ and $(32)$ in $\SymGrp{d}$ correspond
to the diagram
\begin{center}
\begin{tikzpicture}[scale=0.5]
  \pgfmathsetmacro{\r}{0.1}
  \node[label=left:{$1$}] (01) at (0,1) {};%
  \node[label=left:{$2$}] (02) at (0,2) {};%
  \node[label=left:{$3$}] (03) at (0,3) {};%

  \node[label=right:{$1$}] (11) at (1,1) {};%
  \node[label=right:{$2$}] (12) at (1,2) {};%
  \node[label=right:{$3$}] (13) at (1,3) {};%

  \node at (4.5,2) {$\text{and}$};%

  \draw[fill=black] (01) circle (\r);%
  \draw[fill=black] (02) circle (\r);%
  \draw[fill=black] (03) circle (\r);%

  \draw[fill=black] (11) circle (\r);%
  \draw[fill=black] (12) circle (\r);%
  \draw[fill=black] (13) circle (\r);%

  \draw (01.center) -- (12.center);%
  \draw (02.center) -- (11.center);%
  \draw (03.center) -- (13.center);%
\end{tikzpicture}\qquad
\begin{tikzpicture}[scale=0.5]
  \pgfmathsetmacro{\r}{0.1}
  \node[label=left:{$1$}] (01) at (0,1) {};%
  \node[label=left:{$2$}] (02) at (0,2) {};%
  \node[label=left:{$3$}] (03) at (0,3) {};%

  \node[label=right:{$1$}] (11) at (1,1) {};%
  \node[label=right:{$2$}] (12) at (1,2) {};%
  \node[label=right:{$3$}] (13) at (1,3) {};%

  \draw[fill=black] (01) circle (\r);%
  \draw[fill=black] (02) circle (\r);%
  \draw[fill=black] (03) circle (\r);%

  \draw[fill=black] (11) circle (\r);%
  \draw[fill=black] (12) circle (\r);%
  \draw[fill=black] (13) circle (\r);%

  \draw (01.center) -- (11.center);%
  \draw (02.center) -- (13.center);%
  \draw (03.center) -- (12.center);%
\end{tikzpicture}
\end{center}
respectively. Each reduced word
can be visualised as a strand diagram (obtained by concatenating the strands associated to
simple transpositions) in which the crossings that appear are linearly
ordered; the fact that the word is reduced corresponds to the fact that no two
strands cross more than once. Deleting a letter from a word corresponds to
resolving a crossing; note, however, that resolving a crossing may result in a
strand diagram in which two strands cross more than once, see
\Cref{fig:strand_resolution_with_two_crossings} for an example.
\begin{figure}[!h]
  \begin{tikzpicture}[scale=0.5]
  \pgfmathsetmacro{\r}{0.1}
  \node[label=left:{$1$}] (01) at (0,1) {};%
  \node[label=left:{$2$}] (02) at (0,2) {};%
  \node[label=left:{$3$}] (03) at (0,3) {};%

  \node (11) at (1,1) {};%
  \node (12) at (1,2) {};%
  \node (13) at (1,3) {};%

  \draw[fill=black] (01) circle (\r);%
  \draw[fill=black] (02) circle (\r);%
  \draw[fill=black] (03) circle (\r);%

  \draw (01.center) -- (12.center);%
  \draw (02.center) -- (11.center);%
  \draw (03.center) -- (13.center);%

  \begin{scope}[xshift=1cm]
  \node (01) at (0,1) {};%
  \node (02) at (0,2) {};%
  \node (03) at (0,3) {};%

  \node (11) at (1,1) {};%
  \node (12) at (1,2) {};%
  \node (13) at (1,3) {};%

  \draw (01.center) -- (11.center);%
  \draw (02.center) -- (13.center);%
  \draw (03.center) -- (12.center);%
  \begin{scope}[xshift=1cm]
  \node (01) at (0,1) {};%
  \node (02) at (0,2) {};%
  \node (03) at (0,3) {};%

  \node[label=right:{$1$}] (11) at (1,1) {};%
  \node[label=right:{$2$}] (12) at (1,2) {};%
  \node[label=right:{$3$}] (13) at (1,3) {};%

  \node at (4,2) {$\xmapsto{\text{resolve}}$};%

  \draw[fill=black] (11) circle (\r);%
  \draw[fill=black] (12) circle (\r);%
  \draw[fill=black] (13) circle (\r);%

  \draw (01.center) -- (12.center);%
  \draw (02.center) -- (11.center);%
  \draw (03.center) -- (13.center);%
\end{scope}
\end{scope}
\end{tikzpicture}\quad
\begin{tikzpicture}[scale=0.5]
  \pgfmathsetmacro{\r}{0.1} \node[label=left:{$1$}] (01) at (0,1) {};%
  \node[label=left:{$2$}] (02) at (0,2) {};%
  \node[label=left:{$3$}] (03) at (0,3) {};%

  \node (11) at (1,1) {};%
  \node (12) at (1,2) {};%
  \node (13) at (1,3) {};%

  \draw[fill=black] (01) circle (\r);%
  \draw[fill=black] (02) circle (\r);%
  \draw[fill=black] (03) circle (\r);%

  \draw (01.center) -- (12.center);%
  \draw (02.center) -- (11.center);%
  \draw (03.center) -- (13.center);%

  \begin{scope}[xshift=1cm]
    \node (01) at (0,1) {};%
    \node (02) at (0,2) {};%
    \node (03) at (0,3) {};%

    \node (11) at (1,1) {};%
    \node (12) at (1,2) {};%
    \node (13) at (1,3) {};%

    \draw (01.center) -- (11.center);%
    \draw (02.center) -- (12.center);%
    \draw (03.center) -- (13.center);%
    \begin{scope}[xshift=1cm]
      \node (01) at (0,1) {};%
      \node (02) at (0,2) {};%
      \node (03) at (0,3) {};%

      \node[label=right:{$1$}] (11) at (1,1) {};%
      \node[label=right:{$2$}] (12) at (1,2) {};%
      \node[label=right:{$3$}] (13) at (1,3) {};%

      \draw[fill=black] (11) circle (\r);%
      \draw[fill=black] (12) circle (\r);%
      \draw[fill=black] (13) circle (\r);%

      \draw (01.center) -- (12.center);%
      \draw (02.center) -- (11.center);%
      \draw (03.center) -- (13.center);%
    \end{scope}
  \end{scope}

\end{tikzpicture}
\caption{Example of a resolution which results in a strand diagram in which two strands
  cross more than once. On the left, the reduced word $(21)(32)(21)$; on the
  right, the non-reduced word $(21)e(21)$.}
\label{fig:strand_resolution_with_two_crossings}
\end{figure}
In these terms, the covering relation $\pi'\triangleleft\pi$ holds in the Bruhat
order on $\SymGrp{d}$ if and only if, in any strand diagram representation of $\pi$ in
which no two strands cross more than once, we may resolve a single crossing
(which is in fact uniquely determined, see Corollary~1.4.4 in \cite{BB05}) to
obtain a strand diagram representation of $\pi$ with the same property.

The Bruhat order on $\SymGrp{d}$ is an interval: the minimum element is the
trivial permutation $e\in\SymGrp{d}$ and the maximum element is the permutation
$a\mapsto d+1-a$. Proposition~2.3.4 in \cite{BB05} shows that multiplication by
the longest element induces an anti-automorphism of the Bruhat order on
$\SymGrp{d}$; in particular, the duality principle applies in this context. More
generally, a \emph{Bruhat interval} is an interval
\[
  [\pi',\pi]=\set{\pi''\in\SymGrp{d}}[\pi'\leq\pi''\leq\pi]
\]
in the Bruhat order on $\SymGrp{d}$. It is a remarkable fact that, provided that
$\inv{\pi}-\inv{\pi'}\geq2$, there exists a regular cell
complex, uniquely determined up to cellular homeomorphism, whose poset of cells is isomorphic to $[\pi',\pi]$ (by convention,
$\pi'$ corresponds to an empty cell); moreover, this regular cell complex is
homeomorphic to a ball of dimension $\inv{\pi}-1$, see Theorem~2.7.12 in
\cite{BB05} for details.

Let $\pi\in\SymGrp{d}$ be a non-trivial permutation; following Section~2.7 in
\cite{BB05}, we associate a cochain complex to the Bruhat interval $[e,\pi]$ as
follows. A \emph{balanced signature} on the Hasse diagram of the Bruhat interval
$[e,\pi]$ is an assignment
$\pi'\triangleleft\pi\mapsto\varepsilon(\pi'\triangleleft\pi)$ of signs $\pm1$
to the edges of the Hasse diagram of $[e,\pi]$ such that for every square
\[
  \begin{tikzpicture}
    \node (a) at (90:2em) {$\pi_a$}; \node (b) at (180:2em) {$\pi_b$}; \node (c)
    at (0:2em) {$\pi_c$}; \node (d) at (270:2em) {$\pi_d$}; \draw
    (a)--(b)--(d)--(c)--(a);
  \end{tikzpicture}
\]
the equality
\[
  \varepsilon(\pi_b\triangleleft\pi_a)\varepsilon(\pi_d\triangleleft\pi_b)+\varepsilon(\pi_c\triangleleft\pi_a)\varepsilon(\pi_d\triangleleft\pi_c)=0
\]
is satisfied (every Bruhat interval of length $2$ is of this form,
see Lemma~2.7.3 in \cite{BB05}).
\begin{figure}
  \begin{tikzpicture}
    \node (e) at (0,0) {$e$};%
    \node (21) at (-1,1) {$(21)$};%
    \node (32) at (1,1) {$(32)$};%
    \node (321) at (-1,2.5) {$(321)$};%
    \node (231) at (1,2.5) {$(231)$};%
    \node (31) at (0,3.5) {$(31)$};%
    \draw (e)--node[near start,label=left:{$+$}] {}%
    (21)--node[label=left:{$+$}]{}%
    (321)--node[near start,label={$+$}]{}%
    (31)--node[near end,label={$-$}]{}%
    (231)--node[label=right:{$-$}]{}%
    (32)--node[near end,label=right:{$+$}]{}(e);%
    \draw (21)--node[near end,label={$+$}]{}(231);%
    \draw (32)--node[near end,label={$-$}]{}(321);%
  \end{tikzpicture}
  \caption{The Hasse diagram of Bruhat order on $\SymGrp{3}$ together with a
    choice of balanced signature.}
  \label{fig:Bruhat_S3}
\end{figure}
Given a balanced signature $\varepsilon$ on the Hasse quiver of a Bruhat
interval $[e,\pi]$, we define the free graded $\kk$-module
$C[e,\pi]=\kk\cdot[e,\pi]$, where $\pi\in[e,\pi]$ lies in degree $-\inv{\pi}$,
and endow it with the differential
\[
  \partial_\varepsilon(\pi')=\sum_{\pi''\triangleleft\pi'}\varepsilon(\pi''\triangleleft\pi')\cdot\pi''
\]
for $\pi'\in[e,\pi]$. It is straightforward to verify that
$\partial_\varepsilon^2=0$. In terms of strand diagrams, the above differential is given
by the signed sum of all the permutations obtained from $\pi'$ by resolving a
single crossing in any of its strand diagram representations, excluding resolutions which
result in a diagram in which two strands cross more than once. Note that we can
obtain a new balanced signature $\varepsilon'$ from $\varepsilon$ by choosing an
arbitrary permutation $\pi'\in[e,\pi]$ and multiplying by $-1$ the signs of all
the edges incident to $\pi'$ in the Hasse quiver of $[e,\pi]$. In terms of the
associated cochain complexes, this corresponds to the isomorphism
\[
  \morphism{(C[e,\pi],\partial_{\varepsilon})}{(C[e,\pi],\partial_{\varepsilon'})}[\cong],\qquad\morphism*{\pi'}{-\pi'}
\]
which fixes all the other generators of $C[e,\pi]$. In particular, the above
operation does not change the isomorphism type of the associated chain complex.

Suppose that $\inv{\pi}\geq 2$ and let $X=X[e,\pi]$ be the regular cell complex
whose poset of cells is isomorphic to the Bruhat interval $[e,\pi]$. An
arbitrary choice of orientation of the cells of $X$ allows us to identify
$C[e,\pi]$ with the cellular chain complex $C(X;\kk)$. In particular, the
cochain complex $C[e,\pi]$ (which computes the reduced cellular homology of the
topological ball $X$) is acyclic. The signs involved in the differential on
$C[e,\pi]$ which arise via this procedure yield a balanced signature on the
Hasse diagram of the Bruhat interval $[e,\pi]$. The above is essentially the
given proof of Corollary~2.7.14 in \cite{BB05}, where general Bruhat intervals
are considered. We need the following slight refinement of this result.

\begin{proposition}
  \label{prop:Bruhat_acyclic_intervals}
  Let $\pi\in\SymGrp{d}$ be a non-trivial permutation. The following statements
  hold:
  \begin{enumerate}
  \item There exists a balanced signature $\varepsilon$ on the Hasse quiver of
    the Bruhat interval $[e,\pi]$ such that the corresponding cochain complex
    $(C[e,\pi],\partial_\varepsilon)$ is acyclic.
  \item Let $\varepsilon$ and $\varepsilon'$ be balanced signatures on the Hasse
    quiver of the Bruhat interval $[e,\pi]$. Then, the corresponding chain
    complexes $(C[e,\pi],\partial_\varepsilon)$ and
    $(C[e,\pi],\partial_{\varepsilon'})$ are isomorphic.
  \end{enumerate}
  In particular, for any choice of balanced signature $\varepsilon$ on the Hasse
  quiver of the Bruhat interval $[e,\pi]$, the cochain complex
  $(C[e,\pi],\partial_{\varepsilon})$ is acyclic.
\end{proposition}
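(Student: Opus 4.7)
My plan is to derive both assertions from Theorem~2.7.12 of \cite{BB05}, which states that the closed interval $[e,\pi]$ is the augmented face poset of a regular CW decomposition $X=X[e,\pi]$ of a closed ball of dimension $\inv{\pi}-1$, with $e$ playing the role of the empty cell.

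For (1), I would fix arbitrary orientations on every positive-dimensional cell of $X$ and extract the incidence number $\varepsilon_o(\pi''\triangleleft\pi')\in\{\pm 1\}$ for each covering relation. Lemma~2.7.3 of \cite{BB05} guarantees that every length-$2$ Bruhat interval is a diamond; combined with the cellular identity $\partial^2=0$, this unpacks directly into the balanced-signature condition around each such diamond. Hence $\varepsilon_o$ is a balanced signature, and $(C[e,\pi],\partial_{\varepsilon_o})$ is a negatively-graded shift of the augmented cellular chain complex of $X$, which is acyclic since $X$ is contractible.

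For (2), given balanced signatures $\varepsilon$ and $\varepsilon'$, I would form
\[
  \eta(\pi''\triangleleft\pi') \coloneqq \varepsilon(\pi''\triangleleft\pi')\cdot\varepsilon'(\pi''\triangleleft\pi')\in\{\pm 1\}.
\]
Dividing the two balanced conditions around an arbitrary length-$2$ diamond yields
\[
  \eta(\pi_b\triangleleft\pi_a)\,\eta(\pi_d\triangleleft\pi_b) = \eta(\pi_c\triangleleft\pi_a)\,\eta(\pi_d\triangleleft\pi_c),
\]
which, translating $\{\pm 1\}\cong\ZZ/2$, says that $\eta$ is a $1$-cocycle with $\ZZ/2$-coefficients on the \emph{Hasse $2$-complex} $H[e,\pi]$ obtained from the Hasse graph of $[e,\pi]$ by attaching a $2$-cell to each diamond. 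A direct check, matching the sign-change isomorphism displayed in the paragraph preceding the proposition, shows that whenever $\eta$ is a coboundary $\eta=\delta f$ for some $f\colon[e,\pi]\to\{\pm 1\}$, the rescaling $\pi'\mapsto f(\pi')\cdot\pi'$ supplies an isomorphism $(C[e,\pi],\partial_\varepsilon)\xrightarrow{\cong}(C[e,\pi],\partial_{\varepsilon'})$.

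The main obstacle is therefore the cohomological vanishing $H^1(H[e,\pi];\ZZ/2)=0$. I would deduce this from the ball structure of $X$ by identifying $H[e,\pi]$ with (a deformation retract of) a subcomplex of the barycentric subdivision of $X$ carrying all of its $1$-skeleton, so that every loop in $H[e,\pi]$ bounds a disk in the contractible $X$ that can be combinatorially pushed back into the Hasse $2$-complex. An alternative, more elementary route is induction on $\inv{\pi}$: define $f$ by propagating along a spanning tree of the Hasse graph rooted at $e$, and invoke the balanced condition at each newly-appearing diamond to verify consistency of $f$ across the non-tree edges incident to each element. Either way, the concluding sentence of the proposition then follows trivially by combining (1) and (2).
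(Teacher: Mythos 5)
Your part (1) is fine and is essentially the paper's argument: the interval is the face poset of a regular CW ball (Theorem~2.7.12 of [BB05]), orienting the cells produces incidence numbers that form a balanced signature, and the resulting complex is the (reduced, regraded) cellular chain complex of a ball, hence acyclic; this is Corollary~2.7.14 of [BB05], which the paper simply cites. The problem is part (2). Your reformulation is correct as far as it goes: $\eta=\varepsilon\cdot\varepsilon'$ is a $\ZZ/2$-valued $1$-cocycle on the complex obtained from the Hasse graph of $[e,\pi]$ by attaching a $2$-cell along each length-two diamond, and if $\eta$ is a coboundary $\delta f$ then rescaling basis elements by $f$ gives the desired isomorphism. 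But the entire content of the statement is now the vanishing you call "the main obstacle'': that every such cocycle is a coboundary, equivalently that the mod-$2$ cycle space of the Hasse graph is spanned by the diamond boundaries. Neither of your proposed routes establishes this. In route (a), loops do bound disks in the contractible ball $X$, but no argument is given that such a disk can be "combinatorially pushed back'' so as to express the loop as a sum of diamonds; that step is exactly the classical uniqueness of incidence numbers for regular CW complexes, which would itself have to be proved or precisely cited. In route (b), defining $f$ on a spanning tree is automatic, but "verifying consistency across the non-tree edges'' is precisely the assertion that $\eta$ is a coboundary: invoking the balanced condition on a single diamond only handles non-tree edges whose fundamental cycle is one diamond, and in general you would again need that the fundamental cycle is a sum of diamonds, which is what you are trying to prove. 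Note also that this diamond-generation property fails for arbitrary graded posets, so some structural input specific to Bruhat intervals (or to face posets of regular CW complexes) is genuinely required.

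For comparison, the paper closes exactly this gap by a self-contained induction: it first treats the case where $\varepsilon$ and $\varepsilon'$ agree on $[e,\pi)$, using the dual of Corollary~2.2.8 in [BB05] (any two covering edges below the top element lie in a common square) together with a connectivity-of-squares argument to show that the signs on the edges incident to $\pi$ are determined, once the lower edges are fixed, up to a simultaneous flip; it then propagates this up the interval by induction on length, adjusting $\varepsilon'$ by the sign-change isomorphisms one rank at a time. To repair your proof you would need either to carry out such an argument or to give a precise reference for the uniqueness (up to re-orientation of cells) of incidence numbers on a regular CW complex and explain why balanced signatures on $[e,\pi]$ are exactly such systems of incidence numbers; as written, part (2) has a genuine gap at its crucial step.
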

\begin{proof}
  The first statement is an immediate consequence Corollary~2.7.14 in
  \cite{BB05} which establishes the result in the case of integer coefficients
  (the general case follows by extending the coefficients to $\kk$).

  To prove the second statement, we argue inductively as follows. Let
  $\varepsilon$ and $\varepsilon'$ be two different balanced signatures on the
  Hasse diagram of the Bruhat interval $[e,\pi]$. Recall that multiplying by
  $-1$ the signs associated to all the edges in the Hasse diagram which are
  incident to a given permutation does not change the isomorphism type of the
  corresponding cochain complex. We consider first the following special case:
  Suppose that $\varepsilon$ and $\varepsilon'$ agree on the half-open interval
  $[e,\pi)$. We claim that $\varepsilon'$ is obtained from $\varepsilon$ by
  multiplying by $-1$ all the signs associated to all the edges incident to
  $\pi$ in $[e,\pi]$. If the Bruhat interval $[e,\pi]$ has length $1$, that is
  if $\pi$ is a simple transposition, the claim is obvious. Suppose then that
  $\pi$ has length at least $2$. The dual of Corollary 2.2.8~in \cite{BB05}
  implies that every pair of distinct edges $\pi_a\triangleleft\pi$ and
  $\pi_b\triangleleft\pi$ is part of some square
  \[
    \begin{tikzpicture}
      \node (a) at (90:2em) {$\pi$}; \node (b) at (180:2em) {$\pi_a$}; \node (c)
      at (0:2em) {$\pi_b$}; \node (d) at (270:2em) {$\pi_{ab}$}; \draw
      (a)--(b)--(d)--(c)--(a);
    \end{tikzpicture}
  \]
  in the poset $[e,\pi]$. Clearly, once the signs at the edges
  $\pi_{ab}\triangleleft \pi_a$ and $\pi_{ab}\triangleleft \pi_b$ are fixed,
  there are exactly two balanced signatures on the above Hasse diagram and these
  are related to each other by multiplying by $-1$ the signs at the edges
  $\pi_a\triangleleft\pi$ and $\pi_b\triangleleft\pi$. Thus, it is enough to
  show that the value of a balanced signature on any such square determines the
  value on all other squares. If two squares share an edge incident to $\pi$,
  then it is clear that the restriction of the signature to either square
  determines the value on both squares, keeping in mind that the value at the
  bottom edges is already fixed. If this is not the case, then the two squares
  are arranged as follows (depicted by solid edges):
  \[
    \begin{tikzpicture}
      \node (pi) at (0,3) {$\pi$};%
      \node (a) at (-2,2) {$\pi_a$};%
      \node (b) at (-0.75,2) {$\pi_b$};%
      \node (c) at (0.75,2) {$\pi_c$};%
      \node (d) at (2,2) {$\pi_d$};%
      \node (ab) at (-2,1) {$\pi_{ab}$};%
      \node (bc) at (0,1) {$\pi_{bc}$};%
      \node (cd) at (2,1) {$\pi_{cd}$};%
      \draw (pi)--(a);%
      \draw (pi)--(b);%
      \draw (pi)--(c);%
      \draw (pi)--(d);%
      \draw (a)--(ab)--(b); \draw (c)--(cd)--(d);%
      \draw[dashed] (b)--(bc)--(c);%
    \end{tikzpicture}
  \]
  By the above observation regarding pairs of edges incident to $\pi$, there are
  (at least) further covering relations in $[e,\pi]$ as indicated by the dashed
  edges. It is now clear that a balanced signature on the above Hasse diagram is
  completely determined by its values on the edges which are not incident to
  $\pi$ together with its values at any square with maximum element $\pi$.

  We now return to the general case. Inductively, let $1<k\leq\inv{\pi}$ and
  suppose that $\varepsilon$ and $\varepsilon'$ are balanced signatures which
  agree on all edges in the Hasse diagram in $[e,\pi]$ between permutations of
  length at most $k-1$. Let $\pi'\in[e,\pi]$ be a permutation of length $k$ and
  consider the Bruhat interval $[e,\pi']$. The above argument shows that we can
  replace $\varepsilon'$ by a new balanced signature which agrees with
  $\varepsilon$ on $[e,\pi']$ without changing the isomorphism type of the
  associated cochain complex. We can then repeat the argument for all remaining
  permutations $\pi'\leq\pi$ of length $k$. By induction, the claim follows.
\end{proof}

\begin{remark}
  Let $\pi\in\SymGrp{d}$ be a non-trivial permutation. Combined with the proof
  of Corollary~2.7.4 in \cite{BB05}, the proof of
  \Cref{prop:Bruhat_acyclic_intervals} shows that all possible balanced
  signatures on the Hasse diagram of the Bruhat order $\SymGrp{d}$ are obtained
  by choosing arbitrary orientations for the cells of the regular cell complex
  with incidence poset $[e,\pi]$.
\end{remark}

\begin{example}
  The Bruhat order on $\SymGrp{3}$ is isomorphic to the poset of cells
  associated to the following cell decomposition of the disk:
  \[
    \begin{tikzpicture}
      \pgfmathsetmacro{\r}{1}%
      \node (21) at (180:\r+\r*0.5) {$(21)$};%
      \node (21) at (0:\r+\r*0.5) {$(32)$};%
      \node (321) at (90:\r+\r*0.33) {$(321)$};%
      \node (321) at (-90:\r+\r*0.33) {$(231)$};%
      \draw[fill=black] (180:\r) circle (0.05);%
      \draw[fill=black] (0:\r) circle (0.05);%
      \draw[pattern=north west lines] (0,0) circle (\r);%
      \node[fill=white,circle,inner sep=0] (321) at (0:0) {$(31)$};%
    \end{tikzpicture}
  \]
  With respect to the choice of balanced signature in \Cref{fig:Bruhat_S3}, the
  cochain complex $C[e,(31)]$ is given by
  \[
    \begin{tikzcd}[ampersand replacement=\&,column sep=large]
      0\rar\&
      \kk\cdot(31)\rar{\left(\begin{smallmatrix}1\\-1\end{smallmatrix}\right)}\&\kk\cdot(321)\oplus\kk\cdot(231)\rar\rar{\left(\begin{smallmatrix}1&1\\-1&-1\end{smallmatrix}\right)}\&\kk\cdot(21)\oplus\kk\cdot(32)\rar{\left(\begin{smallmatrix}1&1\end{smallmatrix}\right)}\&\kk\cdot
      e
    \end{tikzcd}
  \]
  The reader can easily verify that the above cochain complex is acyclic.
\end{example}

\subsubsection{The definition of the strands algebra $\dgBruhat{n}{d}$}

In \cite{Kho14}, Khovanov introduced a characteristic-free simplified variant of
the strands algebra of \cite{LOT18} which turns out to be contractible (in his
definition, the unit is a coboundary). We introduce a decorated version of
Khovanov's construction which provides a cohomologically-non-trivial lift of
the strands algebra of \cite{LOT18} to arbitrary characteristic. We begin with a
technical lemma.

\begin{lemma}
  \label{lemma:intervalIJ}
  Let $I,J\in\mychoose{n}{d}$. If non-empty, the subset
  \[
    \set{\pi\in\SymGrp{d}}[\forall1\leq a\leq d:i_a\leq j_{\pi(a)}]
  \]
  is a Bruhat interval of the form $\interval{I}{J}$, where $e\in\SymGrp{d}$ is
  the trivial permutation. In particular, the above subset is non-empty if and
  only if $I\leq J$.
\end{lemma}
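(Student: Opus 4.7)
The plan is to reformulate the defining condition, establish downward closure in the Bruhat order, and then exhibit an explicit Bruhat maximum.

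First, because $J$ is strictly increasing, the condition $i_a \le j_{\pi(a)}$ is equivalent to $\pi(a) \ge m(a)$, where $m(a) \coloneqq \min\set{b}[j_b \ge i_a]$ is a non-decreasing function (set to $d+1$ if no such $b$ exists). The ``in particular'' clause follows by a direct pigeonhole argument: the identity $e$ satisfies the condition iff $i_a \le j_a$ for all $a$, i.e., $I \le J$, and conversely if any $\pi$ satisfies the condition then, for each $a$, the sets $\set{b}[b \ge a]$ and $\pi^{-1}(\set{c}[c \le a])$ have sizes summing to $d+1$, so they share some element $b$, yielding the chain $i_a \le i_b \le j_{\pi(b)} \le j_a$.

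Next, I would verify that the subset is downward-closed under the Bruhat order. It suffices to consider covering relations $\pi' = \pi \cdot t_{kl} \triangleleft \pi$ with $k < l$, $\pi(k) > \pi(l)$, and $\inv{\pi'} = \inv{\pi} - 1$. The two new conditions $i_k \le j_{\pi(l)}$ and $i_l \le j_{\pi(k)}$ follow from $i_k \le i_l \le j_{\pi(l)}$ and $j_{\pi(l)} \le j_{\pi(k)}$, respectively.

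Finally, I would define $\pi_0^{JI}$ by a backward greedy procedure: for $a = d, d-1, \dots, 1$, set $\pi_0^{JI}(a)$ to be the smallest index not previously used that is at least $m(a)$. An inductive check using $I \le J$ shows that the procedure is well-defined and produces a permutation lying in the subset. Combined with the downward closure already established, the lemma then reduces to showing that every $\pi$ in the subset satisfies $\pi \le \pi_0^{JI}$ in the Bruhat order. I would establish this using the rank-condition characterization of Bruhat order, namely that $\pi \le \sigma$ iff for all $k, l$ one has $|\set{a \le k}[\pi(a) \le l]| \ge |\set{a \le k}[\sigma(a) \le l]|$; by construction, $\pi_0^{JI}$ places each small value as early as possible when positions are scanned backwards, so the required rank inequalities can be verified position by position. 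This last comparison is the main obstacle, as it requires careful bookkeeping. A perhaps more transparent alternative uses the strand-diagram picture, where $\pi_0^{JI}$ is the unique maximally-crossed diagram compatible with the monotonicity $i_a \le j_{\pi(a)}$, and every other $\pi$ in the subset is obtained by resolving some of its crossings.
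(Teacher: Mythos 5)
Your proposal is correct and follows essentially the same route as the paper: you establish downward closure of the set under Bruhat covering relations and then exhibit the maximum via the same greedy choice $\pi_0^{JI}(a)=\min\set{b\text{ unused}}[i_a\leq j_b]$ taken from $a=d$ downwards, which is exactly the paper's inductive construction. The one step you flag as the main obstacle---verifying that every element of the set is below $\pi_0^{JI}$ in the Bruhat order---is treated no more explicitly in the paper's own proof, and your proposed completion via the rank (dot) criterion, or equivalently by resolving crossings of the maximally crossed strand diagram, is a sound way to finish.
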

\begin{proof}
  Let $X=X_{JI}=\set{\pi\in\SymGrp{d}}[\forall a:i_a\leq j_{\pi(a)}]$. Suppose
  that $X$ is non-empty. We show first that $X$ is downwards-closed. Since the
  symmetric group is finite, it suffices to show that if $\pi'\triangleleft\pi$
  is a covering relation in the Bruhat order on $\SymGrp{d}$ and $\pi\in X$,
  then $\pi'\in X$. By Lemma~2.1.4 in \cite{BB05}, we can write $\pi(b,a)=\pi'$
  for some $a<b$ such that $\pi(b)<\pi(a)$ and there does not exist $a<c<b$ such
  that $\pi(b)<\pi(c)<\pi(a)$. In particular
  \[
    i_a<i_b\leq j_{\pi(b)}=j_{\pi'(a)}<j_{\pi(a)}=j_{\pi(b)}.
  \]
  Therefore $\pi'\in X$, as required. Finally, $X$ contains a maximum
  $\pi^{JI}_0$, constructed inductively as follows: If $d=1$ then $I=\set{i}$
  and $J=\set{j}$ for some $i\leq j$ and $\pi^{JI}_0=e$ is the trivial
  permutation. For $d>1$ we let $\pi^{JI}_0(d)=\min\set{a}[i_d\leq j_a]$ and
  define $I'=I\setminus\set{i_d}$ and $J'=J\setminus\set{j_{\pi^{JI}_0(d)}}$.
  Note that $I'\leq J'$. Inductively, we define $\pi^{JI}_0$ on
  $\set{1,\dots,d-1}$ to be the unique maximal element of the poset
  \[
    \set{\pi\in\SymGrp{d-1}}[\forall 1\leq a<d:i_a\leq j_{\pi(a)}],
  \]
  which is non-empty since it contains the trivial permutation.
\end{proof}

\begin{notation}
  Let $I,J\in\mychoose{n}{d}$. With some abuse of notation, we let
  $\interval{I}{J}\coloneqq\emptyset$ whenever $I\not\leq J$.
\end{notation}

Before giving the definition of the strands algebra we introduce a convenient
diagrammatic language for visualising its generators.

\begin{definition}
  Let $I,J\in\mychoose{n}{d}$ be such that $I\leq J$. We view $I$ and $J$ as
  subsets of the real interval $[0,n+1]$. A \emph{strand diagram from $I$ to
    $J$} consists of a collection $\varphi\colon I\to J$ of $d$ properly
  embedded arcs in the rectangle $[0,1]\times[0,n+1]$, called \emph{strands},
  subject to the following restrictions:
  \begin{enumerate}
  \item Each strand is the graph of a monotonically increasing function
    ${[0,1]\to[0,n+1]}$ whose value at time $0$ lies in $I$ and whose value at
    time $1$ lies in $J$.
  \item No two of strands intersect at their endpoints. 
  \item No two strands intersect more than once. Moreover, all intersections are
    transversal.
  \item At most two strands cross at any given time $t\in[0,1]$.
  \end{enumerate}
  We write $\varphi(i)=j$ to indicate that $\varphi$ contains a strand
  connecting $i\in\set{0}\times I$ with $j\in\set{1}\times J$.
\end{definition}

\begin{remark}
  Let $I,J\in\mychoose{n}{d}$ be such that $I\leq J$. More informally, a strand
  diagram from $I$ to $J$ consists of $d$ distinct curves in the rectangle
  $[0,1]\times[0,n+1]$ connecting exactly one point of $\set{0}\times I$ with
  exactly one point of $\set{1}\times J$. Moreover, strands are required to move
  strictly to the left and weakly upwards as time flows from $0$ to $1$.
  Finally, we require that the number of crossings between any two strands is
  minimal, and that the crossings are positioned in such a way that the
  projection onto the time coordinate induces a linear order on set of crossings
  in the strand diagram. When working in characteristic $2$, the last condition
  on the linear order of the crossings does not play any role in the definition
  of the strands algebra; we may then drop it and recover the strand diagrams
  from \cite{LOT18}.
\end{remark}

\begin{notation}
  Let $I,J\in\mychoose{n}{d}$ be such that $I\leq J$. A strand diagram
  $\varphi\colon I\to J$ defines a permutation $\pi_\varphi\in\interval{I}{J}$
  given by $a\mapsto b$ if $\varphi(i_a)=j_b$; the linear order on the crossings
  of $\varphi$ (which correspond to inversions in $\pi_\varphi$) encodes the
  datum of a reduced expression of $\pi_\varphi$. In particular,
  $\inv{\pi_\varphi}$ is precisely the number of crossings between strands in
  $\varphi$. Conversely, for every reduced expression of a permutation
  $\pi\in\interval{I}{J}$ there exists a (non-unique) strand diagram
  $\varphi=\varphi_\pi$ from $I$ to $J$ such that $\pi_\varphi=\pi$ and which
  encodes the given reduced expression of $\pi$.
\end{notation}

\begin{notation}
  Let $I,J\in\mychoose{n}{d}$ and $\varphi\colon I\to J$ a strand diagram. We
  write $\inv{\varphi}=\inv{\pi_\varphi}$ for the number of crossings in
  $\varphi$.
\end{notation}

\begin{example}
  Let $n=5$ and $d=3$. Let $I=\set{1,2,3}$ and $J=\set{3,4,5}$. Consider the
  strand diagrams
  \begin{center}
    \begin{tikzpicture}[scale=0.5]
      \pgfmathsetmacro{\r}{0.1} \node[label=left:{$1$}] (01) at (0,1) {};%
      \node[label=left:{$2$}] (02) at (0,2) {};%
      \node[label=left:{$3$}] (03) at (0,3) {};%
      \node[label=left:{$4$}] (04) at (0,4) {};%
      \node[label=left:{$5$}] (05) at (0,5) {};%

      \node (11) at (1,1) {};%
      \node (12) at (1,2) {};%
      \node (13) at (1,3) {};%
      \node (14) at (1,4) {};%
      \node (15) at (1,5) {};%

      \draw[fill=black] (01) circle (\r);%
      \draw[fill=black] (02) circle (\r);%
      \draw[fill=black] (03) circle (\r);%

      \draw (01.center) -- (12.center);%
      \draw (02.center) -- (12.center);%
      \draw (03.center) -- (13.center);%

      \begin{scope}[xshift=1cm]
        \node (01) at (0,1) {};%
        \node (02) at (0,2) {};%
        \node (03) at (0,3) {};%
        \node (04) at (0,4) {};%
        \node (05) at (0,5) {};%

        \node (11) at (1,1) {};%
        \node (12) at (1,2) {};%
        \node (13) at (1,3) {};%
        \node (14) at (1,4) {};%
        \node (15) at (1,5) {};%

        \draw (02.center) -- (12.center);%
        \draw (02.center) -- (13.center);%
        \draw (03.center) -- (13.center);%
        \begin{scope}[xshift=1cm]
          \node (01) at (0,1) {};%
          \node (02) at (0,2) {};%
          \node (03) at (0,3) {};%
          \node (04) at (0,4) {};%
          \node (05) at (0,5) {};%

          \node (11) at (1,1) {};%
          \node (12) at (1,2) {};%
          \node (13) at (1,3) {};%
          \node (14) at (1,4) {};%
          \node (15) at (1,5) {};%

          \draw (02.center) -- (13.center);%
          \draw (03.center) -- (13.center);%
          \draw (03.center) -- (14.center);%
          \begin{scope}[xshift=1cm]
            \node (01) at (0,1) {};%
            \node (02) at (0,2) {};%
            \node (03) at (0,3) {};%
            \node (04) at (0,4) {};%
            \node (05) at (0,5) {};%

            \node[label=right:{$1$}] (11) at (1,1) {};%
            \node[label=right:{$2$}] (12) at (1,2) {};%
            \node[label=right:{$3$}] (13) at (1,3) {};%
            \node[label=right:{$4$}] (14) at (1,4) {};%
            \node[label=right:{$5$}] (15) at (1,5) {};%

            \node at (4.5,3) {and};%

            \draw[fill=black] (13) circle (\r);%
            \draw[fill=black] (14) circle (\r);%
            \draw[fill=black] (15) circle (\r);%

            \draw (03.center) -- (13.center);%
            \draw (03.center) -- (14.center);%
            \draw (04.center) -- (15.center);%
          \end{scope}
        \end{scope}
      \end{scope}
    \end{tikzpicture}
    \qquad
    \begin{tikzpicture}[scale=0.5]
      \pgfmathsetmacro{\r}{0.1} \node[label=left:{$1$}] (01) at (0,1) {};%
      \node[label=left:{$2$}] (02) at (0,2) {};%
      \node[label=left:{$3$}] (03) at (0,3) {};%
      \node[label=left:{$4$}] (04) at (0,4) {};%
      \node[label=left:{$5$}] (05) at (0,5) {};%

      \node (11) at (1,1) {};%
      \node (12) at (1,2) {};%
      \node (13) at (1,3) {};%
      \node (14) at (1,4) {};%
      \node (15) at (1,5) {};%

      \draw[fill=black] (01) circle (\r);%
      \draw[fill=black] (02) circle (\r);%
      \draw[fill=black] (03) circle (\r);%

      \draw (01.center) -- (12.center);%
      \draw (02.center) -- (13.center);%
      \draw (03.center) -- (13.center);%
      \begin{scope}[xshift=1cm]
        \node (01) at (0,1) {};%
        \node (02) at (0,2) {};%
        \node (03) at (0,3) {};%
        \node (04) at (0,4) {};%
        \node (05) at (0,5) {};%

        \node (11) at (1,1) {};%
        \node (12) at (1,2) {};%
        \node (13) at (1,3) {};%
        \node (14) at (1,4) {};%
        \node (15) at (1,5) {};%

        \draw (03.center) -- (14.center);%
        \draw (02.center) -- (13.center);%
        \draw (03.center) -- (13.center);%
        \begin{scope}[xshift=1cm]
          \node (01) at (0,1) {};%
          \node (02) at (0,2) {};%
          \node (03) at (0,3) {};%
          \node (04) at (0,4) {};%
          \node (05) at (0,5) {};%

          \node (11) at (1,1) {};%
          \node (12) at (1,2) {};%
          \node (13) at (1,3) {};%
          \node (14) at (1,4) {};%
          \node (15) at (1,5) {};%

          \draw (04.center) -- (14.center);%
          \draw (03.center) -- (13.center);%
          \draw (03.center) -- (14.center);%
          \begin{scope}[xshift=1cm]
            \node (01) at (0,1) {};%
            \node (02) at (0,2) {};%
            \node (03) at (0,3) {};%
            \node (04) at (0,4) {};%
            \node (05) at (0,5) {};%

            \node[label=right:{$1$}] (11) at (1,1) {};%
            \node[label=right:{$2$}] (12) at (1,2) {};%
            \node[label=right:{$3$}] (13) at (1,3) {};%
            \node[label=right:{$4$}] (14) at (1,4) {};%
            \node[label=right:{$5$}] (15) at (1,5) {};%

            \draw[fill=black] (13) circle (\r);%
            \draw[fill=black] (14) circle (\r);%
            \draw[fill=black] (15) circle (\r);%

            \draw (03.center) -- (13.center);%
            \draw (04.center) -- (14.center);%
            \draw (04.center) -- (15.center);%
          \end{scope}
        \end{scope}
      \end{scope}
    \end{tikzpicture}
  \end{center}
  whose associated permutation is $(31)$. From left to right, these strand
  diagrams encode the reduced expressions $(21)(32)(21)$ and $(32)(21)(32)$,
  respectively.
\end{example}

\begin{definition}
  Let $I,J\in\mychoose{n}{d}$ such that $I\leq J$. Two strand diagrams
  $\varphi,\psi\colon I\to J$ are \emph{equivalent} if $\pi_\varphi=\pi_\psi$
  and the reduced expressions of this permutation induced by $\varphi$ and
  $\psi$ are equal.
\end{definition}

\begin{remark}
  The equivalence relation between strands diagrams can be phrased in terms of a
  natural notion of isotopy. Namely, two strand diagrams are equivalent if they
  are isotopic via an isotopy through strand diagrams in which the linear order
  induced on the crossings by their time coordinates remains unchanged.
\end{remark}

\begin{remark}
  Let $I,J\in\mychoose{n}{d}$. By construction, equivalence classes of strand
  diagrams $I\to J$ are in bijection with reduced words in $\interval{I}{J}$.
\end{remark}

\begin{notation}
  Let $I,J\in\mychoose{n}{d}$ be such that $I\leq J$. We denote the equivalence
  class of strand diagrams $\varphi\colon I\to J$ such that $\pi_\varphi=e$ by
  $e_{JI}$.
\end{notation}

We define a differential graded $\kk$-category with set of objects
$\mychoose{n}{d}$ as follows:
\begin{itemize}
\item For $I,J\in\mychoose{n}{d}$ we let $\hom[I][J]$ be the $\kk$-module
  generated by equivalence classes of strand diagrams from $I$ to $J$, subject
  to the following relation: If $\varphi,\psi\colon I\to J$ are strand diagrams
  related by moving a single crossing pass another, then $\psi=-\varphi$.
\item Let $\varphi\colon I\to J$ and $\psi\colon J\to K$ be strand diagrams. We
  define
  \[
    \psi\circ\varphi=\begin{cases}
      \psi\varphi&\text{if }\inv{\pi_\psi\pi_\varphi}=\inv{\pi_\psi}+\inv{\pi_\varphi},\\
      0&\text{otherwise}.
    \end{cases}
  \]
  where the strand diagram $\psi\varphi$ is obtained by horizontal concatenation
  with the implicit re-parametrisation of the time coordinate (recall that
  strands flow from left to right). More informally, the natural composition law
  has the following important caveat: If after concatenating two strand diagrams
  the resulting strand diagram contains two strands which cross more than once,
  their composite vanishes.
\item We endow $\hom[I][J]$ with a grading by declaring the degree $|\varphi|$
  of a strand diagram $\varphi$ to be $-\inv{\varphi}=-\inv{\pi_\varphi}$, that
  is minus its number of crossings. Since the composition operation is by design
  compatible with the grading, this yields a graded $\kk$-category.
\item Finally, we endow the graded $\kk$-module $\hom[I][J]$ with the
  differential
  \[
    \partial(\varphi)=\sum_{\psi\triangleleft
      \varphi}\varepsilon(\psi\triangleleft\varphi)\psi
  \]
  where the sum ranges over all strand diagrams $\psi$ obtained from $\varphi$
  by resolving a single crossing, excluding resolutions which result in a strand
  diagram in which two strands cross more than once, and
  $\varepsilon(\psi\triangleleft\varphi)$ is given by $-1$ to the number of
  crossings in $\varphi$ which happen \emph{after} the crossing being resolved.
\end{itemize}

\begin{defprop}
  The \emph{strands algebra with $d$ strands and $n$ places} is the differential
  graded $\kk$-algebra
  \[
    \dgBruhat{n}{d}\coloneqq\bigoplus_{I,J}\hom[I][J].\qedhere
  \]
\end{defprop}
\begin{proof}
  It is straightforward to verify the differential on $\hom[I][J]$ squares to
  $0$. To prove that the differential satisfies the graded Leibniz rule we may
  proceed exactly as in the proof of Lemma~3.1 in \cite{LOT18}, with a small
  modification. We include the argument for the sake of completeness. We define
  a larger graded $\kk$-algebra $\widetilde{\mathcal{B}}_{n,d}$ where we allow all
  generalised strand diagrams obtained by concatenating strand diagrams in which
  no two strands cross more than once. The degree of a generalised strand
  diagram is defined to be minus the number of crossings in the diagram (which
  may be larger than the number of inversions in the corresponding permutation).
  The product is given by horizontal concatenation of generalised strand
  diagrams (with no caveats) and the differential is defined exactly as for the
  strands algebra $\dgBruhat{n}{d}$. Clearly, as a graded $\kk$-algebra,
  $\dgBruhat{n}{d}$ is a quotient of $\widetilde{\mathcal{B}}_{n,d}$. We verify
  that the differential on $\widetilde{\mathcal{B}}_{n,d}$ satisfies the graded
  Leibniz rule. Let $\varphi\colon I\to J$ and $\psi\colon J\to K$ be
  generalised strand diagrams. We analyse the expression
  \[
    \partial(\psi)\varphi+(-1)^{|\psi|}\psi\partial(\varphi).
  \]
  The term $\partial(\psi)\varphi$ corresponds precisely to the summand of
  $\partial(\psi\varphi)$ obtain be resolving crossings in $\psi\varphi$ which
  are crossings in $\psi$. Similarly, the remaining term
  $(-1)^{|\psi|}\psi\partial(\varphi)$ corresponds precisely to the summand of
  $\partial(\psi\varphi)$ obtain be resolving crossings in $\psi\varphi$ which
  are crossings in $\psi$; the sign $(-1)^{|\psi|}$ appears since
  $\partial(\varphi)$ does not take into account the number $-|\psi|$ of
  crossings in $\psi$, and all of these crossings happen after every crossing in
  $\varphi$. This shows that $\widetilde{\mathcal{B}}_{n,d}$ is a differential
  graded $\kk$-algebra. To finish the proof, it remains to show that the graded
  submodule of $\widetilde{\mathcal{B}}_{n,d}$ generated by the generalised strand
  diagrams with at least one double crossing is in fact a differential graded
  ideal. It is clear that this graded $\kk$-submodule is a graded ideal. Also,
  if a strand diagram $\varphi$ has at least two double crossings then
  $\partial(\varphi)$ has at least one double crossing and therefore lies in the
  ideal. Finally, if $\varphi$ has a single double crossing then there are
  exactly two strand diagrams $\psi$ and $\psi'$ which appear as summands of
  $\partial(\varphi)$ which do not lie in the ideal; we claim that these terms
  cancel each other. Indeed, since the permutations $\pi_{\psi}$ and
  $\pi_{\psi'}$ clearly agree, $\psi$ and $\psi'$ differ from each other by a
  sign as generators of $\widetilde{\mathcal{B}}_{n,d}$; this sign is given by
  $-1$ to the number of crossings that appear strictly between the two crossings
  which are part of the unique double crossing in $\varphi$. The claim follows.
\end{proof}

\begin{example}
  Let $n=5$ and $d=4$. Let $\set{1,2,3,4}$ and $J=\set{2,3,4,5}$. As generators
  of the strands algebra $\dgBruhat{n}{d}$, the strand diagrams
  \begin{center}
    \begin{tikzpicture}[scale=0.5]
      \pgfmathsetmacro{\r}{0.1}%
      \node[label=left:{$1$}] (01) at (0,1) {};%
      \node[label=left:{$2$}] (02) at (0,2) {};%
      \node[label=left:{$3$}] (03) at (0,3) {};%
      \node[label=left:{$4$}] (04) at (0,4) {};%
      \node[label=left:{$5$}] (05) at (0,5) {};%

      \node (11) at (1,1) {};%
      \node (12) at (1,2) {};%
      \node (13) at (1,3) {};%
      \node (14) at (1,4) {};%
      \node (15) at (1,5) {};%

      \draw[fill=black] (01) circle (\r);%
      \draw[fill=black] (02) circle (\r);%
      \draw[fill=black] (03) circle (\r);%
      \draw[fill=black] (04) circle (\r);%

      \draw (01.center) -- (12.center);%
      \draw (02.center) -- (12.center);%
      \draw (03.center) -- (13.center);%
      \draw (04.center) -- (14.center);%

      \begin{scope}[xshift=1cm]
        \node (01) at (0,1) {};%
        \node (02) at (0,2) {};%
        \node (03) at (0,3) {};%
        \node (04) at (0,4) {};%
        \node (05) at (0,5) {};%

        \node (11) at (1,1) {};%
        \node (12) at (1,2) {};%
        \node (13) at (1,3) {};%
        \node (14) at (1,4) {};%
        \node (15) at (1,5) {};%

        % \draw[fill=black] (12) circle (\r);%
        % \draw[fill=black] (13) circle (\r);%
        % \draw[fill=black] (14) circle (\r);%

        \draw (02.center) -- (12.center);%
        \draw (03.center) -- (14.center);%
        \draw (02.center) -- (13.center);%
        \draw (04.center) -- (14.center);%
        \begin{scope}[xshift=1cm]
          \node (01) at (0,1) {};%
          \node (02) at (0,2) {};%
          \node (03) at (0,3) {};%
          \node (04) at (0,4) {};%
          \node (05) at (0,5) {};%

          \node[label=right:{$1$}] (11) at (1,1) {};%
          \node[label=right:{$2$}] (12) at (1,2) {};%
          \node[label=right:{$3$}] (13) at (1,3) {};%
          \node[label=right:{$4$}] (14) at (1,4) {};%
          \node[label=right:{$5$}] (15) at (1,5) {};%

          \draw[fill=black] (12) circle (\r);%
          \draw[fill=black] (13) circle (\r);%
          \draw[fill=black] (14) circle (\r);%
          \draw[fill=black] (15) circle (\r);%

          \node (11) at (1,1) {};%
          \node (12) at (1,2) {};%
          \node (13) at (1,3) {};%
          \node (14) at (1,4) {};%
          \node (15) at (1,5) {};%
          
          \node at (4.5,3) {and};%

          \draw (02.center) -- (12.center);%
          \draw (03.center) -- (13.center);%
          \draw (04.center) -- (14.center);%
          \draw (04.center) -- (15.center);%
        \end{scope}
      \end{scope}
    \end{tikzpicture}\qquad
    \begin{tikzpicture}[scale=0.5]
      \pgfmathsetmacro{\r}{0.1}%
      \node[label=left:{$1$}] (01) at (0,1) {};%
      \node[label=left:{$2$}] (02) at (0,2) {};%
      \node[label=left:{$3$}] (03) at (0,3) {};%
      \node[label=left:{$4$}] (04) at (0,4) {};%
      \node[label=left:{$5$}] (05) at (0,5) {};%

      \node (11) at (1,1) {};%
      \node (12) at (1,2) {};%
      \node (13) at (1,3) {};%
      \node (14) at (1,4) {};%
      \node (15) at (1,5) {};%

      \draw[fill=black] (01) circle (\r);%
      \draw[fill=black] (02) circle (\r);%
      \draw[fill=black] (03) circle (\r);%
      \draw[fill=black] (04) circle (\r);%

      \draw (01.center) -- (11.center);%
      \draw (02.center) -- (12.center);%
      \draw (03.center) -- (14.center);%
      \draw (04.center) -- (14.center);%

      \begin{scope}[xshift=1cm]
        \node (01) at (0,1) {};%
        \node (02) at (0,2) {};%
        \node (03) at (0,3) {};%
        \node (04) at (0,4) {};%
        \node (05) at (0,5) {};%

        \node (11) at (1,1) {};%
        \node (12) at (1,2) {};%
        \node (13) at (1,3) {};%
        \node (14) at (1,4) {};%
        \node (15) at (1,5) {};%

        % \draw[fill=black] (12) circle (\r);%
        % \draw[fill=black] (13) circle (\r);%
        % \draw[fill=black] (14) circle (\r);%

        \draw (01.center) -- (12.center);%
        \draw (04.center) -- (15.center);%
        \draw (02.center) -- (12.center);%
        \draw (04.center) -- (14.center);%
        \begin{scope}[xshift=1cm]
          \node (01) at (0,1) {};%
          \node (02) at (0,2) {};%
          \node (03) at (0,3) {};%
          \node (04) at (0,4) {};%
          \node (05) at (0,5) {};%

          \node[label=right:{$1$}] (11) at (1,1) {};%
          \node[label=right:{$2$}] (12) at (1,2) {};%
          \node[label=right:{$3$}] (13) at (1,3) {};%
          \node[label=right:{$4$}] (14) at (1,4) {};%
          \node[label=right:{$5$}] (15) at (1,5) {};%

          \draw[fill=black] (12) circle (\r);%
          \draw[fill=black] (13) circle (\r);%
          \draw[fill=black] (14) circle (\r);%
          \draw[fill=black] (15) circle (\r);%

          \node (11) at (1,1) {};%
          \node (12) at (1,2) {};%
          \node (13) at (1,3) {};%
          \node (14) at (1,4) {};%
          \node (15) at (1,5) {};%

          \draw (02.center) -- (12.center);%
          \draw (02.center) -- (13.center);%
          \draw (04.center) -- (14.center);%
          \draw (05.center) -- (15.center);%
        \end{scope}
      \end{scope}
    \end{tikzpicture}    
  \end{center}
  are the negative of each other.
\end{example}

\begin{example}
  Let $n=5$ and $d=3$. Let $I=\set{1,2,3}$, $J=\set{2,3,4}$ and $K=\set{3,4,5}$.
  Consider the following composite of strand diagrams $I\to J$ and $J\to K$:
    \begin{center}
    \begin{tikzpicture}[scale=0.5]
      \pgfmathsetmacro{\r}{0.1}%
      \node[label=left:{$1$}] (01) at (0,1) {};%
      \node[label=left:{$2$}] (02) at (0,2) {};%
      \node[label=left:{$3$}] (03) at (0,3) {};%
      \node[label=left:{$4$}] (04) at (0,4) {};%
      \node[label=left:{$5$}] (05) at (0,5) {};%

      \node (11) at (1,1) {};%
      \node (12) at (1,2) {};%
      \node (13) at (1,3) {};%
      \node (14) at (1,4) {};%
      \node (15) at (1,5) {};%

      \draw[fill=black] (01) circle (\r);%
      \draw[fill=black] (02) circle (\r);%
      \draw[fill=black] (03) circle (\r);%

      \draw (01.center) -- (12.center);%
      \draw (02.center) -- (12.center);%
      \draw (03.center) -- (13.center);%

      \begin{scope}[xshift=1cm]
        \node (01) at (0,1) {};%
        \node (02) at (0,2) {};%
        \node (03) at (0,3) {};%
        \node (04) at (0,4) {};%
        \node (05) at (0,5) {};%

        \node[label=right:{$1$}] (11) at (1,1) {};%
        \node[label=right:{$2$}] (12) at (1,2) {};%
        \node[label=right:{$3$}] (13) at (1,3) {};%
        \node[label=right:{$4$}] (14) at (1,4) {};%
        \node[label=right:{$5$}] (15) at (1,5) {};%

        \node at (3,3) {\LARGE$\bullet$};%

        \draw[fill=black] (12) circle (\r);%
        \draw[fill=black] (13) circle (\r);%
        \draw[fill=black] (14) circle (\r);%

        \draw (02.center) -- (12.center);%
        \draw (02.center) -- (13.center);%
        \draw (03.center) -- (14.center);%
        \begin{scope}[xshift=5cm]
          \node[label=left:{$1$}] (01) at (0,1) {};%
          \node[label=left:{$2$}] (02) at (0,2) {};%
          \node[label=left:{$3$}] (03) at (0,3) {};%
          \node[label=left:{$4$}] (04) at (0,4) {};%
          \node[label=left:{$5$}] (05) at (0,5) {};%

          \node (11) at (1,1) {};%
          \node (12) at (1,2) {};%
          \node (13) at (1,3) {};%
          \node (14) at (1,4) {};%
          \node (15) at (1,5) {};%

          \draw[fill=black] (02) circle (\r);%
          \draw[fill=black] (03) circle (\r);%
          \draw[fill=black] (04) circle (\r);%

          \draw (02.center) -- (13.center);%
          \draw (03.center) -- (13.center);%
          \draw (04.center) -- (14.center);%
          \begin{scope}[xshift=1cm]
            \node (01) at (0,1) {};%
            \node (02) at (0,2) {};%
            \node (03) at (0,3) {};%
            \node (04) at (0,4) {};%
            \node (05) at (0,5) {};%

            \node[label=right:{$1$}] (11) at (1,1) {};%
            \node[label=right:{$2$}] (12) at (1,2) {};%
            \node[label=right:{$3$}] (13) at (1,3) {};%
            \node[label=right:{$4$}] (14) at (1,4) {};%
            \node[label=right:{$5$}] (15) at (1,5) {};%

            \node at (3.5,3) {\LARGE$=$};%
            
            \draw[fill=black] (13) circle (\r);%
            \draw[fill=black] (14) circle (\r);%
            \draw[fill=black] (15) circle (\r);%

            \draw (03.center) -- (13.center);%
            \draw (03.center) -- (14.center);%
            \draw (04.center) -- (15.center);%
          \end{scope}
        \end{scope}
      \end{scope}
    \end{tikzpicture}\quad
    \begin{tikzpicture}[scale=0.5]
      \pgfmathsetmacro{\r}{0.1} \node[label=left:{$1$}] (01) at (0,1) {};%
      \node[label=left:{$2$}] (02) at (0,2) {};%
      \node[label=left:{$3$}] (03) at (0,3) {};%
      \node[label=left:{$4$}] (04) at (0,4) {};%
      \node[label=left:{$5$}] (05) at (0,5) {};%

      \node (11) at (1,1) {};%
      \node (12) at (1,2) {};%
      \node (13) at (1,3) {};%
      \node (14) at (1,4) {};%
      \node (15) at (1,5) {};%

      \draw[fill=black] (01) circle (\r);%
      \draw[fill=black] (02) circle (\r);%
      \draw[fill=black] (03) circle (\r);%

      \draw (01.center) -- (12.center);%
      \draw (02.center) -- (12.center);%
      \draw (03.center) -- (13.center);%

      \begin{scope}[xshift=1cm]
        \node (01) at (0,1) {};%
        \node (02) at (0,2) {};%
        \node (03) at (0,3) {};%
        \node (04) at (0,4) {};%
        \node (05) at (0,5) {};%

        \node (11) at (1,1) {};%
        \node (12) at (1,2) {};%
        \node (13) at (1,3) {};%
        \node (14) at (1,4) {};%
        \node (15) at (1,5) {};%

        % \draw[fill=black] (12) circle (\r);%
        % \draw[fill=black] (13) circle (\r);%
        % \draw[fill=black] (14) circle (\r);%

        \draw (02.center) -- (12.center);%
        \draw (02.center) -- (13.center);%
        \draw (03.center) -- (14.center);%
        \begin{scope}[xshift=1cm]
          \node (01) at (0,1) {};%
          \node (02) at (0,2) {};%
          \node (03) at (0,3) {};%
          \node (04) at (0,4) {};%
          \node (05) at (0,5) {};%

          \node (11) at (1,1) {};%
          \node (12) at (1,2) {};%
          \node (13) at (1,3) {};%
          \node (14) at (1,4) {};%
          \node (15) at (1,5) {};%

          \draw (02.center) -- (13.center);%
          \draw (03.center) -- (13.center);%
          \draw (04.center) -- (14.center);%
          \begin{scope}[xshift=1cm]
            \node (01) at (0,1) {};%
            \node (02) at (0,2) {};%
            \node (03) at (0,3) {};%
            \node (04) at (0,4) {};%
            \node (05) at (0,5) {};%

            \node[label=right:{$1$}] (11) at (1,1) {};%
            \node[label=right:{$2$}] (12) at (1,2) {};%
            \node[label=right:{$3$}] (13) at (1,3) {};%
            \node[label=right:{$4$}] (14) at (1,4) {};%
            \node[label=right:{$5$}] (15) at (1,5) {};%

            \node at (4,3) {\LARGE$=\ 0$};%
            
            \draw[fill=black] (13) circle (\r);%
            \draw[fill=black] (14) circle (\r);%
            \draw[fill=black] (15) circle (\r);%

            \draw (03.center) -- (13.center);%
            \draw (03.center) -- (14.center);%
            \draw (04.center) -- (15.center);%
          \end{scope}
        \end{scope}
      \end{scope}
    \end{tikzpicture}
    \end{center}
    Since concatenating the two strand diagrams results in a strand diagram in
    which two strands cross more than once, their product vanishes in the
    strands algebra $\dgBruhat{n}{d}$.
  \end{example}

\begin{example}
  Let $n=5$ and $d=3$. Let $I=\set{1,2,3}$ and $J=\set{3,4,5}$. Below we
  exemplify the action of the differential on a strand diagram from $I$ to $J$:
  \begin{center}
    \begin{tikzpicture}[scale=0.5]
      \pgfmathsetmacro{\r}{0.1} \node[label=left:{$1$}] (01) at (0,1) {};%
      \node[label=left:{$2$}] (02) at (0,2) {};%
      \node[label=left:{$3$}] (03) at (0,3) {};%
      \node[label=left:{$4$}] (04) at (0,4) {};%
      \node[label=left:{$5$}] (05) at (0,5) {};%

      \node (11) at (1,1) {};%
      \node (12) at (1,2) {};%
      \node (13) at (1,3) {};%
      \node (14) at (1,4) {};%
      \node (15) at (1,5) {};%

      \draw[fill=black] (01) circle (\r);%
      \draw[fill=black] (02) circle (\r);%
      \draw[fill=black] (03) circle (\r);%

      \draw (01.center) -- (12.center);%
      \draw (02.center) -- (12.center);%
      \draw (03.center) -- (13.center);%

      \begin{scope}[xshift=1cm]
        \node (01) at (0,1) {};%
        \node (02) at (0,2) {};%
        \node (03) at (0,3) {};%
        \node (04) at (0,4) {};%
        \node (05) at (0,5) {};%

        \node (11) at (1,1) {};%
        \node (12) at (1,2) {};%
        \node (13) at (1,3) {};%
        \node (14) at (1,4) {};%
        \node (15) at (1,5) {};%

        \draw (02.center) -- (12.center);%
        \draw (02.center) -- (13.center);%
        \draw (03.center) -- (13.center);%
        \begin{scope}[xshift=1cm]
          \node (01) at (0,1) {};%
          \node (02) at (0,2) {};%
          \node (03) at (0,3) {};%
          \node (04) at (0,4) {};%
          \node (05) at (0,5) {};%

          \node (11) at (1,1) {};%
          \node (12) at (1,2) {};%
          \node (13) at (1,3) {};%
          \node (14) at (1,4) {};%
          \node (15) at (1,5) {};%

          \draw (02.center) -- (13.center);%
          \draw (03.center) -- (13.center);%
          \draw (03.center) -- (14.center);%
          \begin{scope}[xshift=1cm]
            \node (01) at (0,1) {};%
            \node (02) at (0,2) {};%
            \node (03) at (0,3) {};%
            \node (04) at (0,4) {};%
            \node (05) at (0,5) {};%

            \node[label=right:{$1$}] (11) at (1,1) {};%
            \node[label=right:{$2$}] (12) at (1,2) {};%
            \node[label=right:{$3$}] (13) at (1,3) {};%
            \node[label=right:{$4$}] (14) at (1,4) {};%
            \node[label=right:{$5$}] (15) at (1,5) {};%

            \node at (4.5,3) {\LARGE$\xmapsto{\partial}$};
          
            \draw[fill=black] (13) circle (\r);%
            \draw[fill=black] (14) circle (\r);%
            \draw[fill=black] (15) circle (\r);%

            \draw (03.center) -- (13.center);%
            \draw (03.center) -- (14.center);%
            \draw (04.center) -- (15.center);%
          \end{scope}
        \end{scope}
      \end{scope}
    \end{tikzpicture}\qquad
    \begin{tikzpicture}[scale=0.5]
      \pgfmathsetmacro{\r}{0.1} \node[label=left:{$1$}] (01) at (0,1) {};%
      \node[label=left:{$2$}] (02) at (0,2) {};%
      \node[label=left:{$3$}] (03) at (0,3) {};%
      \node[label=left:{$4$}] (04) at (0,4) {};%
      \node[label=left:{$5$}] (05) at (0,5) {};%

      \node (11) at (1,1) {};%
      \node (12) at (1,2) {};%
      \node (13) at (1,3) {};%
      \node (14) at (1,4) {};%
      \node (15) at (1,5) {};%

      \draw[fill=black] (01) circle (\r);%
      \draw[fill=black] (02) circle (\r);%
      \draw[fill=black] (03) circle (\r);%

      \draw (01.center) -- (11.center);%
      \draw (02.center) -- (12.center);%
      \draw (03.center) -- (13.center);%

      \begin{scope}[xshift=1cm]
        \node (01) at (0,1) {};%
        \node (02) at (0,2) {};%
        \node (03) at (0,3) {};%
        \node (04) at (0,4) {};%
        \node (05) at (0,5) {};%

        \node (11) at (1,1) {};%
        \node (12) at (1,2) {};%
        \node (13) at (1,3) {};%
        \node (14) at (1,4) {};%
        \node (15) at (1,5) {};%

        \draw (01.center) -- (12.center);%
        \draw (02.center) -- (13.center);%
        \draw (03.center) -- (13.center);%
        \begin{scope}[xshift=1cm]
          \node (01) at (0,1) {};%
          \node (02) at (0,2) {};%
          \node (03) at (0,3) {};%
          \node (04) at (0,4) {};%
          \node (05) at (0,5) {};%

          \node (11) at (1,1) {};%
          \node (12) at (1,2) {};%
          \node (13) at (1,3) {};%
          \node (14) at (1,4) {};%
          \node (15) at (1,5) {};%

          \draw (02.center) -- (13.center);%
          \draw (03.center) -- (13.center);%
          \draw (03.center) -- (14.center);%
          \begin{scope}[xshift=1cm]
            \node (01) at (0,1) {};%
            \node (02) at (0,2) {};%
            \node (03) at (0,3) {};%
            \node (04) at (0,4) {};%
            \node (05) at (0,5) {};%

            \node[label=right:{$1$}] (11) at (1,1) {};%
            \node[label=right:{$2$}] (12) at (1,2) {};%
            \node[label=right:{$3$}] (13) at (1,3) {};%
            \node[label=right:{$4$}] (14) at (1,4) {};%
            \node[label=right:{$5$}] (15) at (1,5) {};%

            \node at (3.5,3) {\LARGE$+$};

            \draw[fill=black] (13) circle (\r);%
            \draw[fill=black] (14) circle (\r);%
            \draw[fill=black] (15) circle (\r);%

            \draw (03.center) -- (13.center);%
            \draw (03.center) -- (14.center);%
            \draw (04.center) -- (15.center);%
          \end{scope}
        \end{scope}
      \end{scope}
    \end{tikzpicture}\quad
    \begin{tikzpicture}[scale=0.5]
      \pgfmathsetmacro{\r}{0.1} \node[label=left:{$1$}] (01) at (0,1) {};%
      \node[label=left:{$2$}] (02) at (0,2) {};%
      \node[label=left:{$3$}] (03) at (0,3) {};%
      \node[label=left:{$4$}] (04) at (0,4) {};%
      \node[label=left:{$5$}] (05) at (0,5) {};%

      \node (11) at (1,1) {};%
      \node (12) at (1,2) {};%
      \node (13) at (1,3) {};%
      \node (14) at (1,4) {};%
      \node (15) at (1,5) {};%

      \draw[fill=black] (01) circle (\r);%
      \draw[fill=black] (02) circle (\r);%
      \draw[fill=black] (03) circle (\r);%

      \draw (01.center) -- (12.center);%
      \draw (02.center) -- (12.center);%
      \draw (03.center) -- (13.center);%

      \begin{scope}[xshift=1cm]
        \node (01) at (0,1) {};%
        \node (02) at (0,2) {};%
        \node (03) at (0,3) {};%
        \node (04) at (0,4) {};%
        \node (05) at (0,5) {};%

        \node (11) at (1,1) {};%
        \node (12) at (1,2) {};%
        \node (13) at (1,3) {};%
        \node (14) at (1,4) {};%
        \node (15) at (1,5) {};%

        \draw (02.center) -- (12.center);%
        \draw (02.center) -- (13.center);%
        \draw (03.center) -- (13.center);%
        \begin{scope}[xshift=1cm]
          \node (01) at (0,1) {};%
          \node (02) at (0,2) {};%
          \node (03) at (0,3) {};%
          \node (04) at (0,4) {};%
          \node (05) at (0,5) {};%

          \node (11) at (1,1) {};%
          \node (12) at (1,2) {};%
          \node (13) at (1,3) {};%
          \node (14) at (1,4) {};%
          \node (15) at (1,5) {};%

          \draw (02.center) -- (12.center);%
          \draw (03.center) -- (13.center);%
          \draw (03.center) -- (14.center);%
          \begin{scope}[xshift=1cm]
            \node (01) at (0,1) {};%
            \node (02) at (0,2) {};%
            \node (03) at (0,3) {};%
            \node (04) at (0,4) {};%
            \node (05) at (0,5) {};%

            \node[label=right:{$1$}] (11) at (1,1) {};%
            \node[label=right:{$2$}] (12) at (1,2) {};%
            \node[label=right:{$3$}] (13) at (1,3) {};%
            \node[label=right:{$4$}] (14) at (1,4) {};%
            \node[label=right:{$5$}] (15) at (1,5) {};%

            \draw[fill=black] (13) circle (\r);%
            \draw[fill=black] (14) circle (\r);%
            \draw[fill=black] (15) circle (\r);%

            \draw (02.center) -- (13.center);%
            \draw (03.center) -- (14.center);%
            \draw (04.center) -- (15.center);%
          \end{scope}
        \end{scope}
      \end{scope}
    \end{tikzpicture}
  \end{center}
  Notice that resolving the second crossing would result in a strand diagram in
  which two strands cross more than once; therefore, this resolution does not
  contribute to the differential.
\end{example}

The following observation is crucial for our purposes.

\begin{proposition}
  \label{lemma:Khovanov}
  Let $I,J\in\mychoose{n}{d}$ be such that $I\leq J$. For each permutation
  $\pi\in\interval{I}{J}$ choose a strand diagram $\varphi=\varphi_\pi\colon
  I\to J$ such that $\pi_\varphi=\pi$. The set
  $\set{\varphi_\pi}[\pi\in\interval{I}{J}]$ is a free $\kk$-basis of the graded
  $\kk$-module $\hom[I][J]$. In particular, $\hom[I][J]$ is isomorphic to the
  cochain complex $C\interval{I}{J}$.
\end{proposition}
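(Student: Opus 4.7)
The plan is to construct an isomorphism of cochain complexes $\Psi\colon C\interval{I}{J}\to\hom[I][J]$ sending $\pi\mapsto\varphi_\pi$, which will simultaneously exhibit $\set{\varphi_\pi}[\pi\in\interval{I}{J}]$ as a free $\kk$-basis of the underlying graded $\kk$-module of $\hom[I][J]$ and match the two differentials.

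First I would verify that every strand diagram $\varphi\colon I\to J$ equals $\pm\varphi_{\pi_\varphi}$ in $\hom[I][J]$. By the remarks preceding the proposition, equivalence classes of strand diagrams from $I$ to $J$ are in bijection with reduced expressions for permutations of the Bruhat interval $\interval{I}{J}$, and Matsumoto's theorem asserts that any two reduced expressions of a given permutation are linked by a sequence of commutation and braid moves. Commutation moves correspond precisely to the sign-relations in the definition of $\hom[I][J]$, so they contribute the expected signs. The main obstacle will be to verify that braid moves also relate the associated strand diagrams by a sign in $\hom[I][J]$; I plan to reduce this by locality to an explicit three-strand calculation for the longest element of $\SymGrp{3}$, where the relative sign is determined by the time-order of the three crossings appearing in each reduced expression.

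Granting this, $\set{\varphi_\pi}[\pi\in\interval{I}{J}]$ spans $\hom[I][J]$, and it remains to establish linear independence. To this end, I would produce a candidate inverse to $\Psi$ by assigning a sign $\sigma(\varphi)\in\set{\pm1}$ to each equivalence class of strand diagrams, normalised so that $\sigma(\varphi_\pi)=1$, and declaring the map to send $\varphi$ to $\sigma(\varphi)\cdot\pi_\varphi$. The consistency of $\sigma$---with respect to both commutation and braid moves---will follow from the same three-strand analysis of the previous paragraph, and the resulting map will be inverse to $\Psi$ on the generators $\varphi_\pi$.

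Finally, to upgrade $\Psi$ to an isomorphism of cochain complexes, I would transport the differential on $\hom[I][J]$ across $\Psi$. By the definition of $\partial$, the image of $\varphi_\pi$ is a signed sum over the covering relations $\pi'\triangleleft\pi$, producing a sign assignment on the edges of the Hasse diagram of $\interval{I}{J}$. The identity $\partial^2=0$, specialised to strand diagrams with exactly two resolvable crossings, precisely encodes the square relation defining a balanced signature. The second assertion of \Cref{prop:Bruhat_acyclic_intervals} then yields an isomorphism of cochain complexes between $\hom[I][J]$ and $C\interval{I}{J}$ equipped with any balanced signature, completing the proof.
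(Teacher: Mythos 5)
Your proposal is correct and follows the same skeleton as the paper's proof. The second half --- reading off a balanced signature on the Hasse diagram of $\interval{I}{J}$ from the signs of $\partial$, using $\partial^2=0$ together with the fact that every length-two Bruhat interval is a square, and then invoking part~(2) of \Cref{prop:Bruhat_acyclic_intervals} --- is exactly what the paper does. For the first half the paper simply cites Lemma~1 of \cite{Kho14} verbatim, whereas you sketch a self-contained argument via Matsumoto's theorem; the substance is the same, but one step of your sketch needs more care. Knowing the sign contributed by each individual commutation or braid move does not by itself make your sign function $\sigma$ well defined: you must still rule out that two different chains of moves joining $\varphi_\pi$ to $\varphi$ accumulate different signs, and that is not a purely local (three-strand) matter. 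The clean fix --- and the actual content of Khovanov's lemma --- is to define $\sigma(\varphi)$ intrinsically as the sign of the permutation comparing the time-order of the crossings of $\varphi$ with that of $\varphi_{\pi_\varphi}$; this makes sense because both orders are linear orders on the same finite set, namely the inversions of $\pi_\varphi$ (no two strands cross twice). A commutation move transposes two adjacent crossings, and a braid move reverses the order of the three pairwise crossings among three strands (your three-strand computation), so every move flips this parity; hence the sign accumulated along any chain of moves equals $\sigma(\varphi)$, is path-independent, and $\sigma$ descends to the desired inverse of $\Psi$. With that adjustment your argument is complete and agrees with the paper's.
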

\begin{proof}
  The proof of Lemma~1 in \cite{Kho14} applies verbatim. The second claim
  follows from \Cref{prop:dgBruhat:acyclic}, taking into account that the signs
  in the definition of the differential on $\hom[I][J]$ yield a balanced
  signature on the Hasse quiver of the Bruhat interval $\interval{I}{J}$.
\end{proof}

We also record the following elementary observation.

\begin{lemma}
  \label{lemma:dgBruhat:unique_differential}
  There is a unique differential on the underlying graded $\kk$-algebra of the
  strands algebra $\dgBruhat{n}{d}$ which endows it with the structure of a
  differential graded $\kk$-algebra, subject to the additional restriction that
  $\partial(\varphi)=e_{JI}$ for every strand diagram $\varphi\colon I\to J$
  with a single crossing.
\end{lemma}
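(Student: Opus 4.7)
The plan is to exploit the observation that $\dgBruhat{n}{d}$, viewed as a graded $\kk$-algebra over the subalgebra generated by the idempotents $e_{II}$, is generated by the strand diagrams with a single crossing. Together with the graded Leibniz rule, this will force any differential with the prescribed values on such diagrams to coincide with $\partial$.

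First, I would verify the generation claim. Given a strand diagram $\varphi\colon I\to J$ with $k=\inv{\varphi}\geq 1$ crossings, any reduced expression $s_{a_k}\cdots s_{a_1}$ for $\pi_\varphi$ (together with a strand-diagram realisation of each simple transposition) exhibits a factorisation
\[
  \varphi=\varphi_k\circ\cdots\circ\varphi_1
\]
in $\dgBruhat{n}{d}$, where each $\varphi_i$ is a single-crossing diagram from $I_{i-1}$ to $I_i$ for intermediate multi-subsets $I_0=I,\dots,I_k=J$. Since the expression is reduced, $\inv{\pi_{\varphi_k}\cdots\pi_{\varphi_1}}=k$, so by the definition of the composition law in $\dgBruhat{n}{d}$ no partial composite vanishes.

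Second, I would iterate the graded Leibniz rule on this factorisation. Each $\varphi_i$ has degree $-1$, so any differential $D$ extending the prescribed values $D(\varphi_i)=e_{I_iI_{i-1}}$ is forced to satisfy
\[
  D(\varphi)=\sum_{i=1}^{k}(-1)^{k-i}\,\varphi_k\circ\cdots\circ\varphi_{i+1}\circ e_{I_iI_{i-1}}\circ\varphi_{i-1}\circ\cdots\circ\varphi_1,
\]
which one recognises as the signed sum of the strand diagrams obtained from $\varphi$ by resolving individual crossings. Hence $D$ is uniquely determined on every strand-diagram generator and, by $\kk$-linearity, on all of $\dgBruhat{n}{d}$.

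The only subtle point — and the anticipated obstacle — is well-definedness: different reduced expressions for $\pi_\varphi$, or different strand realisations of the same expression, could a priori yield different expressions for $D(\varphi)$. However, the differential $\partial$ explicitly constructed above satisfies both the graded Leibniz rule and the prescribed boundary condition, so every formula produced by the Leibniz-rule argument must coincide with $\partial(\varphi)$. This collapses all apparent ambiguities and simultaneously delivers the uniqueness assertion $D=\partial$.
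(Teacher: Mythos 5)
Your proposal is correct and follows essentially the same route as the paper: the paper's proof simply observes that $\dgBruhat{n}{d}$ is generated as a graded $\kk$-algebra in degrees $0$ and $-1$ and that the graded Leibniz rule then determines the differential from its values on those generators (vanishing in degree $0$ being automatic for degree reasons). Your explicit factorisation into single-crossing diagrams, the resulting signed formula, and the remark that the existing $\partial$ resolves any well-definedness ambiguity are exactly the details the paper leaves implicit.
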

\begin{proof}
  By construction, the strands algebra $\dgBruhat{n}{d}$ is generated as a
  graded $\kk$-algebra in cohomological degrees $0$ and $-1$ (since every
  reduced word is, by definition, a product of simple transpositions). The
  graded Leibniz rule implies that the differential on $\dgBruhat{n}{d}$ is
  completely determined by its action on the generators of these cohomological
  degrees. The claim follows.
\end{proof}

\subsubsection{The quasi-isomorphism $\dgBruhat{n}{d}\simeq\Aus{n}{d}$}

The following theorem is the first step in establishing the quasi-equivalence
between the partially wrapped Fukaya category $\WF{n}{d}$ and the perfect
derived category $\perf{\Aus{n}{d}}$.

\begin{theorem}
  \label{thm:Bruhat_qiso_Auslander}
  There is a quasi-isomorphism of differential graded $\kk$-algebras
  \[
    \morphism{\dgBruhat{n}{d}}{H^0(\dgBruhat{n}{d})}[\simeq]
  \]
  and an isomorphism of (ungraded) $\kk$-algebras
  $H^0(\dgBruhat{n}{d})\cong\Aus{n}{d}$.
\end{theorem}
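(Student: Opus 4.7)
The plan is to analyse $\dgBruhat{n}{d}$ componentwise, then package the resulting cohomology into a dg-algebra quasi-isomorphism. First, I would fix $I, J \in \mychoose{n}{d}$ and use the preceding results of this subsection: by \Cref{lemma:Khovanov}, choosing a representative strand diagram $\varphi_\pi \colon I \to J$ for each $\pi \in \interval{I}{J}$ identifies $\hom[I][J]$ with the cochain complex $C\interval{I}{J}$ (in particular, $\hom[I][J]$ vanishes unless $I \leq J$ by \Cref{lemma:intervalIJ}). The signs appearing in the differential assemble into a balanced signature on the Hasse diagram of $[e, \pi^{JI}_0]$, so \Cref{prop:Bruhat_acyclic_intervals} implies that $\hom[I][J]$ is acyclic whenever $\pi^{JI}_0 \neq e$, and otherwise reduces to $\kk \cdot e_{JI}$ concentrated in degree zero. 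Consequently $H^\ast(\dgBruhat{n}{d})$ is concentrated in degree zero with $\kk$-basis indexed by the pairs $(I, J)$ satisfying $\pi^{JI}_0 = e$.

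The quasi-isomorphism $p \colon \dgBruhat{n}{d} \to H^0(\dgBruhat{n}{d})$ is then simply the canonical projection. Because $\dgBruhat{n}{d}$ is concentrated in non-positive cohomological degrees, every degree-zero element is automatically a cocycle, and the surjection of degree-zero components onto $H^0(\dgBruhat{n}{d})$, extended by zero in negative degrees, defines a map of cochain complexes which induces the identity on cohomology. Multiplicativity is immediate: since $H^0(\dgBruhat{n}{d})$ sits entirely in degree zero, both $p(xy)$ and $p(x)p(y)$ vanish unless $\lvert x \rvert = \lvert y \rvert = 0$, and in that case they agree tautologically.

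To identify $H^0(\dgBruhat{n}{d})$ with $\Aus{n}{d}$ as ungraded $\kk$-algebras, I would first verify that $\pi^{JI}_0 = e$ is equivalent to $j_a < i_{a+1}$ for all $1 \leq a < d$. This follows from \Cref{lemma:intervalIJ}, which describes the maximum $\pi^{JI}_0$ of the subset $\{\pi \in \SymGrp{d} : i_a \leq j_{\pi(a)}\}$; a simple transposition $s_a$ lies in this subset precisely when $i_{a+1} \leq j_a$, so $\pi^{JI}_0 = e$ iff no such $s_a$ is admissible. This matches exactly the defining relations for the non-vanishing generators $f_{JI}$ of $\Aus{n}{d}$, so $f_{JI} \mapsto [e_{JI}]$ is a $\kk$-linear bijection on bases. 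Multiplicativity is a short check: horizontal concatenation of the trivial strand diagrams $e_{JI}$ and $e_{KJ}$ yields $e_{KI}$, whose class in $H^0$ is non-zero precisely when $\pi^{KI}_0 = e$, i.e.\ when $k_a < i_{a+1}$ for all $a$, which mirrors exactly the rule $f_{KJ} \cdot f_{JI} = f_{KI}$ in $\Aus{n}{d}$; compositions with mismatched middle indices vanish structurally in both algebras.

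The main subtlety in the whole argument is that the signs appearing in $\partial$ depend on the (non-canonical) choice of strand-diagram representatives and do not a priori match any fixed balanced signature. However, this is exactly what is resolved by part (2) of \Cref{prop:Bruhat_acyclic_intervals}, which guarantees that the chain complex $C[e, \pi^{JI}_0]$ is acyclic for any balanced signature, so no coherent global choice is needed to carry out the componentwise acyclicity argument.
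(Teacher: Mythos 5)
Your proposal is correct and follows essentially the same route as the paper: identify each $\hom[I][J]$ with $C\interval{I}{J}$ via \Cref{lemma:Khovanov}, invoke \Cref{prop:Bruhat_acyclic_intervals} to get acyclicity when $\pi^{JI}_0\neq e$ (so cohomology is concentrated in degree $0$ and the canonical projection is a quasi-isomorphism of dg algebras), and then match $H^0$ with $\Aus{n}{d}$ via the observation that $s_a\in\interval{I}{J}$ precisely when $i_{a+1}\leq j_a$. The only cosmetic difference is that the paper first identifies $\dgBruhat{n}{d}^0$ with the incidence algebra and then computes the image of the differential, whereas you read off the vanishing of $[e_{JI}]$ directly from the componentwise acyclicity; both hinge on the same simple-transposition criterion.
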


\begin{remark}
  When working in characteristic $2$, the cohomology of the strands algebra
  $\dgBruhat{n}{d}$ is computed additively in Section~4.1 in \cite{LOT15}. However, the
  authors do not relate the cohomology algebra of the strands algebra with the
  higher Auslander algebra $\Aus{n}{d}$.
\end{remark}

\Cref{thm:Bruhat_qiso_Auslander} is a consequence of the following results.

\begin{proposition}
  \label{prop:dgBruhat:acyclic}
  Let $I,J\in\mychoose{n}{d}$ be such that $I\leq J$ and $\interval{I}{J}$ the
  corresponding Bruhat interval. There are isomorphisms of
  graded $\kk$-modules
  \[
    H^*(\hom[I][J])\cong\begin{cases}
      \kk(0)&\text{if }\pi^{JI}_0=e,\\
      0&\text{otherwise}.
      \end{cases}
  \]
  In particular, the differential graded $\kk$-algebra $\dgBruhat{n}{d}$ has its
  cohomology concentrated in degree $0$:
  \[
    H^*(\dgBruhat{n}{d})=H^0(\dgBruhat{n}{d}).\qedhere
  \]
\end{proposition}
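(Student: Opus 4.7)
The plan is to reduce the claim to the acyclicity statement already established in \Cref{prop:Bruhat_acyclic_intervals} via the identification of each hom complex in $\dgBruhat{n}{d}$ with the cellular cochain complex of a Bruhat interval. Essentially all the work has been done; the proposition is a package-deal corollary of what has been set up.

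Let $I,J\in\mychoose{n}{d}$. If $I\not\leq J$ then by \Cref{lemma:intervalIJ} there are no strand diagrams from $I$ to $J$ (equivalently $\interval{I}{J}=\emptyset$), so $\hom[I][J]=0$ and there is nothing to prove. Assume then $I\leq J$, so that $\interval{I}{J}$ is a genuine Bruhat interval with minimal element $e$ and maximal element $\pi^{JI}_0$. By \Cref{lemma:Khovanov}, choosing a strand diagram representative $\varphi_\pi$ for each $\pi\in\interval{I}{J}$ gives a $\kk$-basis of $\hom[I][J]$, and the assignment $\varphi_\pi\mapsto\pi$ induces an isomorphism of graded $\kk$-modules $\hom[I][J]\cong C\interval{I}{J}$ (with $\pi$ in degree $-\inv{\pi}$). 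Moreover, the signs appearing in the definition of the differential of $\hom[I][J]$---which come from the positions of the unresolved crossings relative to the resolved one---form a balanced signature on the Hasse quiver of $\interval{I}{J}$; this is the content of the second assertion of \Cref{lemma:Khovanov} together with the corresponding remark in the excerpt.

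Now I distinguish two cases. If $\pi^{JI}_0=e$, then $\interval{I}{J}=[e,e]=\set{e}$ is a one-element poset concentrated in cohomological degree $0$, so $\hom[I][J]\cong\kk(0)$ has cohomology equal to $\kk(0)$ in degree $0$. If $\pi^{JI}_0\neq e$, then $\interval{I}{J}$ is a non-trivial initial segment of Bruhat order and \Cref{prop:Bruhat_acyclic_intervals} applies (or more precisely its variant for the initial interval $[e,\pi^{JI}_0]$ that is the subject of that proposition): for any balanced signature, the associated cochain complex is acyclic, so $H^*(\hom[I][J])=0$. This proves the case analysis in the statement.

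The ``in particular'' clause then follows at once from the direct-sum decomposition $\dgBruhat{n}{d}=\bigoplus_{I,J}\hom[I][J]$: every summand contributes either $0$ or a copy of $\kk$ placed in cohomological degree $0$ (corresponding to the idempotents $e_{JI}$ for those pairs with $\pi^{JI}_0=e$). I do not expect any real obstacle here; the only point requiring a little care is the sign-bookkeeping needed to confirm that the differential on $\hom[I][J]$ really is an instance of the abstract differential $\partial_\varepsilon$ of \Cref{prop:Bruhat_acyclic_intervals}, but this identification is already packaged into \Cref{lemma:Khovanov} and the remark that follows it, so the argument becomes a short assembly of previously proved facts.
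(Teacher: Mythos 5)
Your proposal is correct and follows essentially the same route as the paper: identify $\hom[I][J]$ with the complex $C\interval{I}{J}$ via \Cref{lemma:Khovanov} (whose signs yield a balanced signature), then invoke \Cref{prop:Bruhat_acyclic_intervals} to conclude acyclicity when $\pi^{JI}_0\neq e$ and observe the interval is a single point in degree $0$ otherwise. Your write-up is in fact slightly cleaner, since it cites \Cref{prop:Bruhat_acyclic_intervals} where the paper's proof contains a circular self-reference that is evidently a typo for that proposition.
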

\begin{proof}
  Let $I,J\in\mychoose{n}{d}$. By \Cref{lemma:Khovanov} the cochain complex
  $\hom[I][J]$ is isomorphic to $C\interval{I}{J}$. By
  \Cref{prop:dgBruhat:acyclic} the latter complex is acyclic if the permutation
  $\pi^{JI}_0$ is non-trivial and is otherwise isomorphic to the ground ring $\kk$
  placed in cohomological degree $0$. The claim follows.
\end{proof}

\begin{remark}
  The analogue in characteristic $2$ of \Cref{prop:dgBruhat:acyclic} is proven
  in Proposition~4.2 in \cite{LOT15} where, in the case $\pi^{JI}_0\neq e$, the
  authors construct an explicit null-homotopy of the identity morphism of the
  cochain complex $\hom[I][J]$. In order to avoid the sign considerations
  involved when constructing an explicit null-homotopy in arbitrary
  characteristic, we have opted for the alternative approach hinted at in
  Remark~4.4 in \cite{LOT15}.
\end{remark}

\begin{proposition}
  \label{prop:dgBruhat:Auslander}
  There is an isomorphism of (ungraded) $\kk$-algebras $H^0(\dgBruhat{n}{d})\cong\Aus{n}{d}$.
\end{proposition}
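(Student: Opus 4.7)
The plan is to use \Cref{prop:dgBruhat:acyclic} to read off an explicit basis of $H^0(\dgBruhat{n}{d})$ indexed by pairs $(I,J)$ with $\pi^{JI}_0=e$, to identify this indexing set with the basis of $\Aus{n}{d}$, and to check that the composition of the identity strand diagrams $e_{JI}$ implements the multiplication of the basis vectors $f_{JI}$.

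First I would note that, by \Cref{prop:dgBruhat:acyclic}, each hom-complex $\hom[I][J]$ has cohomology concentrated in degree $0$ and of rank at most $1$; when nonzero it is spanned by the class of the (unique) crossing-free strand diagram $e_{JI}$, which is automatically a cocycle. Thus
\[
  H^0(\dgBruhat{n}{d})=\bigoplus_{\pi^{JI}_0=e}\kk\cdot[e_{JI}].
\]
Next I would unpack the condition $\pi^{JI}_0=e$ using the inductive description from the proof of \Cref{lemma:intervalIJ}: the equality $\pi^{JI}_0(d)=\min\set{a}[i_d\leq j_a]=d$ forces $j_{d-1}<i_d\leq j_d$, and removing the top indices reduces the problem to the analogous one for $(I\setminus\set{i_d},J\setminus\set{j_d})$. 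Iterating yields that $\pi^{JI}_0=e$ is equivalent to $i_a\leq j_a$ for all $a$ together with $j_a<i_{a+1}$ for $1\leq a<d$. This is precisely the condition defining the nonzero basis vectors $f_{JI}$ of the monomial quotient $\Aus{n}{d}$.

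The candidate isomorphism is then the $\kk$-linear map
\[
  \morphism[\phi]{\Aus{n}{d}}{H^0(\dgBruhat{n}{d}),}\quad\morphism*{f_{JI}}{[e_{JI}],}
\]
which is bijective by the basis identification above. To verify multiplicativity, when $J\neq J'$ the strand diagrams $e_{KJ'}$ and $e_{JI}$ are supported at different objects of the underlying $\kk$-category, so their product vanishes, matching $f_{KJ'}\cdot f_{JI}=0$. When $J=J'$, horizontally concatenating $e_{KJ}$ and $e_{JI}$ yields a diagram from $I$ to $K$ with no crossings and trivial underlying permutation, so $e_{KJ}\circ e_{JI}=e_{KI}$ by the composition rule (the condition $\inv{e}=\inv{e}+\inv{e}$ is trivially satisfied). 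Passing to cohomology, $\phi(f_{KJ})\cdot\phi(f_{JI})=[e_{KI}]$, which either recovers $\phi(f_{KI})$ when $\pi^{KI}_0=e$ or vanishes together with $f_{KI}$ otherwise.

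The only nontrivial point is the combinatorial unpacking of $\pi^{JI}_0=e$, which is where the classical Auslander combinatorics of admissible pairs in $\mychoose{n}{d}$ enters; everything else is formal given the explicit composition law in $\dgBruhat{n}{d}$ and the acyclicity already established in \Cref{prop:dgBruhat:acyclic}.
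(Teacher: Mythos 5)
Your proposal is correct and follows essentially the same route as the paper: both arguments combine \Cref{prop:dgBruhat:acyclic} with the identification of the degree-zero part as the incidence algebra of $\mychoose{n}{d}$, and then match the condition $\pi^{JI}_0=e$ with the defining monomial relations of $\Aus{n}{d}$. The only cosmetic difference is that the paper detects $\pi^{JI}_0\neq e$ by exhibiting the simple transposition $s_a\in\interval{I}{J}$ whenever $j_a\geq i_{a+1}$ (so that $e_{JI}$ is visibly a coboundary), whereas you unpack the same condition via the inductive construction of $\pi^{JI}_0$ from \Cref{lemma:intervalIJ}; both are valid.
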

\begin{proof}
  Let $I,J\in\mychoose{n}{d}$. By construction, there is an
  isomorphism of graded $\kk$-modules
  \[
    \dgBruhat{n}{d}^0=\bigoplus_{I,J}\hom[I][J][][0]\cong\bigoplus_{I\leq
      J}\kk\cdot e_{JI}.
  \]
  where $e_{JI}\in\interval{I}{J}$ is the trivial permutation. Notice that the
  right-hand side can be identified with the underlying $\kk$-module of the
  incidence $\kk$-algebra of the poset $\mychoose{n}{d}$. Comparing the
  multiplication laws on both sides, we conclude that the above isomorphism is
  in fact an isomorphism of (ungraded) $\kk$-algebras.

  Let $I,J\in\mychoose{n}{d}$ be such that $I\leq J$ and
  $e_{JI}\in\interval{I}{J}$ the trivial permutation (which we identify with the
  unique equivalence class of strand diagrams $I\to J$ with no crossings).
  Suppose that there exists an index $1\leq a<d$ such that $j_a\geq i_{a+1}$. We
  claim that $e_{JI}=0$ in $H^0(\dgBruhat{n}{d})$ in this case. Indeed, the
  simple transposition $s_a=(a+1,a)$ lies in $\interval{I}{J}$ since
  \[
    i_a\leq j_a<j_{a+1}=j_{s_a(a)}\qquand i_{a+1}\leq j_a=j_{s_a(a+1)},
  \]
  where inequalities on the left-hand side hold by definition and the inequality
  on the right-hand side holds by assumption. Clearly,
  $\partial(\varphi)=e_{JI}$ for any strand diagram $\varphi$ such that
  $\pi_\varphi=s_a$. The claim follows.

  The above argument shows that the above isomorphism of (ungraded)
  $\kk$-algebras between $\dgBruhat{n}{d}^0$ and $\bigoplus_{I\leq J}\kk\cdot e_{JI}$ maps the image
  of the differential on the left-hand side to the submodule
  \[
    \langle e_{JI}\ |\ \exists a:j_a\geq i_{a+1} \rangle
  \]
  on the right hand side. Comparing with the definition of
  the higher Auslander algebra $\Aus{n}{d}$, the claim follows.
\end{proof}

We now give the proof of \Cref{thm:Bruhat_qiso_Auslander}

\begin{proof}[Proof of \Cref{thm:Bruhat_qiso_Auslander}]
  According to \Cref{prop:dgBruhat:acyclic}, the differential graded algebra
  $\dgBruhat{n}{d}$ has its cohomology concentrated in degree $0$. This
  immediately implies that the canonical morphism
  \[
    \morphism{\dgBruhat{n}{d}}{H^0(\dgBruhat{n}{d})}[\simeq]
  \]
  is a quasi-isomorphism. Finally, \Cref{prop:dgBruhat:Auslander} shows that there is an
  isomorphism of (ungraded) $\kk$-algebras
  $H^0(\dgBruhat{n}{d})\cong\Aus{n}{d}$.
\end{proof}

\subsection{The quasi-equivalence $\WF{n}{d}\simeq\perf{\Aus{n}{d}}$}

\begin{notation}
  For $1\leq i\leq n$ we introduce the notation
  \[
    \L{i}\coloneqq L_{0i}.
  \]
  More generally, for $I\in\mychoose{n}{d}$ we introduce the Lagrangian
  \[
    \L{I}\coloneqq\prod_{a=1}^d\L{i_a}=\prod_{a=1}^dL_{0i_a}%
  \]
  in $\Sym{d}{\theDisk\setminus\Stops[][n]}$ and define 
  \[
    \dgAus{n}{d}\coloneqq\bigoplus_{I,J}\hom[\L{I}][\L{J}]
  \]
  (the precise grading structures on these Lagrangians are determined in
  \Cref{prop:grading_structures:LI}).
\end{notation}

The following proposition is an immediate consequence of \Cref{thm:Auroux}.

\begin{proposition}
  \label{prop:LI:generate}
  The collection $\set{\L{I}}[I\in\mychoose{n}{d}]$ generates the partially
  wrapped Fukaya category $\WF{n}{d}$ as an idempotent-complete triangulated
  $A_\infty$-category. In particular, there is a quasi-equivalence of
  triangulated $A_\infty$-categories
  \[
    \functor{\perf{\dgAus{n}{d}}}{\WF{n}{d},}[\simeq]\qquad\functor*{\Aus{n}{d}}{\bigoplus_I\L{I}.}\qedhere
  \]
\end{proposition}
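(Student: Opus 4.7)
The plan is to deduce the statement directly from Auroux's theorem (\Cref{thm:Auroux}) by verifying its geometric hypotheses for the specific configuration of arcs $L_{01},\dots,L_{0n}$ fixed earlier in the article, and then invoking the standard compact-generation principle for $A_\infty$-categories.

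First I would check that the arcs $L_{01},\dots,L_{0n}\subset\theDisk\setminus\Stops[][n]$, as already chosen in the discussion preceding \Cref{fig:Sigma5_Iyama}, satisfy the hypotheses of \Cref{thm:Auroux}: they are properly embedded, pairwise disjoint, have endpoints on $\partial\theDisk\setminus\Stops[][n]$, and their complement $\theDisk\setminus\bigcup_{i=1}^{n}L_{0i}$ is a disjoint union of (topological) disks, each containing exactly one stop of $\Stops[][n]$. This was verified explicitly in the subsection on $\WF{n}{1}\simeq\perf{\Aus{n}{1}}$, and it is the only geometric input required. \Cref{thm:Auroux}, applied with $\theSurface=\theDisk$ and $\Stops=\Stops[][n]$, then asserts that the product Lagrangians
\[
  \L{I}=\prod_{a=1}^{d}L_{0i_a}\subset\Sym{d}{\theDisk\setminus\Stops[][n]},\qquad I\in\mychoose{n}{d},
\]
classically generate the idempotent-complete triangulated $A_\infty$-category $\WF{n}{d}=\W(\Sym{d}{\theDisk},\Stops[d][n])$.

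The second assertion is then a formal consequence of compact generation: since $G=\bigoplus_{I\in\mychoose{n}{d}}\L{I}$ is a compact generator of $\WF{n}{d}$, the Yoneda-type functor $\perf{\End(G)}\to\WF{n}{d}$ sending the rank-one free module to $G$ is a quasi-equivalence of idempotent-complete triangulated $A_\infty$-categories. Unwinding the definition
\[
  \dgAus{n}{d}=\bigoplus_{I,J\in\mychoose{n}{d}}\hom[\L{I}][\L{J}]=\End[G]
\]
yields the desired quasi-equivalence $\perf{\dgAus{n}{d}}\xrightarrow{\simeq}\WF{n}{d}$, with the distinguished compact generator on the left-hand side sent to $\bigoplus_{I}\L{I}$.

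There is no serious obstacle in this step; the entire content of the proposition is already encoded in \Cref{thm:Auroux} together with the fact that the chosen $L_{0i}$ form an admissible arc system on $(\theDisk,\Stops[][n])$. The work in the remainder of the section will be to identify the $A_\infty$-algebra $\dgAus{n}{d}$ concretely (via the strands algebra $\dgBruhat{n}{d}$ of the previous subsection) and to endow the $\L{I}$ with grading structures so that this identification is compatible with the $\ZZ$-grading on $\WF{n}{d}$; that identification is the substantive content of \Cref{thm:WF-Auslander} and is logically independent of the present proposition, which is purely a statement about generation.
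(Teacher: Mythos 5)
Your proposal is correct and follows the same route as the paper, which simply records this proposition as an immediate consequence of \Cref{thm:Auroux} applied to the arc system $L_{01},\dots,L_{0n}$ (already checked to satisfy the hypotheses in the $d=1$ discussion), with the quasi-equivalence $\perf{\dgAus{n}{d}}\xrightarrow{\simeq}\WF{n}{d}$ being the standard consequence of having a classical generator. Your closing remark correctly separates the purely generation-theoretic content of this proposition from the identification of $\dgAus{n}{d}$ carried out in \Cref{thm:WF-Auslander}.
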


In this section we establish the following quasi-equivalence.

\begin{theorem}
  \label{thm:WF-Auslander}
  Let $n\geq d\geq 1$. There is a quasi-isomorphism of differential graded
  $\kk$-algebras
  \[
    \morphism{\dgAus{n}{d}}{\Aus{n}{d}.}[\simeq]
  \]
  Thus, there exists a quasi-equivalence of triangulated $A_\infty$-categories
  \[
    \functor{\perf{\Aus{n}{d}}}{\WF{n}{d},}[\simeq]\qquad\functor*{\Aus{n}{d}}{\bigoplus_I\L{I}.}\qedhere
\]
\end{theorem}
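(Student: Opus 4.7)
The plan is to prove the theorem by constructing an isomorphism of differential graded $\kk$-algebras $\dgAus{n}{d} \cong \dgBruhat{n}{d}$ and then invoking \Cref{thm:Bruhat_qiso_Auslander}, which already provides a quasi-isomorphism $\dgBruhat{n}{d} \simeq \Aus{n}{d}$. The quasi-equivalence of $A_\infty$-categories then follows formally from \Cref{prop:LI:generate} together with the standard fact that a tilting object in a pretriangulated $A_\infty$-category whose derived endomorphism algebra is quasi-isomorphic to an ordinary algebra $A$ induces a quasi-equivalence with $\perf{A}$.

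First I would set up the matching of underlying graded $\kk$-modules. Using the quasi-equivalence $\WF{n}{1} \simeq \perf{\Aus{n}{1}}$ already established (together with \eqref{eq:LiLj}), I would fix grading structures on the Lagrangians $\L{i} = L_{0i}$ so that the Reeb chord from $\L{i}$ to $\L{j}$ with $i \leq j$ has degree $0$. This choice induces grading structures on the product Lagrangians $\L{I} = \prod_a \L{i_a}$ in $\Sym{d}{\theDisk}$. Appealing to Auroux's perturbation scheme (his main calculation in~\cite{Aur10} together with \Cref{lemma:intervalIJ}), one obtains that $\hom[\L{I}][\L{J}]$ is a free graded $\kk$-module whose generators are in bijection with equivalence classes of strand diagrams $\varphi \colon I \to J$, concentrated in cohomological degrees $-\inv{\varphi}$. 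This is precisely the underlying graded $\kk$-module of the $\hom$-space in $\dgBruhat{n}{d}$.

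Next I would verify the algebra structure and the vanishing of higher $A_\infty$-products. Because every $\L{i}$ has one endpoint on the distinguished boundary component $0$ and the Reeb chords involved in the morphisms from $\L{I}$ to $\L{J}$ only travel counter-clockwise along $\partial\theDisk \setminus \Stops[][n]$, Auroux's analysis shows that no holomorphic polygons with more than three vertices contribute, so the $A_\infty$-structure collapses to a dg-algebra structure. The composition law---horizontal concatenation modulo the vanishing condition for double crossings---matches the one in $\dgBruhat{n}{d}$ by the triangle count of Auroux. Signs in the composition are absorbed by rescaling the strand diagram basis, so one gets an isomorphism of graded $\kk$-algebras $\dgAus{n}{d} \cong \dgBruhat{n}{d}$ up to this rescaling.

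The main obstacle, and the reason the paper introduced the strands algebra $\dgBruhat{n}{d}$ together with \Cref{lemma:dgBruhat:unique_differential}, is to identify the differential in arbitrary characteristic, where signs from orientations of moduli spaces of holomorphic bigons genuinely appear (Auroux's work covers characteristic $2$). The crucial point is that one need only check the following on single-crossing strand diagrams $\varphi \colon I \to J$: there is exactly one holomorphic bigon resolving the crossing, and it contributes $\pm e_{JI}$ to $\partial(\varphi)$. Granted this, \Cref{lemma:dgBruhat:unique_differential} guarantees that the resulting dg-algebra structure on $\dgAus{n}{d}$ is isomorphic to $\dgBruhat{n}{d}$, irrespective of the particular sign conventions. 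Once this identification is in place, composing with the quasi-isomorphism $\dgBruhat{n}{d} \simeq \Aus{n}{d}$ of \Cref{thm:Bruhat_qiso_Auslander} yields the required quasi-isomorphism $\dgAus{n}{d} \simeq \Aus{n}{d}$, and the induced quasi-equivalence $\perf{\Aus{n}{d}} \xrightarrow{\simeq} \WF{n}{d}$ sends $\Aus{n}{d}$ to $\bigoplus_I \L{I}$ by construction.
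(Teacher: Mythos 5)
Your overall architecture (identify $\dgAus{n}{d}$ with the strands algebra $\dgBruhat{n}{d}$ as a dg-algebra, then quote \Cref{thm:Bruhat_qiso_Auslander}) is genuinely different from what the paper does, and the difference is exactly where your argument has a gap. The paper never constructs a dg-algebra isomorphism $\dgAus{n}{d}\cong\dgBruhat{n}{d}$ (its corollary only records a \emph{zig-zag} of quasi-isomorphisms through $\Aus{n}{d}$), precisely because pinning down the signs of the Fukaya-categorical products and differential in all cohomological degrees is the hard, unresolved part in arbitrary characteristic. Your two load-bearing assertions --- ``signs in the composition are absorbed by rescaling the strand diagram basis'' and the appeal to \Cref{lemma:dgBruhat:unique_differential} ``irrespective of the particular sign conventions'' --- are not justified. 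The lemma requires, as input, an isomorphism of underlying \emph{graded algebras} together with the normalisation $\partial(\varphi)=+e_{JI}$ (not $\pm e_{JI}$) on single-crossing diagrams; to get from $\pm e_{JI}$ to $+e_{JI}$ you must rescale generators, and you must check that this rescaling is compatible with the rescaling you already performed to fix the product signs, and with the relation $\psi=-\varphi$ for reordered crossings. None of this is automatic: even in degree $0$ the paper needs the careful double induction of \Cref{prop::grading_structures:e_poset} to normalise the product signs, and it deliberately avoids ever doing the analogous normalisation in negative degrees.

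The paper's actual route sidesteps all of this. After fixing grading structures (\Cref{prop:grading_structures:LI}) so that $\hom[\L{I}][\L{J}]^\pi$ sits in degree $-\inv{\pi}$ (\Cref{prop:dgAus=dgBruhat:graded}), the only thing one knows about the Fukaya differential is that it decreases the number of inversions by one and squares to zero; this forces its matrix of signs to be a \emph{balanced signature} on the Hasse diagram of the Bruhat interval $\interval{I}{J}$. The rigidity statement \Cref{prop:Bruhat_acyclic_intervals}(2) --- any two balanced signatures yield isomorphic, hence acyclic, complexes --- then gives $H^*(\dgAus{n}{d})=H^0(\dgAus{n}{d})$ with no knowledge of the actual signs, and the identification $H^0(\dgAus{n}{d})\cong\Aus{n}{d}$ only needs the degree-$0$ product (normalised in \Cref{prop::grading_structures:e_poset}) together with the observation, as in \Cref{prop:dgBruhat:Auslander}, that $e_{JI}$ is a coboundary up to sign whenever some $j_a\geq i_{a+1}$. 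If you want to keep your strategy, you would have to actually prove the consistent-rescaling claim; otherwise you should replace the isomorphism $\dgAus{n}{d}\cong\dgBruhat{n}{d}$ by the balanced-signature argument, which is what makes the proof characteristic-free.
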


\begin{corollary}
  The differential graded $\kk$-algebras and $\dgAus{n}{d}$ and
  $\dgBruhat{n}{d}$ are related by a zig-zag of quasi-isomorphisms
  \[
    \begin{tikzcd}
      \dgAus{n}{d}\rar{\simeq}&\Aus{n}{d}&\dgBruhat{n}{d}\lar[swap]{\simeq}.
    \end{tikzcd}\qedhere
  \]
\end{corollary}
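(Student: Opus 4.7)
The plan is straightforward: this corollary is an immediate bookkeeping consequence of the two quasi-isomorphisms already established in the excerpt, and the proof amounts to concatenating them to form the displayed zig-zag.

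First I would invoke \Cref{thm:WF-Auslander}, which supplies a quasi-isomorphism of differential graded $\kk$-algebras $\dgAus{n}{d}\xrightarrow{\simeq}\Aus{n}{d}$; this provides the left-pointing arrow of the zig-zag. Next I would invoke \Cref{thm:Bruhat_qiso_Auslander}, which provides a quasi-isomorphism $\dgBruhat{n}{d}\xrightarrow{\simeq}H^0(\dgBruhat{n}{d})$ together with an identification $H^0(\dgBruhat{n}{d})\cong\Aus{n}{d}$ of ungraded $\kk$-algebras. Composing the quasi-isomorphism with this identification (viewed as a quasi-isomorphism of differential graded $\kk$-algebras where $\Aus{n}{d}$ is concentrated in degree $0$ with trivial differential) yields the right-pointing arrow $\dgBruhat{n}{d}\xrightarrow{\simeq}\Aus{n}{d}$.

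Placing these two quasi-isomorphisms side by side with common target $\Aus{n}{d}$ produces exactly the zig-zag displayed in the statement. There is no genuine obstacle here: all of the nontrivial content, namely the identification of the degree-zero cohomology with the higher Auslander algebra and the construction of the explicit model on the Floer-theoretic side, has already been carried out. The corollary should therefore be proved in one line, simply citing \Cref{thm:WF-Auslander} and \Cref{thm:Bruhat_qiso_Auslander} and observing that the two quasi-isomorphisms share their codomain.
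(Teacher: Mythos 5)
Your proposal is correct and follows exactly the paper's own argument: the corollary is deduced immediately by citing \Cref{thm:WF-Auslander} for the arrow $\dgAus{n}{d}\xrightarrow{\simeq}\Aus{n}{d}$ and \Cref{thm:Bruhat_qiso_Auslander} for the arrow $\dgBruhat{n}{d}\xrightarrow{\simeq}\Aus{n}{d}$, with common codomain $\Aus{n}{d}$. Nothing further is needed.
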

\begin{proof}
  This is an immediate consequence of \Cref{thm:Bruhat_qiso_Auslander} and
  \Cref{thm:WF-Auslander}.
\end{proof}

\subsubsection{Grading structures}
\label{subsubsec:grading_structures}

Our first task towards the proof of \Cref{thm:WF-Auslander} consist on
constructing suitable grading structures on the Lagrangians
$\set{\L{I}}[I\in\mychoose{n}{d}]$.

\begin{proposition}
  \label{prop::grading_structures:e_poset}
  Let $e\in\SymGrp{d}$ be the trivial permutation. The following statements
  hold:
  \begin{enumerate}
  \item Let $I,J\in\mychoose{n}{d}$. There are isomorphisms of (ungraded)
    $\kk$-modules
    \[
      \hom[\L{I}][\L{J}]^e\cong\begin{cases}
        \kk\cdot f_{JI}&\text{if }I\leq J;\\
        0&\text{otherwise}.
      \end{cases}
    \]
  \item There is an isomorphism of (ungraded) $\kk$-algebras between
    $\dgAus{n}{d}^0=\bigoplus_{I,J}\hom[\L{I}][\L{J}]^e$ and the incidence
    $\kk$-algebra of the poset $\mychoose{n}{d}$.\qedhere
  \end{enumerate}
\end{proposition}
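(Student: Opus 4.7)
The plan is to combine Auroux's tensor-product decomposition of $\hom[\L{I}][\L{J}]$ in the partially wrapped Fukaya category $\WF{n}{d}$, as recalled in the preliminaries, with the one-dimensional computation $\hom[L_{0i}][L_{0j}] \cong \kk(0)$ whenever $i \leq j$ that was recorded in \eqref{eq:LiLj}. First, I would fix grading structures on $L_{01},\ldots,L_{0n}$ so that \eqref{eq:LiLj} holds on the nose, and equip each product Lagrangian $\L{I} = \prod_{a=1}^d L_{0 i_a}$ with the induced product grading. With respect to these choices, Auroux's decomposition reads
\[
  \hom[\L{I}][\L{J}] \simeq \bigoplus_{\pi \in \SymGrp{d}} \hom[\L{I}][\L{J}]^\pi, \qquad \hom[\L{I}][\L{J}]^\pi = \bigotimes_{a=1}^d \hom[L_{0 i_a}][L_{0 j_{\pi(a)}}].
\]
Specialising to $\pi = e$ and applying \eqref{eq:LiLj} factor by factor, the $a$-th tensor factor is $\kk$ in degree $0$ precisely when $i_a \leq j_a$ and vanishes otherwise. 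Since the conjunction of these inequalities over all $a$ is exactly the defining condition $I \leq J$ in $\mychoose{n}{d}$, part~(1) follows, with $f_{JI}$ identified as the tensor product of the canonical Reeb-chord generators.

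For part~(2), I would argue that the summand $\hom[\L{I}][\L{J}]^\pi$ lies in cohomological degree $-\inv{\pi}$: each tensor factor contributes degree $0$, and the overall degree shift is governed by the crossings in a strand-diagram representation of $\pi$, in line with the description of the strands algebra $\dgBruhat{n}{d}$. Consequently only the trivial permutation summand contributes to the degree-zero part, so
\[
  \dgAus{n}{d}^0 = \bigoplus_{I,J} \hom[\L{I}][\L{J}]^e \cong \bigoplus_{I \leq J} \kk \cdot f_{JI}
\]
as a graded $\kk$-module. To upgrade this to an algebra isomorphism with the incidence algebra of $\mychoose{n}{d}$, I would verify that composition is computed factor-wise on tensor products: the product $f_{KJ'} \cdot f_{JI}$ reduces to the composition of Reeb-chord generators in each of the $d$ copies of $\WF{n}{1}$, and the one-dimensional analysis recalled in the preliminaries ensures that the composition yields $f_{KI}$ when $J = J'$ and vanishes otherwise, matching the incidence multiplication law exactly.

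The main obstacle is the degree bookkeeping: one must ensure that the product grading structure on $\L{I}$ is compatible with Auroux's perturbation scheme so that $\hom[\L{I}][\L{J}]^\pi$ lives in degree $-\inv{\pi}$, and in particular that no contributions from nontrivial permutations sneak into $\dgAus{n}{d}^0$. A careful check here is necessary to avoid circularity, since the full identification $\dgAus{n}{d} \simeq \dgBruhat{n}{d}$ will only be established later; at this stage it is enough to argue, directly from the tensor factor gradings and Auroux's explicit model, that the shift by $-\inv{\pi}$ is precisely what one obtains from counting crossings.
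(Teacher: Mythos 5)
Part~(1) of your proposal is fine and is exactly the paper's argument: tensor \eqref{eq:LiLj} over the $d$ factors of the $e$-summand and observe that the conjunction of the inequalities $i_a\leq j_a$ is precisely the relation $I\leq J$ in $\mychoose{n}{d}$.

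The genuine gap is in part~(2), at the sentence asserting that composition is computed factor-wise and ``yields $f_{KI}$ when $J=J'$ and vanishes otherwise''. Over an arbitrary commutative ring the counting of holomorphic triangles only gives $f_{KJ}\circ f_{JI}=\varepsilon^{J}_{KI}\,f_{KI}$ for some sign $\varepsilon^{J}_{KI}=\pm1$ coming from the orientations of the relevant moduli spaces; the one-dimensional analysis recalled in the preliminaries determines the additive structure but says nothing about these signs, and the paper explicitly flags them as the difficulty (the permutation decomposition is, moreover, not multiplicative in general). The entire content of the paper's proof of part~(2) is a normalisation argument showing that these signs can all be made $+1$: one rescales the generators $f_{JI}$ inductively on $d$ and $n$, using associativity together with the convex subposets $\mychoose{n-1}{d}$ and $\set{I\in\mychoose{n}{d}}[n\in I]\cong\mychoose{n-1}{d-1}$ and the extremal elements of $\mychoose{n}{d}$. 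Your proposal omits this step entirely, and without it the conclusion is only that $\dgAus{n}{d}^0$ is a ``signed'' twist of the incidence algebra. (Separately, the degree bookkeeping you devote attention to is not needed for this proposition: here $\dgAus{n}{d}^0$ is the sum of the $e$-components, the statement is about \emph{ungraded} modules and algebras, and the assertion that $\hom[\L{I}][\L{J}]^\pi$ sits in degree $-\inv{\pi}$ is only proved later, after the grading structures have been fixed using this very proposition --- so relying on it here would indeed be circular.)
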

\begin{proof}
  Recall that
  \[
    \hom[\L{I}][\L{J}]^e=\hom[L_{i_1}][L_{j_1}]\otimes\cdots\otimes\hom[L_{i_d}][L_{j_d}].
  \]
  The first claim follows immediately from \eqref{eq:LiLj} since the condition
  that $i_a\leq j_a$ for all $1\leq a\leq d$ is precisely the condition that
  $I\leq J$ in the poset $\mychoose{n}{d}$.

  We now prove the second claim. The isomorphism of (ungraded) $\kk$-modules
  \[
    \dgAus{n}{d}^0\cong\bigoplus_{I\leq J}\kk\cdot f_{JI}
  \]
  induces a product operation on the right-hand side of the form
  \[
    f_{KJ}\circ f_{JI}=\varepsilon_{KI}^Jf_{KI},
  \]
  where the sign $\varepsilon_{KI}^J=\pm1$ is in general difficult to determine
  as it is induced by the orientation of certain moduli spaces of holomorphic
  disks. We claim that, by possibly replacing some of the generators by their
  negatives, we can assume that all these signs are positive (note that this
  immediately proves the second statement in the proposition). We proceed
  inductively as follows. For $d=1$ and $n\geq d$ the poset $\mychoose{n}{d}$ is
  isomorphic to a chain with $n$ elements. It is clear that we can inductively
  choose signs on the generators $f_{i+1,i}\colon \set{i}\to \set{i+1}$ to
  ensure that all signs are positive in this case. Let now $d>1$ and suppose
  that we have proven the claim for all $d-1$ and $n\geq d-1$. Observe first
  that $\mychoose{n-1}{d}$ is a convex subset of $\mychoose{n}{d}$ (that is, a
  subset closed under the passage to closed intervals). Similarly, the map
  $I'\mapsto I'\cup\set{n}$ induces an isomorphism between $\mychoose{n-1}{d-1}$
  and the convex subset
  \[
    \mychoose{n}{d}_n=\set{I\in\mychoose{n}{d}}[n\in I]
  \]
  of $\mychoose{n}{d}$. Thus, by induction, we can assume that the product of
  generators indexed by elements of either of these subsets involves only
  positive signs (where we only allow for products of generators indexed by
  elements of the same convex subset). Let
  \[
    I_{min}=\set{1,\dots,d}\qquand K_{max}=\set{n-d+1,\dots,n-1,n}
  \]
  be the minimal and the maximal elements in $\mychoose{n}{d}$, respectively.
  For each pair $J<K$ such that $J\in\mychoose{n-1}{d}$ and
  $K\in\mychoose{n}{d}_n$, replacing the generator $f_{KJ}$ by its negative if
  necessary, we can assume that
  \[
    f_{KJ}\circ f_{JI_{min}}=f_{KI_{min}}
  \]
  (note that if $J=I_{min}$ this imposes no condition on the generator
  $f_{KJ}=f_{KI_{min}}$). Let $J< K$ be as above and $I\leq J$ (so in
  particular $I\in\mychoose{n-1}{d}$); we claim that
  \[
    f_{KJ}\circ f_{JI}=f_{KI}.
  \]
  Indeed,
  \begin{align*}
    f_{KI_{min}}&=f_{KJ}\circ f_{JI_{min}}\\
                    &=f_{KJ}\circ (f_{JI}\circ f_{II_{min}})\\
                    &=(f_{KJ}\circ f_{JI})\circ f_{II_{min}}\\
                    &=\varepsilon_{KI}^Jf_{KI}\circ f_{II_{min}}=\varepsilon_{KI}^Jf_{KI},
  \end{align*}
  and therefore $\varepsilon_{KI}^J=1$. Now, for each $K\in\mychoose{n}{d}_n$,
  replacing the generator $f_{KI_{min}}$ by its negative if necessary, we
  can assume that
  \[
    f_{K_{max} K}\circ f_{KI_{min}}=f_{K_{max}I_{min}}.
  \]
  Let $K\leq K'$ (so in particular $K'\in\mychoose{n}{d}_n$); we claim that
  \[
    f_{K'K}\circ f_{KI_{min}}=f_{K'I_{min}}.
  \]
  Indeed,
  \begin{align*}
    f_{K_{max}I_{min}}&=f_{K_{max}K}\circ f_{KI_{min}}\\
                      &=(f_{K_{max}K'}\circ f_{K'K})\circ f_{KI_{min}},\\    
                      &=f_{K_{max}K'}\circ (f_{K'K}\circ f_{KI_{min}}),\\
                      &=\varepsilon_{K'I_{min}}^Kf_{K_{max}K'}\circ f_{K'I_{min}}=\varepsilon_{K'I_{min}}^Kf_{K_{max}I_{min}},
  \end{align*}
  and therefore $\varepsilon_{K'I_{min}}^K=1$. We can then proceed similarly as
  above to show that
  \[
    f_{K'K}\circ f_{KJ}=f_{K'J}.
  \]
  for all $J<K\leq K'$ such that $J\in\mychoose{n-1}{d-1}$ and
  $K,K'\in\mychoose{n}{d}_n$, which is what we needed to show. This proves the
  claim for all $n\geq d$ and, by induction on $d$, the general case follows.
\end{proof}

\begin{proposition}
  \label{prop:grading_structures:LI}
  Let $e\in\SymGrp{d}$ be the trivial permutation. Up to simultaneous shift,
  there exist unique grading structures on the Lagrangians
  $\set{\L{I}}[I\in\mychoose{n}{d}]$ such that the graded $\kk$-algebra
  \[
    \bigoplus_{I,J}\hom[\L{I}][\L{J}]^e=\bigoplus_{I\leq J}\hom[\L{I}][\L{J}]^e
  \]
  is concentrated in degree $0$.
\end{proposition}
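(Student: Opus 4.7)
The plan is to start from an arbitrary initial choice of grading structures on the Lagrangians $\set{\L{I}}[I\in\mychoose{n}{d}]$ and modify each by an integer shift so as to force the desired concentration in degree zero. Since every $\L{I}$ is a product of arcs in the contractible disk $\theDisk$, it is contractible, and its grading structures form a $\ZZ$-torsor; the space of admissible modifications is therefore $\ZZ^{\binom{n}{d}}$, acting component-wise by shift.

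By \Cref{prop::grading_structures:e_poset}, for every pair $I\leq J$ in $\mychoose{n}{d}$ the graded $\kk$-module $\hom[\L{I}][\L{J}]^e$ is free of rank one, generated by a canonical morphism $f_{JI}$ of some well-defined degree $d_{JI}\in\ZZ$. The same proposition furnishes, after possibly rescaling generators by signs, the multiplication rule $f_{KJ}\circ f_{JI}=f_{KI}$ for all $I\leq J\leq K$. Since composition of morphisms is additive with respect to degree and the product is non-zero, this yields the cocycle identity
\[
  d_{KI}=d_{KJ}+d_{JI}\qquad\text{for all }I\leq J\leq K.
\]

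For existence, fix the minimum element $I_{\min}=\set{1,\dots,d}$ of the poset $\mychoose{n}{d}$ and set $t_I\coloneqq d_{II_{\min}}$. The cocycle identity then gives $t_J-t_I=d_{JI}$ for every $I\leq J$. Shifting each $\L{I}$ by the appropriate integer shift encoded by $t_I$ therefore produces grading structures under which every morphism $f_{JI}$ sits in degree zero, so $\bigoplus_{I,J}\hom[\L{I}][\L{J}]^e$ becomes concentrated in degree zero, as required.

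For uniqueness up to a simultaneous shift, suppose that two systems of integer shifts $\set{n_I}$ and $\set{n_I'}$ both realise the concentration property. Their difference $u_I\coloneqq n_I-n_I'$ must then satisfy $u_J-u_I=0$ for every covering relation $I\leq J$ in the Hasse diagram of $\mychoose{n}{d}$. Because $\mychoose{n}{d}$ admits a minimum element, its Hasse diagram is connected, so $u$ is constant; equivalently, the two systems differ by a global shift. The only substantive ingredient in the whole argument is the cocycle identity, which in turn is an immediate consequence of \Cref{prop::grading_structures:e_poset}; everything else is elementary bookkeeping on the poset $\mychoose{n}{d}$.
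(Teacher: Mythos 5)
Your proof is correct and follows essentially the same route as the paper: both arguments rest on \Cref{prop::grading_structures:e_poset} (rank-one hom modules with non-vanishing compositions) and normalise the grading structures against the minimal element $\set{1,\dots,d}$ of $\mychoose{n}{d}$, with uniqueness coming from the $\ZZ$-torsor of gradings on a single Lagrangian together with connectivity of the poset. Your packaging of the degree bookkeeping as a cocycle identity $d_{KI}=d_{KJ}+d_{JI}$ is just a tidier way of stating what the paper's proof does implicitly.
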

\begin{proof}
  According to \Cref{prop::grading_structures:e_poset} there is an isomorphism
  of (ungraded) $\kk$-algebras between $\bigoplus_{I,J}\hom[\L{I}][\L{J}]^e$ and
  the incidence $\kk$-algebra of the poset $\mychoose{n}{d}$. In particular, for
  subsets ${I\leq J\leq K}$ in $\mychoose{n}{d}$, the composition map
  \[
    \morphism{\hom[\L{K}][\L{J}]^e\otimes\hom[\L{I}][\L{J}]^e}{\hom[\L{I}][\L{K}]^e}[\simeq]
  \]
  is an isomorphism of graded $\kk$-modules (after choosing \emph{arbitrary}
  grading structures on the corresponding Lagrangians).
  
  Fix an arbitrary grading structure on the Lagrangian $\L{1\cdots d}$. For
  $I\in\mychoose{n}{d}$ equip the Lagrangian $\L{I}$ with the unique grading
  structure such that the graded $\kk$-module
  \[
    \hom[\L{1\cdots d}][\L{I}]\cong\kk
  \]
  is concentrated in degree $0$ (notice that $\set{1,\dots,d}$ is the smallest
  element in the poset $\mychoose{n}{d}$). We claim that this choice of grading
  structures has the desired property. Indeed, if $J\in\mychoose{n}{d}$ is such
  that $J\leq I$, then the composition map
  \[
    \morphism{\hom[\L{J}][\L{I}]^e\otimes\hom[\L{01\cdots
        d}][\L{J}]^e}{\hom[\L{1\cdots d}][\L{I}]^e}[\simeq]
  \]
  is an isomorphism of graded $\kk$-modules. Since, by construction, the graded
  $\kk$-modules
  \[
    \hom[\L{1\cdots d}][\L{J}]^e\qquand\hom[\L{1\cdots d}][\L{I}]^e
  \]
  are concentrated in degree $0$, the graded $\kk$-module $\hom[\L{J}][\L{I}]^e$
  must be concentrated in degree $0$ as well. Since the shift functor on
  $\WF{n}{d}$ induces a free and transitive action on the set of grading
  structures on the Lagrangian $\L{1\cdots d}$, the above argument also shows
  that, up to global shift, there is a unique choice of grading structures with
  the required property.
\end{proof}

\subsubsection{The proof of \Cref{thm:WF-Auslander}}

\begin{notation}
  Once and for all, we fix grading structures on the Lagrangians
  $\set{\L{I}}[I\in\mychoose{n}{d}]$ as in \Cref{prop:grading_structures:LI}.
\end{notation}

\begin{proposition}
  \label{prop:dgAus=dgBruhat:graded}
  Let $I,J\in\mychoose{n}{d}$. There are isomorphisms of graded $\kk$-modules
  \[
    \hom[\L{I}][\L{J}]\cong\begin{cases}
      \hom[I][J]&\text{if }I\leq J,\\
      0&\text{otherwise}.
    \end{cases}
  \]
  In particular, the differential graded $\kk$-algebra $\dgAus{n}{d}$ has its
  cohomology concentrated in degree $0$:
  \[
    H^*(\dgAus{n}{d})=H^0(\dgAus{n}{d}).\qedhere
  \]
\end{proposition}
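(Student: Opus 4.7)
The plan is to combine Auroux's chain-level decomposition with the normalised grading structures of \Cref{prop:grading_structures:LI}, and then read off both the graded-module isomorphism and the cohomology concentration.

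As a first step, I invoke the decomposition recalled in the preliminaries,
\[
  \hom[\L{I}][\L{J}] \;\simeq\; \bigoplus_{\pi \in \SymGrp{d}} \hom[\L{I}][\L{J}]^\pi, \qquad \hom[\L{I}][\L{J}]^\pi = \bigotimes_{a=1}^d \hom[L_{0,i_a}][L_{0,j_{\pi(a)}}].
\]
By \eqref{eq:LiLj}, each tensor factor is a rank-one free $\kk$-module concentrated in a single degree exactly when $i_a \leq j_{\pi(a)}$, and vanishes otherwise. Hence the $\pi$-summand is free of rank one when $i_a \leq j_{\pi(a)}$ for every $a$, and zero otherwise; by \Cref{lemma:intervalIJ} the former condition is equivalent to $\pi \in \interval{I}{J}$. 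This immediately yields the vanishing $\hom[\L{I}][\L{J}] = 0$ for $I \not\leq J$, and in the case $I \leq J$ produces a rank-one summand indexed by each $\pi \in \interval{I}{J}$.

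What remains is to identify the cohomological degree of $\hom[\L{I}][\L{J}]^\pi$. The normalisation in \Cref{prop:grading_structures:LI} anchors the $\pi = e$ summand in degree $0$. The expected answer in general is $-\inv{\pi}$: each simple transposition in a reduced expression for $\pi$ corresponds to a crossing in Auroux's strand-diagram model and contributes a Maslov shift of $-1$. I expect this grading computation to be the main obstacle, since it requires carefully tracking how the global grading on $\Sym{d}{\theDisk}$ distributes across Auroux's splitting. A systematic route is to argue by induction on $\inv{\pi}$: for every covering relation $\pi' \triangleleft \pi$ in $\interval{I}{J}$, factor the associated generator through an auxiliary multi-subset $K$ in $\mychoose{n}{d}$ so that one of the factors is a simple-transposition-type generator, whose degree is controlled by the one-dimensional computation in $\WF{n}{1}$ combined with the product structure on $\Sym{d}{\theDisk}$ and the rigidity supplied by \Cref{prop::grading_structures:e_poset,prop:grading_structures:LI}.

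Granting this grading computation, a rank-by-rank comparison produces the desired isomorphism of graded $\kk$-modules
\[
  \hom[\L{I}][\L{J}] \;\cong\; C\interval{I}{J} \;\cong\; \hom[I][J],
\]
where the latter identification is \Cref{lemma:Khovanov}. For the concluding statement $H^*(\dgAus{n}{d}) = H^0(\dgAus{n}{d})$, vanishing of positive-degree cohomology is automatic since the graded module sits in non-positive degrees; for negative degrees, I invoke an analogue of \Cref{lemma:dgBruhat:unique_differential} stating that the dg-algebra structure on $\dgAus{n}{d}$ is pinned down by its action on degree $-1$ generators. Matching these generators with those of $\dgBruhat{n}{d}$ (after possibly adjusting signs) transports the negative-degree acyclicity of each $C\interval{I}{J}$ from \Cref{prop:Bruhat_acyclic_intervals} across to $\hom[\L{I}][\L{J}]$, completing the proof.
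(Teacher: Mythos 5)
Your first and last steps track the paper: the decomposition $\hom[\L{I}][\L{J}]\simeq\bigoplus_{\pi\in\SymGrp{d}}\hom[\L{I}][\L{J}]^\pi$, the rank count via \eqref{eq:LiLj} and \Cref{lemma:intervalIJ}, and the final appeal to \Cref{lemma:Khovanov} and \Cref{prop:Bruhat_acyclic_intervals} are all as in the paper's proof. The genuine gap is the middle step, which you rightly flag as the main obstacle but then propose to close by a mechanism that cannot work. You want to place $\hom[\L{I}][\L{J}]^\pi$ in degree $-\inv{\pi}$ by factoring generators through auxiliary objects $\L{K}$ so as to reduce to simple-transposition-type generators, whose degree you claim is ``controlled by the one-dimensional computation in $\WF{n}{1}$ combined with the product structure''. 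But the one-dimensional computation places \emph{every} morphism $L_{0i}\to L_{0j}$ in degree $0$, so the naive tensor grading would put every summand $\hom[\L{I}][\L{J}]^\pi$ in degree $0$ as well; compatibility with composition only says that degrees add, which is perfectly consistent with this (wrong) all-degree-zero answer. The shift of $-1$ per crossing is an honest feature of the symmetric product that is invisible from the factors, and the reduction is moreover circular: the degree of a simple-transposition generator is exactly the $\inv{\pi}=1$ case of the claim, and neither \Cref{prop::grading_structures:e_poset} nor \Cref{prop:grading_structures:LI} constrains it, since those results only normalise the $\pi=e$ components.

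The missing input is the \emph{differential}, not the product. Auroux's count of holomorphic bigons shows that $\partial$ carries $\hom[\L{I}][\L{J}]^\pi$ into $\bigoplus_{\pi'\triangleleft\pi}\hom[\L{I}][\L{J}]^{\pi'}$ with coefficients $\pm1$ exactly on covering relations; since the differential is homogeneous of a fixed degree and $\hom[\L{I}][\L{J}]^e$ is pinned to degree $0$ by \Cref{prop:grading_structures:LI}, induction on $\inv{\pi}$ along covering relations forces $\hom[\L{I}][\L{J}]^\pi$ into degree $-\inv{\pi}$. This is precisely how the paper argues. Once that is in place, your concluding step also simplifies: there is no need for an analogue of \Cref{lemma:dgBruhat:unique_differential} or for matching multiplicative generators with those of $\dgBruhat{n}{d}$ (which would require controlling products and signs you have not computed). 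The relation $\partial^2=0$ already forces the signs occurring in the differential to form a balanced signature on the Hasse diagram of $\interval{I}{J}$, so \Cref{prop:Bruhat_acyclic_intervals} applies directly to identify $\hom[\L{I}][\L{J}]$ with $C\interval{I}{J}$, which is acyclic unless $\pi^{JI}_0=e$, in which case it is $\kk$ concentrated in degree $0$.
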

\begin{proof}
  By \eqref{eq:LiLj} and \Cref{lemma:Khovanov} there is an
  isomorphism of (ungraded) $\kk$-modules
  \[
    \bigoplus_{\pi\in\SymGrp{d}}\hom[\L{I}][\L{J}]^\pi=\bigoplus_{\pi\in\interval{I}{J}}\hom[\L{I}][\L{J}]^\pi\cong\bigoplus_{\pi\in\interval{I}{J}}\kk\cdot\pi.
  \]
  Moreover, since the Lagrangians $\set{\L{I}}[I\in\mychoose{n}{d}]$ all have an
  endpoint in the same boundary component, the results in \cite{Aur10} show that
  the differential on a morphism $f\in\hom[\L{I}][\L{J}]$ lies in the
  $\kk$-module
  \[
    \bigoplus_{\inv{\pi}=\inv{\pi'}+1}\hom[\L{I}][\L{J}]^{\pi'}.
  \]
  Since, by \Cref{prop:grading_structures:LI}, the graded $\kk$-module
  $\hom[\L{I}][\L{J}]^e$ is concentrated in degree $0$, arguing by induction on
  the number of inversions of $\pi$ (and taking into account that the
  differential is a morphism of degree $-1$) we conclude that
  $\hom[\L{I}][\L{J}]^\pi$ is concentrated in degree $-\inv{\pi}$. Thus, the
  above isomorphism can be promoted to an isomorphism of graded $\kk$-modules.
  The claim then follows since the Bruhat interval $\interval{I}{J}$ is
  non-empty if and only if $I\leq J$, see \Cref{lemma:intervalIJ}. The proof of
  \Cref{prop:dgBruhat:acyclic} applies verbatim to show that $\dgAus{n}{d}$ has
  its cohomology concentrated in degree $0$.
\end{proof}

We now give the proof of \Cref{thm:WF-Auslander}

\begin{proof}[Proof of \Cref{thm:WF-Auslander}.]  
  By \Cref{prop:dgAus=dgBruhat:graded} the differential graded $\kk$-algebra
  $\dgAus{n}{d}$ has its cohomology concentrated in degree $0$:
  \[
    H^*(\dgAus{n}{d})\cong H^0(\dgAus{n}{d}).
  \]
  In particular, the canonical map
  \[
    \morphism{\dgAus{n}{d}}{H^0(\dgAus{n}{d})}[\simeq]
  \]
  is a quasi-isomorphism. By \Cref{prop::grading_structures:e_poset} there is
  an isomorphism of ungraded $\kk$-algebras between $\dgAus{n}{d}^0$ and the
  incidence $\kk$-algebra of the poset $\mychoose{n}{d}$. The same argument used
  in the proof of \Cref{prop:dgBruhat:Auslander} then shows that there is an
  isomorphisms of ungraded $\kk$-algebras between $H^0(\dgAus{n}{d})$ and the
  higher Auslander algebra $\Aus{n}{d}$. The existence of the required
  quasi-equivalence
  \[
    \functor{\perf{\Aus{n}{d}}}{\WF{n}{d},}[\simeq]\qquad\functor*{\Aus{n}{n-d}}{\bigoplus_I\L{I}}
  \]
  follows from \Cref{prop:LI:generate}.
\end{proof}

\subsection{The quasi-equivalence $\WF{n}{d}\simeq\perf{\Aus*{n}{d}}$}
\label{subsec:WF:KD}

\begin{notation}
  For $1\leq i\leq n$ we introduce the notation
  \[
    \L*{i}\coloneqq L_{i-1,i}.
  \]
  More generally, for $I\in\mychoose{n}{d}$ we introduce the Lagrangian
  \[
    \L*{I}\coloneqq\prod_{a=1}^d\L*{i_a}=\prod_{a=1}^dL_{i_a-1,i_a}%
  \]
  in $\Sym{d}{\theDisk\setminus\Stops[][n]}$. We equip this Lagrangian with
  the unique grading structure such that the apparent morphism
  \[
    \morphism{\displaystyle\L{I}=\prod_{a=1}^dL_{0,i_a}}{\displaystyle\prod_{a=1}^dL_{i_a-1,i_a}=\L*{I}},
  \]
  given by the Reeb chords $L_{0,i_a}\to L_{i_a-1,i_a}$ induced by the Reeb
  flow along the boundary component of $\theDisk\setminus\Stops[][n]$ labelled
  $i_a$, is a morphism of degree $0$. Finally, we define the differential
  graded $\kk$-algebra
  \[
    \dgAus*{n}{d}\coloneqq\bigoplus_{J,I}\hom[\L*{J}][\L*{I}].\qedhere
  \]
\end{notation}

The following proposition is an immediate consequence of \Cref{thm:Auroux}.

\begin{proposition}
  \label{prop:LIv:generate}
  The collection $\set{\L*{I}}[I\in\mychoose{n}{d}]$ generates the partially
  wrapped Fukaya category $\WF{n}{d}$ as an idempotent-complete triangulated
  $A_\infty$-category. Thus, there exists a quasi-equivalence of triangulated
  $A_\infty$-categories
  \[
    \functor{\perf{\dgAus*{n}{d}}}{\WF{n}{d},}[\simeq]\qquad\functor*{\dgAus*{n}{d}}{\bigoplus_I\L*{I}.}\qedhere
  \]
\end{proposition}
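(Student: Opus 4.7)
The plan is to deduce the proposition directly from Auroux's generation result (\Cref{thm:Auroux}) applied to the specific collection of arcs $L^\vee_1=L_{01},\,L^\vee_2=L_{12},\dots,L^\vee_n=L_{n-1,n}$ on the disk $\theDisk$ with stops $\Stops[][n]$. The bulk of the work is therefore verifying that these arcs satisfy the hypotheses of \Cref{thm:Auroux}; this has essentially been done already in the discussion preceding \Cref{fig:Sigma5_Koszul}, and the plan is just to recall that verification.

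First, I would recall the choice of embedded representatives of $L_{i-1,i}$ for $1\leq i\leq n$, taken to be pairwise disjoint properly embedded arcs with endpoints on the components $i-1$ and $i$ of $\partial\theDisk\setminus\Stops[][n]$ (see \Cref{fig:Sigma5_Koszul}). A direct combinatorial inspection shows that the complement $\theDisk\setminus(L^\vee_1\cup\cdots\cup L^\vee_n)$ is a disjoint union of $n+1$ topological disks, each of which contains exactly one stop $p_i\in\Stops[][n]$. This is precisely the hypothesis of \Cref{thm:Auroux}, so the collection of product Lagrangians
\[
	\L*{I}=\prod_{a=1}^d L_{i_a-1,i_a},\qquad I\in\mychoose{n}{d},
\]
generates $\WF{n}{d}=\W(\Sym{d}{\theDisk},\Stops[d][n])$ as an idempotent-complete triangulated $A_\infty$-category. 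The grading structures chosen on the $\L*{I}$ (as specified in the preceding notation) are unique up to shift since each underlying Lagrangian is contractible, and the generation statement is insensitive to this choice.

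From generation, the quasi-equivalence follows by a standard Morita-type argument. Namely, since $\WF{n}{d}$ is an idempotent-complete pretriangulated $A_\infty$-category classically generated by the finite collection $\set{\L*{I}}$, the $A_\infty$-Yoneda functor associated to the formal direct sum $\bigoplus_I\L*{I}$ sending $\dgAus*{n}{d}=\bigoplus_{J,I}\hom[\L*{J}][\L*{I}]$ to $\WF{n}{d}$ extends to a quasi-equivalence
\[
	\functor{\perf{\dgAus*{n}{d}}}{\WF{n}{d}}[\simeq]
\]
of triangulated $A_\infty$-categories by the usual Keller-type recognition principle for perfect derived categories of dg/$A_\infty$-algebras.

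There is no serious obstacle in this proof: the only nontrivial input is \Cref{thm:Auroux}, which has already been stated and whose hypotheses are a combinatorial check for the specific arcs $L_{i-1,i}$; the passage from generation to the quasi-equivalence with $\perf{\dgAus*{n}{d}}$ is entirely formal. The content of the subsequent sections, by contrast, will be the explicit identification of $\dgAus*{n}{d}$ with the Koszul dual $\Aus*{n}{d}$, which is where the real work lies.
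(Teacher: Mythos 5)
Your proposal is correct and follows exactly the route the paper takes: the paper states this proposition as an immediate consequence of \Cref{thm:Auroux}, relying on the earlier observation (illustrated in \Cref{fig:Sigma5_Koszul}) that the arcs $L_{i-1,i}$ are pairwise disjoint and cut $\theDisk$ into disks each containing exactly one stop, with the passage from classical generation to the quasi-equivalence with $\perf{\dgAus*{n}{d}}$ being the standard formal Morita/Yoneda argument. You have simply written out the details that the paper leaves implicit.
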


In this section we establish the following equivalences.

\begin{theorem}
  \label{thm:WF-Auslander:Koszul}
  Let $n\geq d\geq 1$. There are an isomorphism of $A_\infty$-algebras
  $\dgAus*{n}{d}\cong\Aus*{n}{d}$ and, consequently, a quasi-equivalence of
  triangulated $A_\infty$-categories
  \[
    \functor{\perf{\Aus*{n}{d}}}{\WF{n}{d},}[\simeq]\qquad\functor*{\Aus*{n}{d}}{\bigoplus_I\L*{I},}
  \]
  where $\Aus*{n}{d}$ denotes the Koszul dual of the (augmented) $\kk$-algebra
  $\Aus{n}{d}$. Moreover, there is a commutative diagram
  \[
    \begin{tikzcd}
      \perf{\Aus*{n}{d}}\drar\ar[leftrightarrow]{rr}{\text{\tiny{Koszul Duality}}}&&\perf{\Aus{n}{d}}\dlar\\
      &\WF{n}{d}
    \end{tikzcd}
  \]
  of quasi-equivalences between triangulated $A_\infty$-categories.
\end{theorem}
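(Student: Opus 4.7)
The plan is to mirror the proof of \Cref{thm:WF-Auslander}: first compute the underlying graded $\kk$-module structure of $\dgAus*{n}{d}$ via Auroux's decomposition and the Lagrangian hom computation \eqref{eq:hom_vee-alt}, then verify that both the differential and all higher $A_\infty$-products on $\dgAus*{n}{d}$ vanish so that it is in fact a graded $\kk$-algebra, and finally identify this graded algebra with the Koszul dual $\Aus*{n}{d}$. The commutativity of the triangle will then follow from derived Koszul duality applied to the homologically smooth and proper $\kk$-algebra $\Aus{n}{d}$.

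For the graded structure, I would exploit Auroux's decomposition
\[
  \hom[\L*{J}][\L*{I}]\cong\bigoplus_{\pi\in\SymGrp{d}}\hom[\L*{J}][\L*{I}]^\pi,
\]
whose summands are tensor products of the one-variable spaces $\hom[L_{j_a-1,j_a}][L_{i_{\pi(a)}-1,i_{\pi(a)}}]$. By \eqref{eq:hom_vee-alt} each such factor is at most one-dimensional. The crucial distinction from the earlier analysis is that the grading on each $\L*{I}$ is now fixed by declaring the natural Reeb chord $\L{I}\to\L*{I}$ to have degree $0$; under this new convention the individual arrows $L_{i,i+1}\to L_{i-1,i}$ acquire degree $1$, and a bookkeeping argument parallel to that of \Cref{prop::grading_structures:e_poset,prop:dgAus=dgBruhat:graded} should identify the resulting graded $\kk$-module with the underlying graded $\kk$-module of $\Aus*{n}{d}$, which by construction is generated in cohomological degrees $0$ and $1$.

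The principal obstacle is to upgrade this graded identification to an \emph{isomorphism} (not merely a quasi-isomorphism) of $A_\infty$-algebras, since generators now occupy two cohomological degrees and the degree-counting argument used in \Cref{prop:dgAus=dgBruhat:graded} to make the differential visible on the nose no longer forces it to vanish. My approach is to invoke Auroux's perturbation scheme for product Lagrangians in $\Sym{d}{\theDisk}$, which forbids contributions from holomorphic $k$-gons with $k\geq 3$ and kills all higher $A_\infty$-products outright; the remaining differential is then computed by holomorphic bigons, and a sign-normalisation argument modelled on \Cref{prop::grading_structures:e_poset}, combined with the fact that the degree-$0$ part of $\Aus*{n}{d}$ is semisimple (so there is no room for a differential issuing from degree $0$), should force the differential to vanish on a suitable basis. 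Combined with the graded identification, this yields the required isomorphism $\dgAus*{n}{d}\cong\Aus*{n}{d}$ of $A_\infty$-algebras, and the quasi-equivalence $\perf{\Aus*{n}{d}}\simeq\WF{n}{d}$ then follows from \Cref{prop:LIv:generate}.

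For the Koszul-duality triangle, once both $\perf{\Aus{n}{d}}$ and $\perf{\Aus*{n}{d}}$ have been identified with $\WF{n}{d}$, commutativity is essentially formal: the $\kk$-algebra $\Aus{n}{d}$ is homologically smooth and proper, so derived Koszul duality produces a canonical equivalence $\perf{\Aus{n}{d}}\simeq\perf{\Aus*{n}{d}}$. Concretely, this equivalence is realised by the bimodule of morphisms from $\bigoplus_I \L{I}$ to $\bigoplus_I \L*{I}$ inside $\WF{n}{d}$, generated by the degree-zero Reeb chords $L_{0,i_a}\to L_{i_a-1,i_a}$ used to fix the grading on the $\L*{I}$; identifying this bimodule with the standard Koszul-duality kernel closes the diagram.
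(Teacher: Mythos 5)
Your computations of $\hom[\L*{J}][\L*{I}]$ and the vanishing of the differential and higher products are on the right track, but the central step of your plan --- ``identify this graded algebra with the Koszul dual $\Aus*{n}{d}$'' by comparing it against a presentation of $\Aus*{n}{d}$ that is ``generated in cohomological degrees $0$ and $1$ by construction'' --- has no comparison target: $\Aus*{n}{d}$ is defined abstractly as the derived endomorphism algebra of the augmentation module $\bigoplus_I\kk\cdot f_{II}$, and to produce an explicit presentation of it you would have to compute $\operatorname{Ext}^*_{\Aus{n}{d}}$ of the simples, their Yoneda products, \emph{and} the formality of the resulting $A_\infty$-structure, none of which your proposal supplies (in the paper these facts are \emph{outputs} of the Fukaya-side computation, not inputs). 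The paper's actual mechanism is different and avoids this entirely: \Cref{prop:Koszul}, via \Cref{lemma:Koszul:orthogonal}, establishes the orthogonality relations
\[
  \hom[\L{I}][\L*{J}]\cong\begin{cases}\kk(0)&\text{if }I=J,\\0&\text{otherwise,}\end{cases}
\]
which, together with the smoothness and properness of $\Aus{n}{d}$ (\Cref{lemma:smooth_and_proper}), identify $\bigoplus_I\L*{I}$ with the image of the augmentation module under the equivalence $\perf{\Aus{n}{d}}\simeq\WF{n}{d}$ already provided by \Cref{thm:WF-Auslander}; after that, $\dgAus*{n}{d}$ is \emph{by definition} a model for $\Aus*{n}{d}$, and the single-degree concentration of each $\hom[\L*{J}][\L*{I}]$ shows this model is minimal, whence the claimed isomorphism. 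The one sentence of your proposal that touches this mechanism --- ``identifying this bimodule with the standard Koszul-duality kernel'' --- is precisely \Cref{lemma:Koszul:orthogonal}, but you relegate it to the commutativity of the triangle and assert it without proof; it must be promoted to the centre of the argument and proved (the proof is a tensor-factor analysis showing $\hom[\L{i}][\L*{j}]=0$ unless $i=j$).

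Two further corrections. First, once each $\hom[\L*{J}][\L*{I}]$ is known to be concentrated in the single degree $\rk{J}-\rk{I}$, the vanishing of $\mu^k$ for all $k\neq2$ is automatic for degree reasons ($\mu^k$ has degree $2-k$ while the degrees of composable morphisms already add up to the degree in which the target hom is concentrated); you need neither Auroux's perturbation scheme nor any semisimplicity of the degree-zero part, and your worry that the degree count ``no longer forces the differential to vanish'' is backwards --- the situation here is strictly better than in \Cref{prop:dgAus=dgBruhat:graded}, where the differential genuinely does not vanish. Second, establishing that concentration is not mere bookkeeping: with the gradings on the $\L*{I}$ fixed by declaring the chords $\L{I}\to\L*{I}$ to have degree $0$, showing that the elementary morphism $\L*{J}\to\L*{I}$ with $J=(I\setminus\set{i})\cup\set{i+1}$ has degree exactly $1$ requires transporting degrees across a morphism of exact triangles as in \Cref{lemma:Auroux}, and the general case then follows by factoring $\L*{J}\to\L*{I}$ into $\rk{J}-\rk{I}$ such elementary steps; this is the content of \Cref{lemma:homIJ:rk} and should be made explicit in your argument.
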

\begin{proof}
  By \Cref{thm:WF-Auslander} and \Cref{prop:LIv:generate,prop:LI:generate}, it
  is enough to show that the collections $\set{\L{I}}[I\in\mychoose{n}{d}]$ and
  $\set{\L*{I}}[I\in\mychoose{n}{d}]$ are Koszul dual of each other; This is the
  content of \Cref{prop:Koszul} below. Indeed a theorem of Keller concerning the
  derived Morita equivalence between Koszul dual differential graded
  $\kk$-algebras which are homologically smooth and proper over $\kk$ yields the
  desired quasi-equivalences, see Section~10.5 in \cite{Kel94}.
\end{proof}

\subsubsection{Standard resolutions in $\WF{n}{d}$}
\label{sec:resolutions}

We recall the existence of certain exact triangles in $\WF{n}{d}$, see Lemma~5.2
in \cite{Aur10} for details.

\begin{lemma}[Auroux]
  \label{lemma:Auroux}
  Let $0\leq i<j<k\leq n$. The following statements hold (with respect to a
  suitable choice of grading structures on the corresponding Lagrangians):
  \begin{enumerate}
  \item\label{it:Auroux:triangle:1} There is an exact triangle
    \[
      \begin{tikzcd}[column sep=small]
        L_{ij}\rar&L_{ik}\rar&L_{jk}\rar&L_{ij}[1]
      \end{tikzcd}
    \]
    in the partially wrapped Fukaya category $\WF{n}{1}$.
  \item\label{it:Auroux:triangle} More generally, let $L$ be the product of $d-1$ pairwise disjoint
    properly embedded arcs in $\theDisk\setminus\Stops[][n]$ which are not
    homotopic to any of the arcs $L_{ij}$, $L_{ik}$, $L_{jk}$. There is a
    non-split exact triangle
    \[
      \begin{tikzcd}[column sep=small]
        L\times L_{ij}\rar&L\times L_{ik}\rar&L\times L_{jk}\rar&(L\times
        L_{ij})[1]
      \end{tikzcd}
    \]
    in the partially wrapped Fukaya category $\WF{n}{d}$.
  \item\label{it:Auroux:iso} Suppose that $d\geq 2$. Let $L$ be the product of $d-2$ pairwise
    disjoint properly embedded arcs in $\theDisk\setminus\Stops[][n]$ which are
    not homotopic to any of the arcs $L_{ij}$, $L_{ik}$, $L_{jk}$. There is a
    quasi-isomorphism
    \[
      \morphism{L\times L_{ij}\times L_{ik}}{L\times L_{ij}\times
        L_{jk}}[\simeq]
    \]
    in the partially wrapped Fukaya category $\WF{n}{d}$.\qedhere
  \end{enumerate}
\end{lemma}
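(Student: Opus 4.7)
The plan for part \ref{it:Auroux:triangle:1} is to invoke the Lagrangian surgery exact triangle in the sense of Fukaya--Oh--Ohta--Ono and Seidel. Since the three arcs $L_{ij}$, $L_{ik}$, $L_{jk}$ bound an embedded topological triangle in $\theDisk\setminus\Stops[][n]$, this triangle supports a unique holomorphic map contributing to a canonical morphism $L_{ij}\to L_{ik}$ of degree $0$ (for a suitably chosen grading structure on each arc), whose surgery at the common endpoint region is Hamiltonian isotopic to $L_{jk}$. Alternatively, since we have already established a quasi-equivalence $\WF{n}{1}\simeq\perf{\Aus{n}{1}}$ and the $L_{ab}$ are well known to correspond to interval modules for the linearly oriented $A_n$-quiver, the asserted exact triangle transports to the obvious short exact sequence of interval modules, where it is tautological.

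For part \ref{it:Auroux:triangle}, the strategy is to promote the triangle from \ref{it:Auroux:triangle:1} to the symmetric product. In Auroux's setup, the $A_\infty$-operations between product Lagrangians $\prod L_{a,b}$ in $\WF{n}{d}$ are computed by counting holomorphic polygons in $\theDisk$ and assembling their outputs via permutations in $\SymGrp{d}$. The holomorphic triangle in the base disk that witnesses \ref{it:Auroux:triangle:1} therefore lifts, together with its companion counts, to a holomorphic triangle in $\Sym{d}{\theDisk}$ with boundary on the three product Lagrangians $L\times L_{ij}$, $L\times L_{ik}$, $L\times L_{jk}$. The disjointness hypothesis on the factors of $L$ ensures that no degenerate contributions arise and that the mapping cone of the resulting degree-zero map is quasi-isomorphic to $L\times L_{jk}$, giving the advertised triangle.

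For part \ref{it:Auroux:iso}, the natural tactic is to run the same argument as in \ref{it:Auroux:triangle}, but formally taking the ``base'' of the product to be $L\times L_{ij}$ so that the triangle from \ref{it:Auroux:triangle:1} produces
\[
  \begin{tikzcd}[column sep=small]
    L\times L_{ij}\times L_{ij}\rar&L\times L_{ij}\times L_{ik}\rar&L\times L_{ij}\times L_{jk}.
  \end{tikzcd}
\]
The hypotheses of \ref{it:Auroux:triangle} are now violated precisely because of the repeated factor, and indeed the first term is a degenerate product Lagrangian which, being supported on the ramification locus of the covering $\theDisk^d\to\Sym{d}{\theDisk}$, represents the zero object in $\WF{n}{d}$ (after an arbitrarily small Hamiltonian perturbation it becomes displaceable from itself). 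Accepting this, the degenerate exact triangle forces the middle arrow $L\times L_{ij}\times L_{ik}\to L\times L_{ij}\times L_{jk}$ to be a quasi-isomorphism, as required.

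The main technical obstacle is the vanishing assertion used in part \ref{it:Auroux:iso}: since $L_{ij}\times L_{ij}$ is not an embedded Lagrangian in $\Sym{2}{\theDisk}$, one must carefully interpret it as a formal object in $\WF{n}{d}$ and justify its triviality, for instance by perturbing one of the factors to a disjoint parallel copy and exhibiting it as the boundary of a holomorphic disk, or by invoking the general stop-removal/diagonal vanishing yoga for symmetric products. A secondary but ubiquitous nuisance is the bookkeeping of grading structures; the phrase ``with respect to a suitable choice of grading structures'' is essential, and one uses the uniqueness of the $\ZZ$-grading on $\WF{n}{d}$ (since $H^1(\Sym{d}{\theDisk})=0$) together with shifts of individual Lagrangians to place the connecting morphisms in the prescribed degrees.
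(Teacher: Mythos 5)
The paper does not prove this lemma itself; it is recalled verbatim from Lemma~5.2 of \cite{Aur10}, so your attempt has to be measured against Auroux's argument. Your sketches of parts (1) and (2) are consistent with it: the triangle in the disk is a standard surgery/cone computation, and it survives taking products with disjoint spectator arcs because the holomorphic polygons in $\Sym{d}{\theDisk}$ localise in the factor containing $L_{ij}$, $L_{ik}$, $L_{jk}$. Two minor caveats: deducing (1) from the equivalence $\WF{n}{1}\simeq\perf{\Aus{n}{1}}$ presupposes an identification of $L_{ij}$ with the cone of the Reeb morphism $L_{0i}\to L_{0j}$, which is essentially the statement being proven, so the surgery argument is the non-circular route; and you never address why the triangle in (2) is non-split, which requires observing that the connecting morphism $L\times L_{jk}\to (L\times L_{ij})[1]$ is nonzero.

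The genuine gap is in part (3). You apply the exact triangle of (2) with spectator factor $L\times L_{ij}$, but the hypotheses of (2) explicitly exclude spectators homotopic to $L_{ij}$, $L_{ik}$ or $L_{jk}$, and this exclusion is not cosmetic: the polygon counts that establish (2) change when the spectator coincides with one of the three arcs, so no argument is offered that the ``degenerate triangle'' you invoke actually exists. Moreover the object $L\times L_{ij}\times L_{ij}$ is not a Lagrangian of the allowed form, and after replacing one factor by a parallel push-off $L_{ij}^{\varepsilon}$ one obtains a legitimate object whose vanishing must itself be proven; the correct mechanism is that the thin bigon between the two parallel arcs produces a degree $-1$ endomorphism whose differential is the unit (mirroring the ideal generated by the degenerate idempotents $f_{KK}$ in the introduction's definition of $\Aus{n}{d}$), not that the object is ``displaceable from itself'', which is not a meaningful criterion for noncompact exact Lagrangians in a wrapped setting. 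Auroux's actual proof of (3) sidesteps the degenerate object entirely: it is a handleslide-invariance statement in the sense of Perutz \cite{Per08} --- $L_{jk}$ is obtained from $L_{ik}$ by sliding across the disjoint arc $L_{ij}$, and product Lagrangians in the symmetric product are invariant under such slides up to quasi-isomorphism. As written, your part (3) is not a proof.
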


\Cref{lemma:Auroux} has several useful corollaries. To state them, it is convenient
to introduce the following notation.

\begin{notation}
  Let $\n_+=\set{0,1,\dots,n}$. The map
  \[
    \morphism*{I}{\d{I}\coloneqq\set{0}\cup I}
  \]
  identifies $\mychoose{n}{d}$ with the subset of $\mychoose{n_+}{d+1}$
  consisting of those $(d+1)$-element subsets $I\subseteq\mathbf{n_+}$ such that
  $0\in I$. For $I=\set{i_0<i_1<\cdots<i_d}$ in $\mychoose{n_+}{d+1}$ we
  introduce the Lagrangian
  \[
    \L{I}=\prod_{a=1}^d\L{i_0,i_a}
  \]
  in $\Sym{d}{\theDisk\setminus\Stops[d][n]}$. Thus, the set
  $\set{\L{I}}[I\in\mychoose{n}{d}]$ of generators of $\WF{n}{d}$can be
  identified with the set
  \[
    \set{\L{I}}[I\in\mychoose{n_+}{d+1}:0\in I].
  \]
  Finally, for a subset $I\in\mychoose{n_+}{d+1}$ such that $0\not\in I$, we
  introduce the subsets
  \[
    \d{I}[a]=\set{0}\cup(I\setminus\set{i_a}),\qquad 0\leq a\leq d
  \]
  together with the corresponding Lagrangians $\PL{I}[a]$ which, by
  construction, belong to the above set of generators of $\WF{n}{d}$.
\end{notation}

\begin{corollary}
  \label{lemma:standard_res:parts}
  Let $I\in\mychoose{n_+}{d+1}$ be such that $0\not\in I$. %
  For each $0\leq a\leq d$, there is an exact triangle
  \[
    \begin{tikzcd}
      \displaystyle L_{i_{a-1},i_{a}}\times\prod_{b\neq
        a,a+1}L_{i_{b-1},i_b}\rar&L_{i_{a-1},i_{a+1}}\times\displaystyle \prod_{b\neq
        a,a+1}L_{i_{b-1},i_b}\dar\\&L_{i_{a},i_{a+1}}\times\displaystyle \prod_{b\neq
        a,a+1}L_{i_{b-1},i_b}\ar[out=180,in=300]{ul}[description]{1}
    \end{tikzcd}
  \]
  in the partially wrapped Fukaya category $\WF{n}{d}$, where the middle term is
  quasi-isomorphic to $\PL{I}[k]$.
\end{corollary}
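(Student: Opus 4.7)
The plan is two-fold: first, obtain the exact triangle as a direct invocation of the second part of \Cref{lemma:Auroux}; second, identify the middle term with $\PL{I}[a]$ via an iterated straightening argument based on the third part of \Cref{lemma:Auroux}.

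For the triangle, I set $(i, j, k) = (i_{a-1}, i_a, i_{a+1})$, with the convention $i_{-1} = 0$ when $a = 0$, and take as ``ambient'' Lagrangian the $(d-1)$-fold product $L = \prod_{b \neq a, a+1} L_{i_{b-1}, i_b}$. None of the factors of $L$ is homotopic to $L_{ij}$, $L_{ik}$ or $L_{jk}$, so the second part of \Cref{lemma:Auroux} applies verbatim and yields the required exact triangle.

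To identify $L_{i_{a-1}, i_{a+1}} \times L$ with $\PL{I}[a] = \prod_{b \neq a} L_{0, i_b}$, the key observation is that both Lagrangians are products of $d$ disjoint arcs whose combinatorics is that of a spanning tree on the vertex set $\{0, i_0, \dots, \widehat{i_a}, \dots, i_d\}$: the middle term of the triangle realises the \emph{path} $0 - i_0 - i_1 - \cdots - i_{a-1} - i_{a+1} - \cdots - i_d$, while $\PL{I}[a]$ realises the \emph{star} with centre $0$. In these terms, the third part of \Cref{lemma:Auroux} is exactly the elementary edge-swap exchanging two sides of a triangle with vertices $i < j < k$, and successive applications allow one to transform one spanning tree into another. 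I straighten the path outward from the root $0$: supposing inductively that the initial segment $L_{0, i_0}, L_{i_0, i_1}, \ldots, L_{i_{c-2}, i_{c-1}}$ has already been replaced by $L_{0, i_0}, L_{0, i_1}, \ldots, L_{0, i_{c-1}}$, I apply the third part of \Cref{lemma:Auroux} with triple $(0, i_{c-1}, i_c)$ to turn $L_{i_{c-1}, i_c}$ into $L_{0, i_c}$; at the jump I use the triple $(0, i_{a-1}, i_{a+1})$ to turn $L_{i_{a-1}, i_{a+1}}$ into $L_{0, i_{a+1}}$; then I continue along the tail by the same procedure. After finitely many steps I arrive at $\PL{I}[a]$.

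The main technical nuisance is the bookkeeping needed to guarantee that at each stage the remaining factors of the ambient product are disjoint from, and not homotopic to, the three arcs of the triangle to which the third part of \Cref{lemma:Auroux} is being applied. Since all the arcs involved live in the two-dimensional disk and are determined only up to isotopy, this disjointness can always be arranged by a small perturbation and is not a genuine obstacle; it is however the step that requires real care, particularly at the extreme values of $a$ (where the initial or terminal segment of the path degenerates) and at the jump from $i_{a-1}$ to $i_{a+1}$.
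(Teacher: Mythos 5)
Your proposal is correct and follows essentially the same route as the paper: the exact triangle is exactly the special case of \Cref{lemma:Auroux}\eqref{it:Auroux:triangle} for the triple $i_{a-1}<i_a<i_{a+1}$ (with $i_{-1}=0$), which is the entirety of the paper's proof of this corollary. The identification of the middle term with $\PL{I}[a]$ is deferred in the paper to the immediately following corollary, whose proof is precisely the iterated application of \Cref{lemma:Auroux}\eqref{it:Auroux:iso} that your path-to-star straightening makes explicit.
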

\begin{proof}
  This is a special case of \Cref{lemma:Auroux}\eqref{it:Auroux:triangle} for
  the triple $0\leq i_{a-1}<i_a<i_{a+1}\leq n$, where $i_{-1}=0$ by convention.
\end{proof}

\begin{corollary}
  Let $I\in\mychoose{n_+}{d+1}$. There is a quasi-isomorphism
  \[
    \morphism{\L{I}}{L_{i_0,i_1}\times L_{i_1,i_2}\times\cdots\times
      L_{i_{d-1},i_d}}[\simeq]
  \]
  in the partially wrapped Fukaya category $\WF{n}{d}$. In particular, if
  $0\not\in I$, there is a quasi-isomorphism
  \[
    \morphism{\PL{I}[a]}{L_{i_{a-1},i_{a+1}}\times\displaystyle\prod_{b\neq
        k,k+1}L_{i_{b-1},i_b}}[\simeq]
  \]
  in $\WF{n}{d}$ for each $0\leq a\leq d$.
\end{corollary}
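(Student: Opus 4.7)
The plan is to establish the first quasi-isomorphism by iterated application of \Cref{lemma:Auroux}\eqref{it:Auroux:iso}, transforming the factors of $\L{I}=\prod_{a=1}^{d}L_{i_0,i_a}$ one at a time, from right to left. For $a$ descending from $d$ down to $2$, I will invoke that isomorphism for the triple $i_0<i_{a-1}<i_a$ with the remaining $d-2$ factors of the current expression playing the role of the auxiliary Lagrangian. Crucially, the equivalence
\[
  L_{i_0,i_{a-1}}\times L_{i_0,i_a}\simeq L_{i_0,i_{a-1}}\times L_{i_{a-1},i_a}
\]
leaves the left-hand anchor $L_{i_0,i_{a-1}}$ untouched, so it remains available to drive the subsequent step at index $a-1$; after the final step ($a=2$) the product becomes
\[
  L_{i_0,i_1}\times L_{i_1,i_2}\times\cdots\times L_{i_{d-1},i_d},
\]
and composing the iterated equivalences yields the desired quasi-isomorphism.

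The only non-trivial point at each step is the hypothesis of \Cref{lemma:Auroux}\eqref{it:Auroux:iso} that the $d-2$ auxiliary arcs be pairwise disjoint and that none of them be homotopic to any of $L_{i_0,i_{a-1}}$, $L_{i_0,i_a}$, or $L_{i_{a-1},i_a}$. Since arcs in $\theDisk\setminus\Stops[][n]$ are determined up to isotopy by their unordered pair of endpoint labels, this reduces to an elementary inspection: at step $a$, the not-yet-transformed factors $L_{i_0,i_b}$ with $b<a-1$ and the already-transformed factors $L_{i_{b-1},i_b}$ with $b>a$ all have label pairs distinct from the three appearing in the iso, thanks to the strict chain $i_0<i_1<\cdots<i_d$. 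Pairwise disjoint representatives for the intermediate product Lagrangians can be arranged by a standard isotopy argument, which I do not expect to cause any essential difficulty.

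For the ``in particular'' statement, I will apply the first part of the corollary to the $(d+1)$-element subset $\d{I}[a]=\set{0}\cup(I\setminus\set{i_a})$, using the tautology $\PL{I}[a]=\L{\d{I}[a]}$. Ordering $\d{I}[a]=\set{j_0<j_1<\cdots<j_d}$ and adopting the convention $i_{-1}=0$, the relabelling $j_0=0$, $j_k=i_{k-1}$ for $1\leq k\leq a$, and $j_k=i_k$ for $a+1\leq k\leq d$, identifies the chain $L_{j_0,j_1}\times\cdots\times L_{j_{d-1},j_d}$ supplied by the first part with the asserted expression $L_{i_{a-1},i_{a+1}}\times\prod_{b\neq a,a+1}L_{i_{b-1},i_b}$ after a trivial reordering of factors. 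No ingredient beyond \Cref{lemma:Auroux} is needed, and the main piece of bookkeeping is the endpoint-label verification in the iteration.
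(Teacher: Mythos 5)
Your proposal is correct and follows exactly the paper's (one-sentence) proof: iterated application of the quasi-isomorphisms in \Cref{lemma:Auroux}, specialising to $\d{I}[a]$ for the second claim. You have merely made explicit the choice of triples, the order of the iteration, and the verification of the disjointness/non-homotopy hypotheses, all of which check out.
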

\begin{proof}
  The existence of the claimed quasi-isomorphism follows by iterative
  application of the quasi-isomorphisms in
  \Cref{lemma:Auroux}\eqref{it:Auroux:iso}.
\end{proof}

\begin{corollary}
  \label{prop:resolutions}
  Fix grading structures on the standard generators of $\WF{n}{d}$ as in
  \Cref{prop:grading_structures:LI}. Let $I\in\mychoose{n_+}{d+1}$ be such that
  $0\not\in I$. Then, the object $\L{I}$ of $\WF{n}{d}$ admits a (triangulated)
  resolution of the form
  \[
    \begin{tikzcd}[column sep=small,row sep=small]
      &\PL{I}[d-1]\drar&&\cdots\drar&&\PL{I}[1]\drar&&\PL{I}[0]\drar\\
      \PL{I}[d]\urar&&X_{d-1}\ar{ll}[description]{1}\urar&\cdots&X_2\urar&&X_{1}\ar{ll}[description]{1}\urar&&\L{I}\ar{ll}[description]{1}
    \end{tikzcd}
  \]
  where the grading structure on $\L{I}$ is uniquely determined by the
  requirement that the morphism $\PL{I}[0]\to \L{I}$ is of degree $0$.
\end{corollary}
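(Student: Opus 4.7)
The plan is to construct the resolution by iterating \Cref{lemma:Auroux}\eqref{it:Auroux:triangle}, working with the chain-form presentations supplied by the preceding corollary. Setting $i_{-1}\coloneqq 0$ and considering the full chain product $L_{i_{-1},i_0}\times L_{i_0,i_1}\times\cdots\times L_{i_{d-1},i_d}$ of $d+1$ arcs in $\theDisk\setminus\Stops[][n]$, for each $0\leq k\leq d$ I introduce the auxiliary product Lagrangian $X_k$ obtained from this chain by deleting the factor $L_{i_{k-1},i_k}$; thus $X_k$ is a product of $d$ pairwise disjoint arcs. The preceding corollary identifies $X_0\simeq\L{I}$ (delete the first factor, recovering the chain form of $\L{I}$) and $X_d\simeq\PL{I}[d]$ (delete the last factor); moreover, for $0\leq k\leq d-1$, the same corollary identifies $\PL{I}[k]$ with the chain product obtained from the full chain by replacing the adjacent pair $L_{i_{k-1},i_k}\times L_{i_k,i_{k+1}}$ by the single longer arc $L_{i_{k-1},i_{k+1}}$.

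In particular, for each $0\leq k\leq d-1$ the three objects $X_{k+1}$, $\PL{I}[k]$, $X_k$ share the same $d-1$ complementary factors, and differ only in one distinguished factor, namely $L_{i_{k-1},i_k}$, $L_{i_{k-1},i_{k+1}}$, $L_{i_k,i_{k+1}}$ respectively. Applying \Cref{lemma:Auroux}\eqref{it:Auroux:triangle} to the triple of indices $i_{k-1}<i_k<i_{k+1}$, with the product of the remaining $d-1$ arcs playing the role of $L$, yields the exact triangle
\[
  X_{k+1}\longrightarrow\PL{I}[k]\longrightarrow X_k\longrightarrow X_{k+1}[1].
\]
Splicing these $d$ exact triangles along the common intermediate vertices $X_1,\ldots,X_{d-1}$ produces the staircase diagram in the statement, with $X_d=\PL{I}[d]$ at the far left and $X_0=\L{I}$ at the far right.

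The grading structures are handled as follows: each $\PL{I}[k]$ carries its canonical grading from \Cref{prop:grading_structures:LI}, and each of the $d$ triangles above determines a unique grading on its middle $X_k$ given one on $X_{k+1}$. The remaining global shift is pinned down by the normalisation that $\PL{I}[0]\to\L{I}$ have degree $0$, which fixes the grading on $\L{I}=X_0$. The only mildly delicate point is to check that the $d$ separate invocations of \Cref{lemma:Auroux}\eqref{it:Auroux:triangle} produce degree shifts that glue together coherently; this is automatic, since the partially wrapped Fukaya category $\WF{n}{d}$ admits a \emph{unique} $\ZZ$-grading (\S\ref{subsec:global_gradings}), so any locally consistent assignment of degrees necessarily extends to a global one.
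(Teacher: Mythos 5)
Your proof is correct and follows the paper's own route: the triangles $X_{k+1}\to\PL{I}[k]\to X_k\to X_{k+1}[1]$ you extract from \Cref{lemma:Auroux}\eqref{it:Auroux:triangle} applied to the triples $i_{k-1}<i_k<i_{k+1}$ (with $i_{-1}=0$) are exactly those recorded in \Cref{lemma:standard_res:parts}, and the paper likewise obtains the resolution by splicing them along the intermediate objects. The only imprecision is your closing appeal to the uniqueness of the $\ZZ$-grading on $\WF{n}{d}$ (a statement about the grading of $\Sym{d}{\theDisk}$, not about grading structures on individual Lagrangians); the correct and equally immediate justification is that the grading structure on each contractible product Lagrangian forms a $\ZZ$-torsor, so it is fixed inductively by requiring the successive triangle maps to have degree $0$, which is precisely how the paper determines the grading on $\L{I}$ from the already fixed one on $\PL{I}[0]$.
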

\begin{proof}
  The required resolution is obtained by splicing together the exact triangles
  from \Cref{lemma:standard_res:parts} in the apparent manner. The existence of
  a unique grading structure on the Lagrangian $\L{I}$ with the desired property
  is clear since the grading structure on the Lagrangian $\PL{I}[0]$ is already
  fixed.
\end{proof}

\subsubsection{Koszul duality for $\WF{n}{d}$}

\begin{lemma}
  \label{lemma:smooth_and_proper}
  The higher Auslander algebra $\Aus{n}{d}$ is proper and homologically smooth over $\kk$.
\end{lemma}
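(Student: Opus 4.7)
The plan is to treat properness and smoothness separately, since each follows from a different structural feature of $\Aus{n}{d}$.

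Properness is immediate from the definition. By construction, $\Aus{n}{d}$ is a finitely generated free $\kk$-module with basis the monomials $f_{JI}$, indexed by those pairs $I\leq J$ in $\mychoose{n}{d}$ satisfying $j_a<i_{a+1}$ for every $1\leq a<d$; this basis is finite. Hence $\Aus{n}{d}$ is finitely generated and projective as a $\kk$-module, which is precisely properness.

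For smoothness, the plan is to exhibit a finite resolution of the diagonal bimodule $\Aus{n}{d}$ by finitely generated projective $\Aus{n}{d}^e$-bimodules, where $\Aus{n}{d}^e=\Aus{n}{d}\otimes_\kk\Aus{n}{d}^{\mathrm{op}}$. When $\kk$ is a field, this follows immediately from two inputs already at our disposal: the algebra is finite-dimensional, and Iyama's bound $\gldim\Aus{n}{d}\leq d$ recalled in the introduction (see \cite{Iya11}); the standard fact that a finite-dimensional $\kk$-algebra of finite global dimension over a field is homologically smooth then completes the argument. For a general commutative $\kk$, the approach is instead to read off the resolution directly from the presentation of $\Aus{n}{d}$ as a path algebra of the Hasse quiver of $\mychoose{n}{d}$ modulo monomial relations, by means of a Bardzell- or Anick-type construction. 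This construction produces a length-$d$ resolution whose terms are finitely generated projective bimodules of the form $\Aus{n}{d}f_{II}\otimes_\kk f_{JJ}\Aus{n}{d}$ indexed by certain combinatorial data (overlapping monomials in the relation ideal), and is manifestly functorial in the base ring $\kk$.

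The main obstacle is the passage from the field case to arbitrary $\kk$. Over a field the conclusion is instantaneous from Iyama's result, but in the general setting one has to verify that the bimodule resolution can be constructed in a base-change-compatible way. Because the defining relations of $\Aus{n}{d}$ are monomial with $0/1$ coefficients, the combinatorial resolution descends from $\ZZ$ to any $\kk$ without substantive complication, so no genuine technical difficulty is expected beyond bookkeeping.
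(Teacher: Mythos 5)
Your properness argument is the same as the paper's and is fine: $\Aus{n}{d}$ is free of finite rank over $\kk$, hence perfect as a $\kk$-module. The smoothness argument, however, has a genuine gap, and it lies exactly where you locate the "main obstacle". First, a minor point on the field case: it is \emph{not} true that every finite-dimensional algebra of finite global dimension over a field is homologically smooth (purely inseparable extensions give counterexamples); the implication holds here only because $\Aus{n}{d}$ is elementary, i.e.\ $\Aus{n}{d}/\operatorname{rad}$ is a finite product of copies of $\kk$, so that the simples are already defined over $\kk$. More seriously, your route for general $\kk$ rests on a false premise: for $d\geq 2$ the algebra $\Aus{n}{d}$ is \emph{not} presented by monomial relations on the Hasse quiver of $\mychoose{n}{d}$. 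The phrase ``monomial quotient of the incidence algebra'' means that the ideal is generated by basis elements $f_{JI}$ of the incidence algebra; when one rewrites this as a quiver presentation, the commutativity relations of the incidence algebra (binomials $p-q$, visible already in the commutative-diagram description of $\Aus{n}{2}$-modules) are part of the defining ideal. Consequently Bardzell's resolution does not apply, and an Anick/Gr\"obner-basis substitute would require computing a noncommutative Gr\"obner basis, proving that the resulting chains terminate in length $d$ (which amounts to re-establishing $\gldim\Aus{n}{d}\leq d$ integrally rather than quoting Iyama's field-based result), and invoking a two-sided version of the resolution over $\ZZ$. None of this is mere bookkeeping, so the passage from the field case to arbitrary $\kk$ is not closed.

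For comparison, the paper avoids explicit resolutions altogether: it argues by induction, starting from $\Aus{n}{n}\cong\kk$, using the recollement that exhibits $\perf{\Aus{n}{d}}$ as an upper-triangular gluing of $\perf{\Aus{n-1}{d-1}}$ and $\perf{\Aus{n-1}{d}}$, together with the theorem of Lunts--Schn\"urer that a gluing of two homologically smooth dg categories along a perfect bimodule is again homologically smooth (perfection of the gluing bimodule being checked via preservation of compact objects). If you want to salvage your strategy, you would either need to carry out the integral Anick-type computation in detail, or replace it by an inductive gluing argument of this kind.
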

\begin{proof}
  The higher Auslander algebra $\Aus{n}{d}$ is clearly proper over $\kk$ since
  its underlying $\kk$-module is free of finite rank. To prove that
  $\Aus{n}{d}$ is homologically smooth over $\kk$ one can proceed as follows.
  Firstly, $\Aus{n}{n}\cong\kk$ is certainly homologically smooth over
  $\kk$. Secondly, we observe that there exists a recollement
  \[
    \begin{tikzcd}
      \perf{\Aus{n-1}{d-1}}\rar&\perf{\Aus{n}{d}}\rar\lar[shift left=0.5em]{i_R}\lar[shift right=0.5em]&%
      \perf{\Aus{n-1}{d}},\lar[shift left=0.5em]\lar[shift right=0.5em,swap]{p_L}%
    \end{tikzcd}
  \]
  which expresses $\perf{\Aus{n}{d}}$ as the upper-triangular gluing of
  $\perf{\Aus{n-1}{d-1}}$ and $\perf{\Aus{n-1}{d}}$, see for example
  Proposition~2.50 in \cite{DJW19b}. Inductively, we may assume that the
  $\kk$-algebras $\Aus{n-1}{d-1}$ and $\Aus{n-1}{d}$ are already known to be
  homologically smooth over $\kk$. The claim follows from Theorem~3.24 in
  \cite{LS14}, which shows that the gluing of two differential graded
  $\kk$-categories which are homologically smooth over $\kk$ is again
  homologically smooth over $\kk$, provided that the gluing bimodule is perfect.
  To see that the gluing functor
  \[
    \functor[F\coloneqq i_R\circ p_L]{\perf{\Aus{n-1}{d}}}{\perf{\Aus{n-1}{d-1}}}
  \]
  in the above recollement is indeed given by a perfect bimodule, consider the
  induced (homotopy) colimit-preserving functor
  \[
    \functor[\mathbb{L}F_!]{\operatorname{D}(\Aus{n-1}{d})}{\operatorname{D}(\Aus{n-1}{d-1})}
  \]
  obtained by passing to the $\operatorname{Ind}$-completions of
  $\perf{\Aus{n-1}{d}}$ and $\perf{\Aus{n-1}{d-1}}$, respectively. By
  construction, the functor $\mathbb{L}F_!$ preserves compact objects and
  therefore its underlying bimodule is perfect by Lemma~2.8 in \cite{TV07}
  (keeping in mind that the $\kk$-algebra $\Aus{n-1}{d}$ is homologically
  smooth over $\kk$ by the inductive hypothesis).
\end{proof}

\Cref{thm:WF-Auslander:Koszul} is an immediate consequence of the following
statement.

\begin{proposition}
  \label{prop:Koszul}
  The generating collections
  \[
    \set{\L{I}}[I\in\mychoose{n}{d}]\qquand\set{\L*{I}}[I\in\mychoose{n}{d}]
  \]
  of the partially wrapped Fukaya category $\WF{n}{d}$ are Koszul dual of each
  other.
\end{proposition}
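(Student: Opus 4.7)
The plan is to show that under the quasi-equivalence $\perf{\Aus{n}{d}} \xrightarrow{\simeq} \WF{n}{d}$ of \Cref{thm:WF-Auslander}, each Lagrangian $\L*{I}$ corresponds to the simple $\Aus{n}{d}$-module $S_I$ indexed by $I \in \mychoose{n}{d}$. Once this identification is established, the derived endomorphism algebra of $\bigoplus_I \L*{I}$ in $\WF{n}{d}$ satisfies
\[
  \dgAus*{n}{d} = \bigoplus_{I,J}\hom[\L*{J}][\L*{I}] \simeq \operatorname{RHom}_{\Aus{n}{d}}\Bigl(\bigoplus_I S_I,\,\bigoplus_I S_I\Bigr),
\]
and the right-hand side is by definition the Koszul dual DG algebra $\Aus*{n}{d}$ of the augmented $\kk$-algebra $\Aus{n}{d}$. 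This is precisely the statement that the collections $\set{\L{I}}$ and $\set{\L*{I}}$ of generators of $\WF{n}{d}$ are Koszul dual of each other: one corresponds to the indecomposable projectives and the other to the corresponding simples in $\perf{\Aus{n}{d}}$. Combined with the smoothness and properness of $\Aus{n}{d}$ from \Cref{lemma:smooth_and_proper}, this supplies exactly the hypotheses needed by Keller's theorem to deduce the derived Morita equivalences stated in \Cref{thm:WF-Auslander:Koszul}.

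To construct the identification $\L*{I} \simeq S_I$, the plan is to exhibit an explicit twisted-complex expression for $\L*{I} = \prod_{a=1}^d L_{i_a - 1, i_a}$ whose terms are standard generators $\L{J}$ and whose image in $\perf{\Aus{n}{d}}$ under \Cref{thm:WF-Auslander} is the minimal projective resolution of $S_I$. The building blocks are the exact triangles $L_{0, i_a - 1} \to L_{0, i_a} \to L_{i_a - 1, i_a}$ in $\WF{n}{1}$ from \Cref{lemma:Auroux}\eqref{it:Auroux:triangle:1}; iteratively applying \Cref{lemma:Auroux}\eqref{it:Auroux:triangle} produces a twisted-complex presentation of $\L*{I}$ with terms $\prod_a L_{0, j_a^K}$ indexed by subsets $K \subseteq \set{1, \ldots, d}$, where $j_a^K = i_a - 1$ iff $a \in K$. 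Under the equivalence of \Cref{thm:WF-Auslander}, the term indexed by $K$ is sent to the indecomposable projective $P_{J_K}$ (with $J_K \coloneqq \set{j_a^K}$), or vanishes whenever $J_K$ has colliding entries---equivalently, when $J_K \in \multichoose{n}{d}^\flat$ so that the corresponding generator is quotiented to zero in the monomial algebra $\Aus{n}{d}$. The resulting complex of projectives should be recognisable as the minimal projective resolution of $S_I$, whose differentials are determined by the downward covering relations of $I$ in the poset $\mychoose{n}{d}$ that survive the flat constraint.

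The main obstacle will be the bookkeeping of differentials and signs: one must verify that the Koszul-type differentials obtained from iterating the cone construction in \Cref{lemma:Auroux} coincide (signs included) with the canonical maps between projectives in the minimal resolution of $S_I$. A closely related subsidiary point concerns grading: the convention that the Reeb-chord morphism $\L{I} \to \L*{I}$ has degree $0$ must propagate consistently through the iterated triangles so that $\L*{I}$ sits in cohomological degree $0$ in $\perf{\Aus{n}{d}}$, ensuring $\L*{I} \simeq S_I$ rather than a non-trivial shift $S_I[k]$. Once these verifications are in place, the proposition follows, and Keller's theorem from \cite{Kel94} together with \Cref{lemma:smooth_and_proper} completes the proof of \Cref{thm:WF-Auslander:Koszul}.
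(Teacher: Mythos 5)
Your overall strategy---identify each $\L*{I}$ with the simple $\Aus{n}{d}$-module $S_I$ under the equivalence of \Cref{thm:WF-Auslander}, so that the derived endomorphism algebra of $\bigoplus_I\L*{I}$ becomes $\operatorname{RHom}_{\Aus{n}{d}}(\bigoplus_IS_I,\bigoplus_IS_I)$, and then feed smoothness and properness into Keller's theorem---is exactly the paper's, and your endpoint is correct. Where you diverge is in how the identification $\L*{I}\simeq S_I$ is established. The paper builds no resolutions at all: it proves the orthogonality relations
\[
  \hom[\L{I}][\L*{J}]\cong\begin{cases}\kk(0)&\text{if }I=J,\\0&\text{otherwise},\end{cases}
\]
by a direct geometric computation (\Cref{lemma:Koszul:orthogonal}): the arcs $L_{0,i}$ and $L_{j-1,j}$ support no morphisms unless $i=j$, in which case there is a single generator, placed in degree $0$ by the very definition of the grading structure on $\L*{I}$. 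Since the $\L{I}$ correspond to the indecomposable projectives, an object $X$ with $\operatorname{RHom}(P_I,X)\cong\kk\,\delta_{IJ}$ concentrated in degree $0$ has total cohomology $\bigoplus_IXf_{II}\cong\kk$ supported at the vertex $J$ in degree $0$, hence is forced to be $S_J$. No cones, no signs, no minimality argument---and your grading worry is settled for free.

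By contrast, your iterated-cone route has a concrete gap at precisely the terms you want to discard. When $J_K$ has colliding entries (e.g.\ $i_{a+1}-1=i_a$ with $a+1\in K$) the would-be term $\prod_aL_{0,j_a^K}$ involves a repeated arc factor, and \Cref{lemma:Auroux}\eqref{it:Auroux:triangle} cannot be applied to form the cone: its hypothesis that the complementary arcs be disjoint from and non-homotopic to $L_{ij},L_{ik},L_{jk}$ fails. So these terms do not ``vanish'' as objects of a twisted complex you have constructed; rather, the triangle degenerates, and the correct repair is \Cref{lemma:Auroux}\eqref{it:Auroux:iso}, which says that in the colliding situation the map whose cone you would be taking is already a quasi-isomorphism. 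Even after this repair, matching the resulting $d$-dimensional twisted complex with the minimal projective resolution of $S_I$ requires controlling possible higher-order components produced by iterated cones, and normalising the signs on the cube differentials (feasible, by an argument in the spirit of \Cref{prop:Bruhat_acyclic_intervals}, but it must be said). All of this work is avoidable by computing $\hom[\L{I}][\L*{J}]$ directly, which is what the paper does.
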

\begin{proof}
  Firstly, note that the $\kk$-algebra $\Aus{n}{d}$ has a natural augmentation
  $\Aus{n}{d}\to\bigoplus_{I}\kk\cdot f_{II}$ obtained by taking the quotient by
  its two-sided ideal generated by $\{f_{JI}, \; I<J\}$.
  \Cref{lemma:Koszul:orthogonal,lemma:homIJ:rk} below show that the collection
  $\set{\L*{I}}[I\in\mychoose{n}{d}]$ satisfy the homological conditions which
  characterise the $\Aus{n}{d}$-module $\bigoplus_{I}\kk\cdot f_{II}$ in the
  perfect derived category $\perf{\Aus{n}{d}}$, noting that the higher Auslander
  algebra $\Aus{n}{d}$ is proper and homologically smooth over $\kk$, see
  \Cref{lemma:smooth_and_proper}.
\end{proof}

\begin{lemma}
  \label{lemma:Koszul:orthogonal}
  Let $I,J\in\mychoose{n}{d}$. There are isomorphisms of graded $\kk$-modules
  \[
    \hom[\L{I}][\L*{J}]\cong\begin{cases}
      \kk(0)&\text{if }I=J,\\
      0&\text{otherwise}.
    \end{cases}\qedhere
  \]
\end{lemma}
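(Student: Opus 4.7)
The strategy is to reduce the computation to a one-dimensional statement via Auroux's decomposition of morphism spaces between product Lagrangians in symmetric products, which was already central to the arguments of \Cref{prop:dgAus=dgBruhat:graded}. Applied to isotopy representatives of the arcs in $\{L_{0,i}\}\cup\{L_{j-1,j}\}$ chosen to satisfy the hypotheses of \Cref{thm:Auroux}, that argument yields a quasi-isomorphism
\[
  \hom[\L{I}][\L*{J}] \;\simeq\; \bigoplus_{\pi\in\SymGrp{d}} \bigotimes_{a=1}^{d} \hom[L_{0,i_a}][L_{j_{\pi(a)}-1,j_{\pi(a)}}]
\]
in $\WF{n}{d}$. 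It therefore suffices to compute each one-dimensional factor $\hom[L_{0,i}][L_{j-1,j}]$ in $\WF{n}{1}$ and to analyse when a summand on the right-hand side survives.

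For the one-dimensional calculation I apply \Cref{lemma:Auroux}\eqref{it:Auroux:triangle:1} with the triple $0<j-1<j$ to obtain an exact triangle
\[
  L_{0,j-1}\to L_{0,j}\to L_{j-1,j}\to L_{0,j-1}[1]
\]
whose first morphism is, up to a non-zero scalar, the canonical generator of $\hom[L_{0,j-1}][L_{0,j}]\cong\kk(0)$ provided by \eqref{eq:LiLj}. Applying $\hom[L_{0,i}][-]$ and invoking \eqref{eq:LiLj} once more, the induced map $\hom[L_{0,i}][L_{0,j-1}]\to\hom[L_{0,i}][L_{0,j}]$ is post-composition in $H^0(\WF{n}{1})\cong\Aus{n}{1}$ with a non-zero arrow of the directed path algebra, and is therefore an isomorphism between free $\kk$-modules of rank one whenever both sides are non-zero, i.e.\ whenever $i\leq j-1$. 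The resulting long exact sequence on cohomology then gives
\[
  \hom[L_{0,i}][L_{j-1,j}] \;\cong\;
  \begin{cases}\kk(0)&\text{if }i=j,\\0&\text{otherwise.}\end{cases}
\]

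Substituting back into Auroux's decomposition, a summand $\bigotimes_a \hom[L_{0,i_a}][L_{j_{\pi(a)}-1,j_{\pi(a)}}]$ is non-zero precisely when $i_a=j_{\pi(a)}$ for every $a$; since the tuples $(i_1<\cdots<i_d)$ and $(j_1<\cdots<j_d)$ are strictly increasing, this forces $I=J$ and $\pi=e$, producing a unique surviving summand isomorphic to $\kk(0)$ concentrated in total degree $0$---consistent with the convention fixing the grading on $\L*{I}$ so that the apparent product morphism $\L{I}\to\L*{I}$ has degree $0$. The main obstacle is justifying Auroux's decomposition for the \emph{mixed} pair of families $(\L{I},\L*{J})$; this should follow from applying his argument \cite{Aur10} to the combined collection of arcs $\{L_{0,i}\}\cup\{L_{j-1,j}\}$ after choosing isotopy representatives in generic position so that the hypotheses of \Cref{thm:Auroux} are met.
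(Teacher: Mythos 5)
Your proposal is correct and follows the paper's skeleton: both arguments decompose $\hom[\L{I}][\L*{J}]$ via Auroux's formula into summands $\bigotimes_{a}\hom[\L{i_a}][\L*{j_{\pi(a)}}]$ indexed by $\pi\in\SymGrp{d}$, reduce everything to the rank-one computation of $\hom[L_{0,i}][L_{j-1,j}]$ in $\WF{n}{1}$, and observe that strict monotonicity of $I$ and $J$ forces $I=J$ and $\pi=e$. The only real difference is how the one-dimensional factor is computed. The paper reads it off directly from the geometry of the two disjoint arcs $L_{0,i}$ and $L_{j-1,j}$ (there is a single Reeb chord between them precisely when $i=j$; see \Cref{fig:orthogonality}), so the statement holds at the level of underlying graded $\kk$-modules, as asserted. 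You instead derive it homologically from the exact triangle $L_{0,j-1}\to L_{0,j}\to L_{j-1,j}\to L_{0,j-1}[1]$ of \Cref{lemma:Auroux} together with \eqref{eq:LiLj}; this is a valid and somewhat more algebraic route, with two small caveats. First, over an arbitrary commutative ring $\kk$ you need the first map of that triangle to be a \emph{unit} multiple of the generator of $\hom[L_{0,j-1}][L_{0,j}]\cong\kk$, not merely a non-zero one, for post-composition to be an isomorphism; this does hold, since the map is the Reeb-chord generator itself up to sign. Second, your long-exact-sequence argument computes the cohomology of $\hom[L_{0,i}][L_{j-1,j}]$ rather than its underlying graded module, so strictly speaking you obtain the claim only up to quasi-isomorphism; that suffices for the intended application in \Cref{prop:Koszul}, but the chain-level statement of the lemma is what the direct Reeb-chord count gives. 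Your closing worry about applying Auroux's decomposition to the mixed family is handled exactly as you suggest: the arcs $\set{L_{0,i}}\cup\set{L_{j-1,j}}$ can be chosen pairwise disjoint, which is what the paper's figure depicts.
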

\begin{proof}
  Recall that, as (ungraded) $\kk$-modules,
  \[
    \hom[\L{I}][\L*{J}]=\bigoplus_{\pi\in\SymGrp{d}}\hom[\L{I}][\L*{J}]^\pi
  \]
  where
  \[
    \hom[\L{I}][\L*{J}]^\pi=\hom[\L{i_1}][\L*{j_{\pi(1)}}]\otimes\hom[\L{i_2}][\L*{j_{\pi(2)}}]\otimes\cdots\otimes\hom[\L{i_d}][\L*{j_{\pi(d)}}].
  \]
  Moreover, there are isomorphisms of (ungraded) $\kk$-modules
  \[
    \hom[\L{i}][\L*{j}]\cong\begin{cases}
      \kk&\text{if }i=j,\\
      0&\text{otherwise}.
    \end{cases}
  \]
  Indeed, since by definition $\L{i}=L_{0,i}$ and $\L*{j}=\L{j-1,j}$, there is
  an isomorphism $\L{i}\to\L*{j}$ if and only if $i=j$ (note that if $j-1=0$ and
  $i\leq j$ we must have $i=j=1$). It follows readily from the latter
  isomorphisms that the graded $\kk$-module $\hom[\L{I}][\L*{J}]^\pi$ is
  non-zero if and only if $I=J$ and $\pi=e$ is the trivial permutation, see
  \Cref{fig:orthogonality} for an illustration in the case $n=5$.
  \begin{figure}[!h]
    \includegraphics{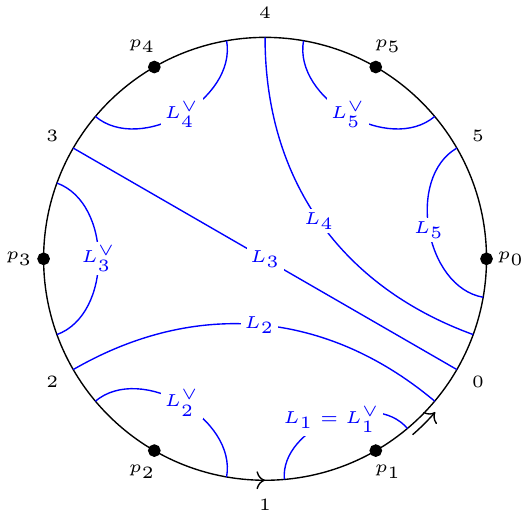}
    \caption{Orthogonality between the generators $\set{\L{I}}$ and
      $\set{\L*{J}}$ in the case $n=5$.}
    \label{fig:orthogonality}
  \end{figure}
  Finally, the grading structure on the Lagrangian $\L*{I}$ is, by definition,
  the unique grading structure such that the graded $\kk$-module
  \[
    \hom[\L{I}][\L*{I}]\cong\kk
  \]
  is concentrated in degree $0$. The claim follows.
\end{proof}

We introduce the following auxiliary notation.

\begin{notation}
  The function $I\mapsto\sum_{a=1}^d i_a$ endows the poset
  $\mychoose{n}{d}\subset\NN^d$ with the structure of a graded poset. For our
  purposes it is more convenient to consider the (normalised) rank function
  \[
    \rk{I}\coloneqq\sum_{a=1}^d(i_a-a),
  \]
  obtained from the usual rank function by subtracting to it the rank of the
  minimal element $\set{1,\dots,d}$ of $\mychoose{n}{d}$.
\end{notation}

\begin{lemma}
  \label{lemma:homIJ:rk}
  Let $I,J\in\mychoose{n}{d}$. There are isomorphisms of graded $\kk$-modules
  \[
    \hom[\L*{J}][\L*{I}]\cong\begin{cases}
      \kk(\rk{I}-\rk{J})&\text{if }\forall a:0\leq j_a-i_a\leq 1,\\
      0&\text{otherwise}.
    \end{cases}
  \]
  In particular, the graded $\kk$-algebra
  \[
    \dgAus*{n}{d}=\bigoplus_{J,I}\hom[\L*{J}][\L*{I}]
  \]
  is generated in cohomological degrees $0$ and $1$.
\end{lemma}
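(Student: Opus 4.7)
The plan is to mirror the structure of the proof of \Cref{lemma:Koszul:orthogonal}. I would begin by invoking Auroux's K\"unneth-type decomposition of hom spaces between product Lagrangians in symmetric products to write, as ungraded $\kk$-modules,
\[
  \hom[\L*{J}][\L*{I}] = \bigoplus_{\pi \in \SymGrp{d}} \hom[\L*{J}][\L*{I}]^\pi, \qquad \hom[\L*{J}][\L*{I}]^\pi = \bigotimes_{a=1}^d \hom[L_{j_a-1,j_a}][L_{i_{\pi(a)}-1,i_{\pi(a)}}],
\]
and, via \eqref{eq:hom_vee-alt}, observe that the $\pi$-summand is nonzero precisely when $0 \leq j_a - i_{\pi(a)} \leq 1$ for every $a$. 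A short combinatorial argument using the strict monotonicity of the entries of $I$ and $J$ then forces $\pi = e$: the inequality chain $i_{\pi(a)} \geq j_a - 1 \geq j_{a-1} \geq i_{\pi(a-1)}$ yields $\pi(a) \geq \pi(a-1)$, and the only nondecreasing element of $\SymGrp{d}$ is the identity. Hence the underlying ungraded $\kk$-module is free of rank one exactly when $0 \leq j_a - i_a \leq 1$ for every $a$, and vanishes otherwise.

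The remaining task is to identify the cohomological degree of the surviving rank-one piece. I would first dispatch the case $d = 1$ directly: applying $\hom[-][\L*{i}]$ to the exact triangle $\L{i} \to \L{i+1} \to \L*{i+1}$ from \Cref{lemma:Auroux}\eqref{it:Auroux:triangle:1}, the vanishings $\hom[\L{i+1}][\L{i-1}] = \hom[\L{i+1}][\L{i}] = 0$ coming from \eqref{eq:LiLj} propagate via the defining triangle $\L{i-1} \to \L{i} \to \L*{i}$ to $\hom[\L{i+1}][\L*{i}] = 0$; combined with the defining identity $\hom[\L{i}][\L*{i}] \cong \kk(0)$, the long exact sequence forces $\hom[\L*{i+1}][\L*{i}] \cong \kk(-1)$. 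For general $d$, I would invoke the additivity of grading structures on product Lagrangians in symmetric products: calibrating each factor $L_{i_a-1,i_a}$ with the 1-dimensional grading making $L_{0,i_a} \to L_{i_a-1,i_a}$ of degree zero induces a product grading on $\L*{I}$ under which $\L{I} \to \L*{I}$ has degree zero (a sum of zeros), and which must therefore coincide with the grading fixed in the paper. The degree of the $\pi = e$ contribution is then the sum of the 1-d contributions, namely $\sum_a (i_a - j_a) = \rk{I} - \rk{J}$, yielding $\kk(\rk{I}-\rk{J})$ as claimed.

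The ``in particular'' statement follows readily from the main formula: degree-$0$ elements are the idempotents $\mathrm{id}_{\L*{I}}$; degree-$1$ elements correspond to pairs $(I, J)$ with $J$ obtained from $I$ by a single increment $j_a = i_a + 1$; and any degree-$k$ morphism with $k \geq 2$ corresponds to $J$ obtained from $I$ by $k$ increments, which can be factored as a $k$-fold composition of single-increment morphisms, the composition being nonzero because in the $\pi = e$ piece the product structure on $\dgAus*{n}{d}$ is the tensor product of the nonvanishing 1-d compositions. The principal technical point in the plan is the appeal to additivity of gradings on product Lagrangians in the symmetric product: while this is a standard feature of the construction recalled in \Cref{subsec:global_gradings}, one must verify that the product grading assembled from the 1-d calibrations really does agree with the globally-fixed grading on $\L*{I}$, which is ensured by matching the identical defining property $\L{I} \to \L*{I}$ being of degree zero on both sides and is pinned down by the 1-d base case.
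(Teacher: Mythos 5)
Your ungraded computation is fine: the decomposition over $\SymGrp{d}$, the use of \eqref{eq:hom_vee-alt}, and the monotonicity argument forcing $\pi=e$ are all correct (and make explicit a point the paper leaves as ``it readily follows''); your $d=1$ long-exact-sequence argument is also correct and in the same spirit as the paper's use of \Cref{lemma:Auroux}. The gap is exactly the step you flag as the ``principal technical point'': the appeal to additivity of gradings on product Lagrangians, i.e.\ the claim that the degree of the $\pi=e$ generator of $\hom[\L*{J}][\L*{I}]$ is the sum of the degrees of its $d$ one-dimensional factors. This is not a formal feature of the construction and is nowhere established in the paper; indeed \Cref{prop:dgAus=dgBruhat:graded} shows that the $\pi$-component of $\hom[\L{I}][\L{J}]$ sits in degree $-\inv{\pi}$ even though every tensor factor there is concentrated in degree $0$, so degrees of generators between product Lagrangians are emphatically \emph{not} additive across factors in general. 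Geometrically the obstruction is that the holomorphic volume form on $\Sym{d}{\theDisk}\cong\mathbb{C}^d$ pulls back along $\theDisk^{\times d}\to\Sym{d}{\theDisk}$ to the Vandermonde factor times the product form, so phases of product Lagrangians carry a configuration-dependent correction; the correction happens to vanish for crossingless ($\pi=e$) configurations, but proving that is essentially the content of the lemma. Your argument uses the unproven additivity twice for $d\geq2$: once to assert that the product grading makes $\L{I}\to\L*{I}$ of degree zero ``as a sum of zeros'' (and hence, via uniqueness and the analogous identification of the \Cref{prop:grading_structures:LI} grading on $\L{I}$ with a product grading, agrees with the grading fixed in the paper), and once more to compute the degree of the $e$-generator as $\sum_a(i_a-j_a)$. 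So for $d\geq2$ the proposal assumes what it needs to prove.

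The paper avoids any such geometric input by arguing categorically inside $\WF{n}{d}$: for a single increment $J=(I\setminus\{i\})\cup\{i+1\}$ it crosses the $d=1$ exact triangles with the complementary product Lagrangian (parts (2) and (3) of \Cref{lemma:Auroux}) to obtain a morphism of exact triangles in $\WF{n}{d}$, and then chases the morphisms already known to have degree $0$ (from \Cref{prop:grading_structures:LI} and the defining normalisation of the gradings on $\L*{I}$, $\L*{J}$) to conclude that the connecting morphism forces $\L*{J}\to\L*{I}$ to have degree $1$; the general case follows by factoring $\L*{J}\to\L*{I}$ as a composite of $\rk{J}-\rk{I}$ single-increment morphisms. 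To repair your outline, replace the additivity appeal by this kind of relative argument -- your $d=1$ long-exact-sequence step can be run after taking the product of the relevant triangles with the fixed complementary Lagrangian, which is in effect what the paper does -- or else supply an actual index computation in the symmetric product justifying additivity for crossingless tuples; as written, the central step for $d\geq2$ is missing.
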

\begin{proof}
  Recall that, as (ungraded) $\kk$-modules,
  \[
    \hom[\L*{J}][\L*{I}]=\bigoplus_{\pi\in\SymGrp{d}}\hom[\L*{J}][\L*{I}]^\pi
  \]
  where
  \[
    \hom[\L*{J}][\L*{I}]^\pi=\hom[\L*{j_1}][\L*{i_{\pi(1)}}]\otimes\hom[\L*{j_2}][\L*{i_{\pi(2)}}]\otimes\cdots\otimes\hom[\L*{j_d}][\L*{i_{\pi(d)}}].
  \]
  Moreover, by \eqref{eq:hom_vee-alt} there are isomorphisms of (ungraded)
  $\kk$-modules
  \[
    \hom[\L*{j}][\L*{i}]\cong\begin{cases}
      \kk&\text{if }0\leq j-i\leq 1,\\
      0&\text{otherwise}.
    \end{cases}
  \]
  It readily follows that there are isomorphisms of (ungraded)
  $\kk$-modules
  \[
    \hom[\L*{J}][\L*{I}]\cong\begin{cases}
      \kk&\text{if }\forall a:0\leq j_a-i_a\leq 1,\\
      0&\text{otherwise}.
    \end{cases}
  \]
  It remains to show that, if $0\leq j_a-i_a\leq 1$, then the graded
  $\kk$-module $\hom[\L*{J}][\L*{I}]$ is in fact concentrated in degree
  $\rk{J}-\rk{I}\geq0$. We consider first the following special case: There
  exists an index $i\in I$ such that $J=(I\setminus\set{i})\cup\set{i+1}$. For
  simplicity, we set
  \[
    L=\prod_{k\in I\setminus\set{i}}\L{k}=\prod_{k\in J\setminus\set{i+1}}\L{k}
  \]
  and
  \[
    L^\vee=\prod_{k\in I\setminus\set{i}}\L*{k}=\prod_{k\in J\setminus\set{i+1}}\L*{k}.
  \]
  Consider now the morphism of exact triangles
  \[
    \begin{tikzcd}[column sep=small]
      \L{0,i}\rar\dar&\L{0,i+1}\rar\dar&\L{i,i+1}\dar[equals]\rar&\L{0,i}[1]\dar\\
      \L{i-1,i}\rar&\L{i-1,i+1}\rar&\L{i,i+1}\rar&\L{i-1,i}[1]      
    \end{tikzcd}    
  \]
  in the partially wrapped Fukaya category $\WF{n}{1}$, see
  \Cref{fig:morhphism} for an illustration depicting the relevant morphisms.
  \begin{figure}
    \includegraphics{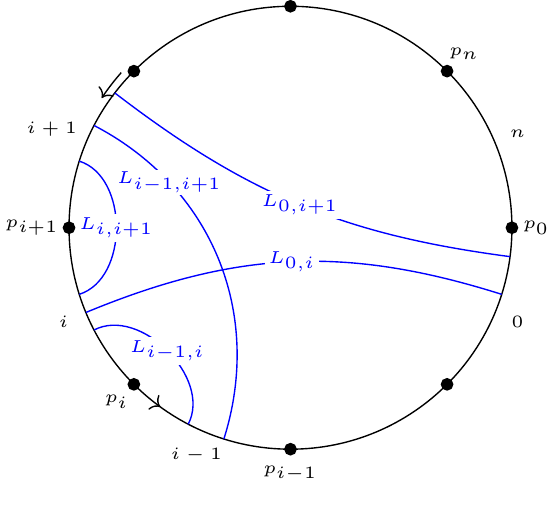}
    \caption{Auxiliary morphisms in the proof of \Cref{lemma:homIJ:rk}.}
    \label{fig:morhphism}
  \end{figure}
  We analyse the induced commutative diagram
  \[
    \begin{tikzcd}[column sep=small]
      \L{i}\times L\rar\dar&\L{i+1}\times L\rar\dar&\L{i+1}^\vee\times L\rar\dar&(\L{i}\times L)[1]\dar\\
      \L*{i}\times L^\vee\rar&\L{i-1,i+1}\times L^\vee\rar&\L*{i+1}\times L^\vee\rar&(\L*{i}\times L^\vee)[1]
    \end{tikzcd}
  \]
  which takes place in the partially wrapped Fukaya category $\WF{n}{d}$. We
  make the following observations:
  \begin{itemize}
  \item By assumption, the morphism $\L{i}\times L\to\L{i+1}\times L$ is of
    degree $0$ (see \Cref{prop:grading_structures:LI}). In particular, by
    \Cref{lemma:Auroux} there exists a grading structure on the Lagrangian
    $\L*{i+1}\times L$ such that the top row of the latter diagram is an exact
    triangle (with all morphisms of degree $0$).
  \item The
  morphisms
  \[
    \begin{tikzcd}[column sep=small]
      (\L{i}\times L)[1]\rar&(\L*{i}\times L^\vee)[1]
    \end{tikzcd}%
    \qquand%
    \begin{tikzcd}[column sep=small]
      \L{i+1}\times L\rar&\L*{i+1}\times L\rar&\L*{i+1}\times L^\vee
    \end{tikzcd}
  \]
  are of degree $0$ by our choice of grading structures on the Lagrangians
  \[
    \L*{I}=\L*{i}\times L^\vee\qquand\L*{J}=\L*{i+1}\times L^\vee.
  \]
  In particular the morphism
  \[
    \morphism{\L*{i+1}\times L\rar}{\L*{i+1}\times L^\vee}
  \]
  is of degree $0$ as well.
\item We conclude that the bottom row in the commutative square
  \[
    \begin{tikzcd}[column sep=small]
      \L{i+1}\times L\rar\dar&(\L{i}\times L)[1]\dar\\
      \L*{i+1}\times L^\vee\rar&(\L*{i}\times L^\vee)[1]
    \end{tikzcd}
  \]
  must be given by a morphism of degree $0$ and, consequently, the morphism
  $\L*{J}\to\L*{I}$ must be a morphism of degree $1=\rk{J}-\rk{I}$.
\end{itemize}
This proves the claim in this special case.

We now return to the general case of two subsets $I,J\in\mychoose{n}{d}$ such
that the inequality ${0\leq j_a-i_a\leq 1}$ is satisfied for all $1\leq a\leq d$.
Write
\[
  J=(I\cap J)\cup\set{j_{a_1},\dots,j_{a_r}}
\]
where $i_{a_t}\in I$ and $j_{a_t}-i_{a_t}=1$; note that $r=\rk{J}-\rk{I}$. The
apparent morphism $\L*{J}\to\L*{I}$ factorises as the composite of the $r$ morphisms
\[
  \begin{tikzcd}
    \displaystyle\prod_{t=1}^s\L*{i_{a_t}}\times\prod_{t=s+1}^r\L*{j_{a_t}}\times\prod_{k\in I\cap J}\L*{k}\rar&\displaystyle\prod_{t=1}^{s-1}\L*{i_{a_t}}\times\prod_{t=s}^{r}\L*{j_{a_t}}\times\prod_{k\in I\cap J}\L*{k},
  \end{tikzcd}
\]
indexed by $r\geq s\geq 1$. Since each of these morphisms has degree $1$ by the
previous argument, the morphism $\L*{J}\to\L*{I}$ has degree $r=\rk{J}-\rk{I}$,
which is what we needed to prove.
\end{proof}

\subsection{The quasi-equivalence $\WF{n}{d}\simeq\perf{\Aus{n}{n-d}}$}

\begin{notation}
  For $I\in\mychoose{n}{d}$ we introduce the (graded) Lagrangian
  \[
    \L{I}^\sharp\coloneqq\L*{I}[-\rk{I}]
  \]
  in $\Sym{d}{\theDisk\setminus\Stops[][n]}$ and define the differential graded $\kk$-algebra
  \[
    \dgAus{n}{d}^\sharp\coloneqq\bigoplus_{J,I}\hom[\L{J}^\sharp][\L{I}^\sharp].\qedhere
  \]
\end{notation}

The following proposition is an immediate consequence of \Cref{thm:Auroux}.

\begin{proposition}
  \label{prop:LIsharp:generate}
  The collection $\set{\L{I}^\sharp}[I\in\mychoose{n}{d}]$ generates the partially
  wrapped Fukaya category $\WF{n}{d}$ as an idempotent-complete triangulated
  $A_\infty$-category. Thus, there exists a quasi-equivalence of triangulated
  $A_\infty$-categories
  \[
    \functor{\perf{\dgAus{n}{d}^\sharp}}{\WF{n}{d},}[\simeq]\qquad\functor*{\Aus{n}{n-d}}{\bigoplus_I\L{I}^\sharp.}\qedhere
  \]
\end{proposition}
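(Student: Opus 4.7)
The plan is that the proposition follows essentially directly from Theorem~\ref{thm:Auroux}, in parallel with Propositions~\ref{prop:LI:generate} and~\ref{prop:LIv:generate}. My first observation is that $\L{I}^\sharp=\L*{I}[-\rk{I}]$ and $\L*{I}$ share the \emph{same} underlying Lagrangian submanifold $\prod_{i\in I}L_{i-1,i}$ of $\Sym{d}{\theDisk\setminus\Stops[][n]}$; they differ only by a shift of grading structure. Since the shift functor preserves triangulated subcategories, the collection $\set{\L{I}^\sharp}[I\in\mychoose{n}{d}]$ classically generates $\WF{n}{d}$ if and only if $\set{\L*{I}}[I\in\mychoose{n}{d}]$ does, and the latter was established in Proposition~\ref{prop:LIv:generate}.

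Equivalently, one may invoke Theorem~\ref{thm:Auroux} directly: the arcs $L_{i-1,i}$, $1\leq i\leq n$, are pairwise disjoint and properly embedded in $\theDisk\setminus\Stops[][n]$, and their complement is a union of $n+1$ disks each containing exactly one stop. Auroux's theorem then asserts that the ${n\choose d}$ product Lagrangians $\L*{I}=\prod_{i\in I}L_{i-1,i}$, and hence their re-grading $\L{I}^\sharp$, classically generate $\WF{n}{d}$ as an idempotent-complete triangulated $A_\infty$-category.

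For the quasi-equivalence, I would apply the standard $A_\infty$-Yoneda construction: whenever a collection of objects classically generates an idempotent-complete triangulated $A_\infty$-category, the tensor-product functor
\[
  M\longmapsto M\otimes_{\dgAus{n}{d}^\sharp}\left(\bigoplus_I\L{I}^\sharp\right)
\]
provides a quasi-equivalence $\perf{\dgAus{n}{d}^\sharp}\xrightarrow{\simeq}\WF{n}{d}$ sending the free rank-one right module to $\bigoplus_I\L{I}^\sharp$. The appearance of $\Aus{n}{n-d}$ in the display of the statement anticipates the isomorphism $\dgAus{n}{d}^\sharp\cong\Aus{n}{n-d}$ which is the subject of the next theorem; no such identification is needed in the proof of the present proposition. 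Consequently I expect no substantive obstacle here — the only real content is the observation that shifting a collection of generators does not change what it generates.
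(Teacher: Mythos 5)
Your proposal is correct and matches the paper's treatment: the paper states this proposition without further proof as an immediate consequence of \Cref{thm:Auroux}, precisely because the $\L{I}^\sharp$ have the same underlying Lagrangians $\prod_{i\in I}L_{i-1,i}$ as the $\L*{I}$, whose complement in $\theDisk$ is a union of $n+1$ disks each containing one stop. Your additional remark that regrading does not affect generation, and the standard derived Morita argument for the quasi-equivalence, are exactly the intended (implicit) justification.
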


In this section we establish the following quasi-equivalence.

\begin{theorem}
  \label{thm:dgAus-sharp:AusShiftedKoszul}
  Let $n\geq d\geq 1$. There is an isomorphism of differential graded
  $\kk$-algebras $\dgAus{n}{d}^\sharp\cong\Aus{n}{n-d}$. Thus, there is a
  quasi-equivalence of triangulated $A_\infty$-categories
  \[
    \functor{\perf{\Aus{n}{n-d}}}{\WF{n}{d},}[\simeq]\qquad\functor*{\Aus{n}{n-d}}{\bigoplus_I\L{I}^\sharp.}\qedhere
  \]
\end{theorem}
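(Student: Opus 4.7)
The plan is to identify the differential graded $\kk$-algebra $\dgAus{n}{d}^\sharp$ directly by leveraging the shift relation $\L{I}^\sharp = \L*{I}[-\rk{I}]$ together with the rank and degree computations of \Cref{lemma:homIJ:rk}, and then to match the resulting combinatorial data with $\Aus{n}{n-d}$ via the bijection $\mychoose{n}{d} \xrightarrow{\cong} \mychoose{n}{n-d}$, $I \mapsto \tilde{I} \coloneqq \n \setminus I$, exactly as hinted at in the introduction.

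First, since $\L{I}^\sharp = \L*{I}[-\rk{I}]$, the morphism complex $\hom[\L{J}^\sharp][\L{I}^\sharp]$ differs from $\hom[\L*{J}][\L*{I}]$ by a cohomological shift of $\rk{J}-\rk{I}$. By \Cref{lemma:homIJ:rk}, $\hom[\L*{J}][\L*{I}]$ is a rank-$1$ free $\kk$-module concentrated in cohomological degree $\rk{J}-\rk{I}$ precisely when $0 \leq j_a - i_a \leq 1$ for every $1 \leq a \leq d$, and vanishes otherwise. Consequently $\hom[\L{J}^\sharp][\L{I}^\sharp]$ is concentrated in degree $0$, so $\dgAus{n}{d}^\sharp$ is an ungraded $\kk$-algebra whose underlying $\kk$-module is free with basis indexed by those pairs $(J, I)$ satisfying the pointwise condition above. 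A direct combinatorial verification then shows that, under the bijection $I \mapsto \tilde{I}$, for $I \leq J$ in $\mychoose{n}{d}$ this pointwise condition is equivalent to $\tilde{J} \leq \tilde{I}$ in $\mychoose{n}{n-d}$ together with the non-vanishing requirement that no index $b$ satisfies $\tilde{i}_b \geq \tilde{j}_{b+1}$. In other words, the basis elements of $\dgAus{n}{d}^\sharp$ correspond bijectively to the surviving generators $f_{\tilde{I}\tilde{J}}$ in the monomial presentation of $\Aus{n}{n-d}$.

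It remains to match the multiplications. Given nonzero morphisms $\alpha \colon \L{K}^\sharp \to \L{J}^\sharp$ and $\beta \colon \L{J}^\sharp \to \L{I}^\sharp$, the composite $\beta \circ \alpha$ lies in the rank-$1$ free $\kk$-module $\hom[\L{K}^\sharp][\L{I}^\sharp]$, which is itself nonzero because the pointwise condition composes; moreover $\beta \circ \alpha$ is a unit multiple of the chosen generator of the target, as the corresponding composite of degree-$1$ elements in the Koszul dual $\dgAus*{n}{d} \cong \Aus*{n}{d}$ is a nonzero path in the path-algebra presentation of $\Aus*{n}{d}$. Hence, up to signs, composition in $\dgAus{n}{d}^\sharp$ matches the product $f_{\tilde{I}\tilde{J}} \cdot f_{\tilde{J}\tilde{K}} = f_{\tilde{I}\tilde{K}}$ in $\Aus{n}{n-d}$. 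The main obstacle is normalising these signs uniformly; this can be accomplished by the same inductive rescaling argument used in the proof of \Cref{prop::grading_structures:e_poset}: one chooses generators of each rank-$1$ hom-module and rescales them by $\pm 1$ along a spanning subposet, starting from the minimal element of $\mychoose{n}{d}$ and fixing compositions one convex piece at a time. Since the combinatorial shape of the composition diagram is the same, the argument transfers mutatis mutandis. Combining the resulting isomorphism $\dgAus{n}{d}^\sharp \cong \Aus{n}{n-d}$ of ungraded $\kk$-algebras with \Cref{prop:LIsharp:generate} yields the desired quasi-equivalence.
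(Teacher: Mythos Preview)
Your proposal is correct and follows essentially the same route as the paper: both use \Cref{lemma:homIJ:rk} together with the shift $\L{I}^\sharp=\L*{I}[-\rk{I}]$ to see that $\dgAus{n}{d}^\sharp$ is concentrated in degree $0$ (the paper records this separately as \Cref{lemma:Aus-sharp:deg0}), translate the pointwise condition $0\le j_a-i_a\le1$ into the non-vanishing condition for $f_{\tilde I\tilde J}$ in $\Aus{n}{n-d}$ via the complement bijection (the paper isolates this as \Cref{lemma:veers-coveers}), and then invoke \Cref{prop:LIsharp:generate}. You are more explicit about normalising the signs in the multiplication, appealing to the inductive rescaling of \Cref{prop::grading_structures:e_poset}, whereas the paper simply asserts that ``comparing the multiplication laws'' gives the isomorphism; your extra care here is warranted but does not change the argument.
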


\begin{lemma}
  \label{lemma:Aus-sharp:deg0}
  The graded $\kk$-algebra $\dgAus{n}{d}^\sharp$ is concentrated in degree $0$.
\end{lemma}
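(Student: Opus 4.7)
The plan is to reduce the statement directly to \Cref{lemma:homIJ:rk} via the definition $\L{I}^\sharp\coloneqq\L*{I}[-\rk{I}]$. Since the shift functor on $\WF{n}{d}$ acts on morphism complexes in the standard way---lowering the degree of the source by $\rk{J}$ raises morphism degrees by $\rk{J}$, while lowering the degree of the target by $\rk{I}$ lowers morphism degrees by $\rk{I}$---there is an isomorphism of graded $\kk$-modules
\[
  \hom[\L{J}^\sharp][\L{I}^\sharp]^{k} \;\cong\; \hom[\L*{J}][\L*{I}]^{k+\rk{J}-\rk{I}}
\]
for every $I,J\in\mychoose{n}{d}$ and every $k\in\ZZ$.

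By \Cref{lemma:homIJ:rk}, the complex $\hom[\L*{J}][\L*{I}]$ vanishes unless $0\leq j_a-i_a\leq 1$ for all $1\leq a\leq d$, in which case it is a free $\kk$-module of rank one concentrated in cohomological degree $\rk{J}-\rk{I}$. Substituting this into the displayed isomorphism shows that $\hom[\L{J}^\sharp][\L{I}^\sharp]^{k}$ can be non-zero only when $k+\rk{J}-\rk{I}=\rk{J}-\rk{I}$, that is only when $k=0$. Hence every morphism complex appearing in $\dgAus{n}{d}^\sharp$ is concentrated in degree $0$, and consequently so is the graded $\kk$-algebra $\dgAus{n}{d}^\sharp=\bigoplus_{J,I}\hom[\L{J}^\sharp][\L{I}^\sharp]$.

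There is no substantive obstacle here beyond confirming the sign of the degree shift, which is a purely conventional matter that can be read off from the definition of the shift functor on a $\ZZ$-graded Fukaya category. Indeed, the renormalisation $\L{I}^\sharp=\L*{I}[-\rk{I}]$ has been designed precisely so that the non-trivial degree $\rk{J}-\rk{I}$ produced by \Cref{lemma:homIJ:rk} is exactly absorbed by the shift, which is the entire point of introducing the sharp-normalised generators in the first place.
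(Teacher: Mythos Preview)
Your argument is correct and is essentially identical to the paper's own proof: both simply unwind the shift in the definition $\L{I}^\sharp=\L*{I}[-\rk{I}]$ and invoke \Cref{lemma:homIJ:rk}. The only difference is notational---you track degrees componentwise in $k$, whereas the paper packages the same computation as a global twist $\hom[\L*{J}][\L*{I}](\rk{J}-\rk{I})$.
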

\begin{proof}
  Let $I,J\in\mychoose{n}{d}$. By \Cref{lemma:homIJ:rk}, the graded $\kk$-module
  $\hom[\L*{J}][\L*{I}]$ either vanishes or is concentrated in degree
  $\rk{J}-\rk{I}$. Consequently, the graded $\kk$-module
  \begin{align*}
    \hom[\L{J}^\sharp][\L{I}^\sharp] &=\hom[\L*{J}[-\rk{J}]][\L*{I}[-\rk{I}]]\\
                                     &\cong\hom[\L*{J}][\L*{I}[\rk{J}-\rk{I}]]\\
                                     &\cong\hom[\L*{J}][\L*{I}](\rk{J}-\rk{I})
  \end{align*}
  is concentrated in degree $0$. The claim follows.
\end{proof}

The proof of the following combinatorial statement is straightforward and is
left to the reader.

\begin{lemma}
  \label{lemma:veers-coveers}
  Let $n>d\geq 1$ and $I,J\in\mychoose{n}{d}$. We let
  \[
    I^\circ=\n\setminus I=\set{u_1<\cdots<u_{n-d}}\qquand J^\circ=\n\setminus
    J=\set{v_1<\cdots<v_{n-d}}.
  \]
  The inequalities $0\leq j_a-i_a\leq 1$ are satisfied for all $1\leq a\leq d$
  if and only if $J^\circ\leq I^\circ$ and $v_b<u_{b+1}$ for all $1\leq
  b<n-d$.
\end{lemma}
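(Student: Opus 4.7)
The plan is to reduce the claimed combinatorial equivalence to a statement about prefix sums of characteristic vectors, using the complement duality $P_k(I^\circ) = k - P_k(I)$, where $P_k(X) \coloneqq |X \cap \{1, \ldots, k\}|$ for $X \subseteq \n$ (with $P_0 \coloneqq 0$).

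First, I would reformulate the source condition $0 \leq j_a - i_a \leq 1$ as a pair of prefix-sum inequalities. The lower half $i_a \leq j_a$ for every $a$ is just the standard encoding of the product order on $\mychoose{n}{d}$, giving $P_k(J) \leq P_k(I)$ for all $1 \leq k \leq n$. The upper bound $j_a \leq i_a + 1$ for every $a$ is equivalent to $P_k(J) \geq P_{k-1}(I)$ for every $k \geq 1$: for the forward implication, set $a \coloneqq P_{k-1}(I)$, so that $i_a \leq k-1$ and hence $j_a \leq i_a + 1 \leq k$, giving $P_k(J) \geq a = P_{k-1}(I)$; for the converse, evaluate the prefix-sum inequality at $k = i_a + 1$ to obtain $P_{i_a+1}(J) \geq P_{i_a}(I) = a$, which reads $j_a \leq i_a + 1$.

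Next, I would dualize under the complement involution. Using $P_k(I^\circ) = k - P_k(I)$ (and analogously for $J$), the first prefix-sum inequality $P_k(J) \leq P_k(I)$ becomes $P_k(J^\circ) \geq P_k(I^\circ)$, which is precisely $J^\circ \leq I^\circ$ in $\mychoose{n}{n-d}$; the second inequality becomes $P_k(J^\circ) \leq P_{k-1}(I^\circ) + 1$ for every $k \geq 1$. Evaluating the latter at $k = v_{b+1}$ (where $P_{v_{b+1}}(J^\circ) = b+1$) gives $P_{v_{b+1}-1}(I^\circ) \geq b$, which unravels to the stated adjacency condition on consecutive elements of $I^\circ$ and $J^\circ$. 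For the converse, at positions $k$ not of the form $v_{b+1}$ one has $P_k(J^\circ) = P_{k-1}(J^\circ)$, so the prefix-sum bound propagates from its values at the jumps of $P(J^\circ)$ by a one-step induction on $k$.

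The whole argument is bookkeeping with prefix sums; no geometric or Floer-theoretic input is required. I expect the only real friction to be pinning down boundary cases ($k = 1$, $b = 0$, $b = n - d - 1$) and checking that the asymmetric ``shift by at most one'' condition on $(I, J)$ translates correctly to the corresponding asymmetric adjacency condition on $(I^\circ, J^\circ)$ under complementation, so that the resulting characterization matches precisely the vanishing relation of $\Aus{n}{n-d}$ needed in \Cref{thm:dgAus-sharp:AusShiftedKoszul}.
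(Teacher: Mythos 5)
The paper offers no proof of this lemma to compare against (it is explicitly left to the reader), so the only question is whether your argument works, and in substance it does: the three equivalences you use --- $i_a\leq j_a$ for all $a$ iff $P_k(J)\leq P_k(I)$ for all $k$; $j_a\leq i_a+1$ for all $a$ iff $P_k(J)\geq P_{k-1}(I)$ for all $k\geq 1$; and the complementation identity $P_k(I^\circ)=k-P_k(I)$ --- are all correct, and the converse direction at the jumps of $P_\bullet(J^\circ)$ goes through (in fact no induction is needed: if $P_k(J^\circ)=b+1\geq 2$ then $v_{b+1}\leq k$ together with $u_b<v_{b+1}$ gives $P_{k-1}(I^\circ)\geq b$ directly).

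The one genuine problem is the step claiming that evaluating $P_k(J^\circ)\leq P_{k-1}(I^\circ)+1$ at $k=v_{b+1}$ ``unravels to the stated adjacency condition''. It unravels to $u_b\leq v_{b+1}-1$, i.e.\ $u_b<v_{b+1}$, which has the roles of $I^\circ$ and $J^\circ$ interchanged relative to the printed condition $v_b<u_{b+1}$. The two are not equivalent, even granting $J^\circ\leq I^\circ$: for $n=4$, $d=2$, $I=\set{1,2}$, $J=\set{3,4}$ one has $J^\circ=\set{1,2}\leq\set{3,4}=I^\circ$ and $v_1=1<u_2=4$, yet $j_1-i_1=2$. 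So the lemma as literally printed is false, while the version your computation actually produces ($u_b<v_{b+1}$ for all $1\leq b<n-d$) is the correct characterisation; note that, given $J^\circ\leq I^\circ$, your condition implies the printed one but not conversely. It is also the version the paper needs: under $I\mapsto I^\circ$ it is exactly the condition $\forall\,b\colon u_b<v_{b+1}$ singling out the surviving basis elements $f_{I^\circ J^\circ}$ of $\Aus{n}{n-d}$, which is what the proof of \Cref{thm:dgAus-sharp:AusShiftedKoszul} uses. So your approach is sound and essentially complete, but you should state the conclusion you actually derive and flag the transposition in the lemma's statement as a typo rather than assert that your inequality matches it; as written, that final identification asserts something false.
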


We are ready to prove \Cref{thm:dgAus-sharp:AusShiftedKoszul}.

\begin{proof}[Proof of \Cref{thm:dgAus-sharp:AusShiftedKoszul}]
  Firstly, by \Cref{lemma:homIJ:rk} and \Cref{lemma:Aus-sharp:deg0} there are
  isomorphisms of $\kk$-modules
  \[
    \hom[\L{J}^\sharp][\L{I}^\sharp]\cong\begin{cases}
      \kk\cdot g_{IJ}&\text{if }\forall\ a:0\leq j_a-i_a\leq 1,\\
      0&\text{otherwise}.
    \end{cases}
  \]
  Let
  \[
    I^\circ=\n\setminus I=\set{u_1<\cdots<u_{n-d}}\qquand J^\circ=\n\setminus
    J=\set{v_1<\cdots<v_{n-d}}.
  \]
  By \Cref{lemma:veers-coveers} the condition that $0\leq j_a-i_a\leq 1$ for all
  $1\leq a\leq d$ is equivalent to the condition that $J^\circ\leq I^\circ$ and
  $v_b<u_{b+1}$ for all $1\leq b<n-d$. Keeping in mind the poset
  anti-isomorphism $\mychoose{n}{d}\to\mychoose{n}{n-d}$, $I\mapsto I^\circ$, we
  see that the underlying $\kk$-modules of the $\kk$-algebras
  $\dgAus{n}{d}^\sharp$ and $\Aus{n}{n-d}$ can be identified. Comparing the
  multiplication laws on both sides, we conclude that there is an isomorphism of
  $\kk$-algebras
  \[
    \dgAus{n}{d}^\sharp=\bigoplus_{J,I}\hom[\L{J}^\sharp][\L{I}^\sharp]\cong\Aus{n}{n-d}.
  \]
  The existence of the required quasi-equivalence now follows from
  \Cref{prop:LIsharp:generate}.
\end{proof}

\subsection{The Serre functor and Iyama's cluster tilting subcategory of $\WF{n}{d}$}

In this section we give a simple geometric description of the Serre functor on
the partially wrapped Fukaya category $\WF{n}{d}$ and use it to describe a
distinguished subcategory first investigated by Iyama in the context of higher
Auslander--Reiten theory. Throughout this subsection we assume that $\kk$ is a
field.

\subsubsection{Geometric description of the Serre functor on $\WF{n}{d}$}

For definiteness, let $\theDisk$ be the $2$-dimensional unit disk and
$\Stops[][n]$ the set of $(n+1)$-st roots of unity. Observe that there is a
symplectomorphism $\theDisk\to\theDisk$, given by counter-clockwise rotation by
$\frac{2\pi}{n+1}$, which cyclically permutes the set $\Stops[][n]$ of stops.
Passing to symmetric products we obtain a symplectomorphism
\[
  \morphism[r]{\Sym{d}{\theDisk}}{\Sym{d}{\theDisk}}[\cong]
\]
which preserves $\Stops[d][n]$. To extract an autoequivalence of $\WF{n}{d}$ from $r$ we need to
lift it to a \emph{graded} symplectomorphism; since $H^1(\Sym{d}{\theDisk})=0$, there is no
obstruction to the existence of such a graded lift and all possible graded lifts of $r$ form
$\ZZ$-torsor (see Lemma~2.4 in \cite{Sei00}). Each graded lift of $r$ induces an autoequivalence
\[
  \functor[\mathfrak{r}]{\WF{n}{d}}{\WF{n}{d};}[\simeq]
\]
different choices of graded lifts of $r$ induce autoequivalences that differ
only by a power of the shift functor on $\WF{n}{d}$. The next proposition shows
that a particular graded lift of $r$ gives rise to an autoequivalence
$\mathfrak{r}$ of $\WF{n}{d}$ that agrees with its Serre functor on $\WF{n}{d}$.

\begin{proposition}
  \label{prop:Serre}
  Let $\kk$ be a field. Let $\theDisk$ be the $2$-dimensional unit disk and
  $\Stops[][n]$ the set of $(n+1)$-st roots of unity. Let
  $r\colon\theDisk\to\theDisk$ by the symplectomorphism given by
  counter-clockwise rotation by $\frac{2\pi}{n+1}$. There exists a graded lift
  of the induced symplectomorphism
  \[
    \morphism[r]{\Sym{d}{\theDisk}}{\Sym{d}{\theDisk}}[\cong]
  \]
  such that the induced diagram of $A_\infty$-functors
  \[
    \begin{tikzcd}
      \WF{n}{d}\rar{\simeq}\dar[dotted]{\mathfrak{r}}&\perf{\Aus{n}{d}}\dar{\Serre}\\
      \WF{n}{d}\rar{\simeq}&\perf{\Aus{n}{d}}
    \end{tikzcd}
  \]
  commutes, where
  \[
    \functor[\Serre=-\otimes_{\Aus{n}{d}}^{\mathbb{L}}\Hom[\Aus{n}{d}][\kk][\kk]]{\perf{\Aus{n}{d}}}{\perf{\Aus{n}{d}}}[\simeq]
  \]
  is the Serre functor on $\perf{\Aus{n}{d}}$ and both horizontal
  quasi-equivalences are the quasi-equivalence from \Cref{thm:WF-Auslander}.
\end{proposition}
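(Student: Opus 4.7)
The plan is to verify that, for an appropriate choice of graded lift of $r$, the induced autoequivalence $\mathfrak{r}$ of $\WF{n}{d}$ agrees, under the quasi-equivalence of \Cref{thm:WF-Auslander}, with the Nakayama functor $\nu = -\otimes_{\Aus{n}{d}}^{\mathbb{L}} D\Aus{n}{d}$ (where $D=\Hom_\kk(-,\kk)$); this functor is precisely the Serre functor on $\perf{\Aus{n}{d}}$ over a field, since $\Aus{n}{d}$ is homologically smooth and proper (see \Cref{lemma:smooth_and_proper}). By \Cref{prop:LI:generate} and the uniqueness of Serre functors on triangulated $A_\infty$-categories, it suffices to match $\mathfrak{r}$ with $\Serre$ on the generating collection $\set{\L{I}}[I\in\mychoose{n}{d}]$ and on the morphisms between these generators.

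The first step is purely geometric. Counter-clockwise rotation by $2\pi/(n+1)$ cyclically permutes $\Stops[][n]$ and hence the boundary components of $\theDisk\setminus\Stops[][n]$, sending the component labelled $k$ to the one labelled $k-1$ (indices modulo $n+1$). Consequently, each arc $L_{0,i_a}$ is sent to an arc isotopic to $L_{i_a-1,n}$, and, on symmetric products, the product Lagrangian $\L{I}=\prod_a L_{0,i_a}$ is sent to $\prod_a L_{i_a-1,n}$.

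The second step is to identify $\prod_a L_{i_a-1,n}$, equipped with a suitable grading structure, with the indecomposable injective $\Aus{n}{d}$-module $\mathbb{I}_I$ corresponding to $I\in\mychoose{n}{d}$. For $d=1$ this is a direct consequence of \Cref{lemma:Auroux}\eqref{it:Auroux:triangle:1} applied to the triple $0\leq i-1<n$: the induced exact triangle $L_{0,i-1}\to L_{0,n}\to L_{i-1,n}\to L_{0,i-1}[1]$ corresponds, under the dictionary of \Cref{thm:WF-Auslander}, to the exact triangle realising $\mathbb{I}_i$ as the cone of the natural morphism $P_{i-1}\to P_n$ (with the convention $P_0=0$ when $i=1$). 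For general $d$, one reduces to the $d=1$ case by iteratively applying \Cref{lemma:Auroux}\eqref{it:Auroux:triangle} to resolve $\prod_a L_{i_a-1,n}$ as a twisted complex built from the generators $\L{J}$, and by matching term-by-term with an explicit projective resolution of $\mathbb{I}_I$ over $\Aus{n}{d}$.

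The third step is to fix the graded lift of $r$: since the ambiguity is a global integer shift, the content lies in checking that the relative Maslov indices between the rotated Lagrangians $\mathfrak{r}(\L{I})$ across varying $I$ agree with the relative homological shifts between the injectives $\mathbb{I}_I$ imposed by $\nu$. This can be verified by computing the degree of the canonical Reeb-chord morphism $\L{I}\to\mathfrak{r}(\L{I})$ induced by rotation and comparing with the natural morphism $P_I\to\mathbb{I}_I$ in $\perf{\Aus{n}{d}}$. The principal technical obstacle lies precisely in this grading bookkeeping: the winding numbers of the rotated Lagrangians around the various stops---which control the Maslov grading---must be reconciled with the grading prescription of \Cref{prop:grading_structures:LI}, and one must verify that a single overall shift suffices to enforce $\mathfrak{r}\simeq\Serre$ on all generators simultaneously. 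Once this is achieved, matching on morphisms becomes formal via the explicit description of $\dgAus{n}{d}$ underlying \Cref{thm:WF-Auslander}, and commutativity of the stated diagram follows.
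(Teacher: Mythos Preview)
Your proposal is correct and follows essentially the same strategy as the paper's proof: compute the geometric action of the rotation on the product Lagrangians $\L{I}=\prod_a L_{0,i_a}$, obtaining $\prod_a L_{i_a-1,n}$; then use \Cref{lemma:Auroux} iteratively to exhibit this object as a twisted complex of the generators $\L{J}$ and match it with the minimal projective resolution of the indecomposable injective $D(\Aus{n}{d}f_{II})$, which the paper does by citing the explicit resolutions in \cite{OT12} and \cite{JK19a} (your \Cref{prop:resolutions}-style argument accomplishes the same); finally, absorb the residual global shift into the choice of graded lift of $r$. The paper is terser about the grading bookkeeping and does not separately discuss the action on morphisms---it relies on the fact that matching the standard resolutions already pins down the functor on the generating subcategory---but the underlying argument is the same as yours.
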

\begin{proof}
  
  Let $I\in\mychoose{n+1}{d+1}$ be such that $0\in I$ and $f_{II}$ the
  corresponding idempotent in $\Aus{n}{d}$. The image of the right module
  $f_{II}\Aus{n}{d}$ under the composite
  \[
    \begin{tikzcd}
      \perf{\Aus{n}{d}}\rar{\simeq}&\WF{n}{d}\rar{\mathfrak{r}}&\WF{n}{d}
    \end{tikzcd}
  \]
  is the object
  \[
    \mathfrak{r}\L{I}=\prod_{a=1}^drL_{0,i_a}=\prod_{a=1}^dL_{i_a-1,n}
\]
Iterated application of \Cref{lemma:Auroux} yields a quasi-isomorphism $\mathfrak{r}\L{I}\simeq\L{rI}$ where
\[
  rI=\set{i_1-1,\dots,i_d-1,n}\in\mychoose{n+1}{d+1}.
\]
It is straightforward to verify that the standard resolution of the
object $\L{rI}$ provided by \Cref{prop:resolutions} corresponds to the minimal
projective resolution of the injective right $\Aus{n}{d}$-module
$D(\Aus{n}{d}f_{II})$ as described for example in Proposition 3.17 in
\cite{OT12} or Proposition 2.7 in \cite{JK19a}. It follows that the restriction
of the composite
\[
  \begin{tikzcd}
    \perf{\Aus{n}{d}}\rar{\simeq}&\WF{n}{d}\rar{\mathfrak{r}}&\WF{n}{d}\rar{\simeq}&\perf{\Aus{n}{d}}
  \end{tikzcd}
\]
to the full subcategory of $\perf{\Aus{n}{d}}$ spanned by the regular
representation $\Aus{n}{d}$ agrees with the Serre functor on $\perf{\Aus{n}{d}}$
up to a power of the shift, corresponding to a choice of a graded lift of the
symplectomorphism $r\colon\Sym{d}{\theDisk}\to\Sym{d}{\theDisk}$. The claim
follows.
\end{proof}

\begin{remark}
  The geometric description of the Serre functor on $\WF{n}{d}$ given in
  \Cref{prop:Serre} makes it apparent that its
  $(n+1)$-iteration must be a power of the shift functor; indeed, the
  symplectomorphism $r^{n+1}\colon\theDisk\to\theDisk$ is the identity. More
  precisely, there is a quasi-isomorphism
  \[
    \Serre^{n+1}\simeq[d(n-d)]
  \]
  of functors $\WF{n}{d}\to \WF{n}{d}$ expressing the known
  \emph{fractionally} Calabi--Yau property of the quasi-equivalent category
  $\perf{\Aus{n}{d}}$, see \cite{HI11a} for details. Note that the above power
  of the shift is invariant under the passage $d\mapsto n-d$, in agreement with
  the quasi-equivalence $\WF{n}{d}\simeq\WF{n}{n-d}$ from
  \Cref{thm:WF-Auslander:Koszul}.
\end{remark}

\subsubsection{Geometric description of Iyama's $d$-cluster-tilting subcategory
  of $\WF{n}{d}$}

\begin{notation}
  For an idempotent complete $A_\infty$-category $\A$ admitting finite direct
  sums and a collection $\X$ of objects of $\A$, we let $\operatorname{add}\X$
  be the smallest full subcategory of $\A$ which is idempotent complete, is
  closed under finite direct sums, and contains $\X$.
\end{notation}

As explained in the introduction, one of the main results
in \cite{Iya11}, Theorem~1.18 therein, shows that the higher Auslander algebra
$\Aus{n}{d}$ is a $d$-Auslander algebra in the sense that it satisfies the
inequalities
\[
  \gldim\Aus{n}{d}\leq d\leq\domdim\Aus{n}{d}.
\]
As a consequence of the Auslander--Iyama correspondence~\cite{Iya07a} and
Theorem~1.23 in \cite{Iya11}, the subcategory
\[
  \U(\Aus{n}{d})\coloneqq\operatorname{add}\set{\mathfrak{\Serre}_d^{k}(\Aus{n}{d})}[k\in\ZZ]
\]
is a so-called \emph{$d\ZZ$-cluster tilting subcategory} \cite{Iya07,IJ17} of
the triangulated category $H^0(\perf{\Aus{n}{d}})$, where
\[
  \functor[\Serre=-\otimes_{\Aus{n}{d}}^\mathbb{L}\Hom[\Aus{n}{d}][\kk][\kk]]%
  {\perf{\Aus{n}{d}}}{\perf{\Aus{n}{d}}}[\simeq]
\]
is the Serre functor and $\Serre[d]=\Serre{[-d]}$. By definition, this means
that a perfect complex $X$ lies in $\U(\Aus{n}{d})$ if and
only if
\[
  \forall k\not\in d\ZZ\text{ and }\forall M\in\U(\Aus{n}{d}):\Hom[X][M[k]]=0
\]
if and only if
\[
  \forall k\not\in d\ZZ\text{ and }\forall M\in\U(\Aus{n}{d}):\Hom[M][X[k]]=0.
\]
This kind of subcategory plays an important role in higher Auslander--Reiten
theory and in a higher-dimensional version of homological algebra. For example,
as shown by Gei\ss, Keller and Oppermann \cite{GKO13}, the additive category
$H^0(\U(\Aus{n}{d}))$, equipped with the $d$-fold shift functor $[d]$, has the
structure of a $(d+2)$-angulated category. Similarly. the subcategory
\[
  \M(\Aus{n}{d})\coloneqq\U(\Aus{n}{d})\cap\mmod{\Aus{n}{d}}\subseteq\mmod{\Aus{n}{d}}
\]
is a $d\ZZ$-cluster tilting subcategory of the abelian category
$\mmod{\Aus{n}{d}}$ of finite-dimensional $\Aus{n}{d}$-modules, which implies
that $\M(\Aus{n}{d})$ is a $d$-abelian category in the sense of \cite{Jas16}. In
some sense, the homological properties of the above subcategories can be thought of
as `witnesses' of the higher-dimensional nature of the $\kk$-algebra
$\Aus{n}{d}$.

Under the quasi-equivalence $\perf{\Aus{n}{d}}\simeq\WF{n}{d}$ from
\Cref{thm:WF-Auslander}, the subcategory $\U(\Aus{n}{d})$ of $\perf{\Aus{n}{d}}$
induces a $d\ZZ$-cluster tilting subcategory of the partially wrapped Fukaya
category $\WF{n}{d}$ which we describe below.

\begin{proposition}
  \label{prop:d-CT}
  Let $\kk$ be a field. Fix grading structures on the Lagrangians
  $\set{\L{I}}[I\in\mychoose{n}{d}]$ as in \Cref{prop:grading_structures:LI}.
  Fix the grading structure on the symplectomorphism
  $r\colon\Sym{d}{\theDisk}\to\Sym{d}{\theDisk}$ from \Cref{prop:Serre}, so that
  the induced autoequivalence
  \[
    \functor[\mathfrak{r}]{\WF{n}{d}}{\WF{n}{d}}[\simeq]
  \]
  is the Serre functor, and define $\mathfrak{r}_d\coloneqq\mathfrak{r}[-d]$.
  The full subcategory
  \[
    \UF{n}{d}\coloneqq\operatorname{add}\set{\mathfrak{r}_d^{k}(\L{I})}[I\in\mychoose{n}{d},\
    k\in\ZZ]
  \]
  of $\WF{n}{d}$ is a $d\ZZ$-cluster tilting subcategory of $H^0(\WF{n}{d})$.
\end{proposition}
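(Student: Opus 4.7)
The plan is to deduce the statement directly from the corresponding theorem of Iyama on the algebraic side, transporting it across the quasi-equivalence of \Cref{thm:WF-Auslander} and using \Cref{prop:Serre} to identify the two Serre functors. Concretely, the property of being a $d\ZZ$-cluster tilting subcategory is formulated purely in terms of Hom-vanishing conditions and closure under finite direct sums and summands, so it is invariant under triangulated equivalence; therefore it suffices to verify that under the quasi-equivalence
\[
  \functor{\perf{\Aus{n}{d}}}{\WF{n}{d},}[\simeq]\qquad\functor*{\Aus{n}{d}}{\bigoplus_I\L{I}}
\]
the subcategory $\U(\Aus{n}{d})$ of Iyama is sent to $\UF{n}{d}$.

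First I would unpack the generators: the right $\Aus{n}{d}$-module $\Aus{n}{d}$ decomposes as $\bigoplus_{I\in\mychoose{n}{d}} f_{II}\Aus{n}{d}$, and by the explicit form of the quasi-equivalence in \Cref{thm:WF-Auslander} the indecomposable projective $f_{II}\Aus{n}{d}$ corresponds to the Lagrangian $\L{I}$. Consequently
\[
  \U(\Aus{n}{d})=\operatorname{add}\set{\Serre_d^k(\Aus{n}{d})}[k\in\ZZ]
  =\operatorname{add}\set{\Serre_d^k(f_{II}\Aus{n}{d})}[I\in\mychoose{n}{d},\;k\in\ZZ].
\]
Next I would invoke \Cref{prop:Serre}, which furnishes a graded lift of $r$ whose induced autoequivalence $\mathfrak{r}$ of $\WF{n}{d}$ is intertwined with the Serre functor $\Serre$ on $\perf{\Aus{n}{d}}$ by the quasi-equivalence; consequently $\mathfrak{r}_d=\mathfrak{r}[-d]$ corresponds to $\Serre_d=\Serre[-d]$. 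Applying this intertwining to the generators yields that $\operatorname{add}\set{\mathfrak{r}_d^k(\L{I})}[I,k]=\UF{n}{d}$ is the essential image of $\U(\Aus{n}{d})$ under the quasi-equivalence.

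To conclude, I would cite Theorem~1.23 in \cite{Iya11} (together with \cite{Iya07,IJ17}), which establishes that $\U(\Aus{n}{d})$ is a $d\ZZ$-cluster tilting subcategory of $H^0(\perf{\Aus{n}{d}})$. Since a triangulated equivalence preserves graded Hom-spaces, shifts, finite direct sums, and direct summands, the two equivalent characterisations
\[
  \forall k\notin d\ZZ,\;\forall M\in\UF{n}{d}:\Hom[X][M[k]]=0
  \qquand
  \forall k\notin d\ZZ,\;\forall M\in\UF{n}{d}:\Hom[M][X[k]]=0
\]
cutting out $\UF{n}{d}\subseteq H^0(\WF{n}{d})$ are transported term by term from the analogous conditions cutting out $\U(\Aus{n}{d})\subseteq H^0(\perf{\Aus{n}{d}})$, proving the claim.

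The only non-routine ingredient is the compatibility of the two Serre functors, but this is precisely the content of \Cref{prop:Serre}, so no new work is required here; the remaining argument is a formal transfer along the quasi-equivalence.
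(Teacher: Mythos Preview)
Your proposal is correct and follows essentially the same approach as the paper: both transport Iyama's result that $\U(\Aus{n}{d})$ is a $d\ZZ$-cluster tilting subcategory across the quasi-equivalence of \Cref{thm:WF-Auslander}, using \Cref{prop:Serre} to match $\mathfrak{r}_d$ with $\Serre_d$ and hence identify $\UF{n}{d}$ with the image of $\U(\Aus{n}{d})$. The paper's version is terser, but the logical content is the same.
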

\begin{proof}
  Note that $\Aus{n}{d}\mapsto\bigoplus_I \L{I}$ under the quasi-equivalence
  $\perf{\Aus{n}{d}}\simeq\WF{n}{d}$ from \Cref{thm:WF-Auslander}. Moreover,
  \Cref{prop:Serre} implies the existence of a commutative diagram
  \[
    \begin{tikzcd}
      \perf{\Aus{n}{d}}\rar{\simeq}\dar{\Serre[d]}&\WF{n}{d}\dar{\mathfrak{r}_d}\\
      \perf{\Aus{n}{d}}\rar{\simeq}&\WF{n}{d}
    \end{tikzcd}
  \]
  Therefore $\U(\Aus{d}n)$ corresponds to $\UF{n}{d}$ under the
  quasi-equivalence ${\perf{\Aus{n}{d}}\simeq\WF{n}{d}}$. The claim follows.
\end{proof}

\begin{remark}
  The indecomposable objects in the $d\ZZ$-cluster tilting subcategory
  $\UF{n}{d}$ of $\WF{n}{d}$ are represented by the $d$-fold shifts of the
  Lagrangians $\set{\L{I}}[I\in\mychoose{n_+}{d+1}]$ (equipped with grading
  structures as in \Cref{prop:resolutions}). Since $\UF{n}{d}$ is a
  $d\ZZ$-cluster tilting subcategory of the partially wrapped Fukaya category
  $\WF{n}{d}$, an object $X$ lies in the $d\ZZ$-cluster tilting subcategory
  $\UF{n}{d}$ if and only if
  \[
    \forall k\not\in d\ZZ\text{ and }\forall L\in\UF{n}{d}:\Hom[X][L[k]]=0
  \]
  if and only if
  \[
    \forall k\not\in d\ZZ\text{ and }\forall L\in\UF{n}{d}:\Hom[L][X[k]]=0.
  \]
  In particular, given $I,J\in\mychoose{n+1}{d+1}$, the extension space
  $\Hom[\L{I}][\L{J}[k]]$ vanishes for all $k\not\in d\ZZ$.
\end{remark}

\begin{remark}
  Oppermann and Thomas \cite{OT12} provide a beautiful classification of the
  Miyashita tilting modules \cite{Miy86} which belong to the $d$-cluster tilting
  subcategory
  \[
    \M(\Aus{n}{d})=\U(\Aus{n}{d})\cap\mmod{\Aus{n}{d}}
  \]
  of $\mmod{\Aus{n}{d}}$ in terms of triangulations of a $2d$-dimensional cyclic
  polytope with $n+d+1$ vertices (we remind the reader that $\Aus{n}{d}$ is
  associated with the Dynkin type $\AA_{n-d+1}$); under this bijection, the
  so-called `mutation' of tilting modules corresponds to the bistellar flip of
  triangulations (see \cite{Ram97} or Section 6.1 in \cite{DLRS10} for further
  information on cyclic polytopes and their triangulations). Under the
  quasi-equivalence $\WF{n}{d}\simeq\perf{\Aus{n}{d}}$ from
  \Cref{thm:WF-Auslander}, this representation-theoretic procedure provides
  numerous generators for the partially wrapped Fukaya category $\WF{n}{d}$
  whose derived endomorphism algebra has its cohomology concentrated in degree
  $0$.
\end{remark}

\begin{remark}
  In the above discussion it is essential that we work over a field rather than
  over an arbitrary commutative ring. Recall that, for each $d\geq 1$, there is
  a quasi-equivalence
  \[
    \perf{\kk}\simeq\WF{d}{d}.
  \]
  If $\kk$ is a field, the triangulated category $H^0(\perf{\kk})$ has a
  $d\ZZ$-cluster tilting subcategory
  \[
    \operatorname{add}\set{\kk[-dk]}[k\in\ZZ]\subseteq\perf{\kk}
  \]
  for each $d\geq 1$, where we use that the Serre functor on $\perf{\kk}$ is the
  identity functor. In contrast, if $\kk=\ZZ$ it is easy to see that
  $H^0(\perf{\ZZ})$ does not admit a $d\ZZ$-cluster tilting subcategory for
  $d>1$ (a $1\ZZ$-cluster tilting subcategory always exists
  and coincides with the ambient category).
\end{remark}

\subsection{Examples}

We conclude this section with some examples to illustrate our results.

\subsubsection{The quasi-equivalence $\WF{3}{1}\simeq\WF{3}{2}$}

For simplicity, we let $\kk$ be a field. We illustrate the quasi-equivalence
$\WF{n}{d}\simeq\WF{n}{n-d}$ in the simplest non-trivial case $n=3$ and $d=1$.

The following diagram depicts the so-called Auslander--Reiten quiver
\cite{Hap88} of the triangulated category $H^0(\WF{3}{1})$ where, for
simplicity, we write $ij$ in place of the Lagrangian $L_{ij}$:
\begin{center}
  \begin{tikzpicture}[rotate=-45,scale=1.5]
    \node (m3m1) at (-2.75,-0.75) {$\cdots$};%
    \node (m30) at (-3,0) {${01}[-1]$};%

    \node (m2m1) at (-2,-1) {${23}[-2]$};%
    \node (m20) at (-2,0) {${02}[-1]$};%
    \node (m21) at (-2,1) {${12}[-1]$};%

    \node (m10) at (-1,0) {${03}[-1]$};%
    \node (m11) at (-1,1) {${13}[-1]$};%
    \node (m12) at (-1,2) {${23}[-1]$};%

    \node[draw,rectangle,inner sep=2] (01) at (0,1) {${01}$};%
    \node[draw,rectangle,inner sep=2] (02) at (0,2) {${02}$};%
    \node[draw,rectangle,inner sep=2] (03) at (0,3) {${03}$};%

    \node[draw,rectangle,inner sep=2] (12) at (1,2) {${12}$};%
    \node[draw,rectangle,inner sep=2] (13) at (1,3) {${13}$};%
    \node (14) at (1,4) {${01}[1]$};%

    \node[draw,rectangle,inner sep=2] (23) at (2,3) {${23}$};%
    \node (24) at (2,4) {${02}[1]$};%
    \node (25) at (2,5) {${12}[1]$};%
    
    \node (34) at (3,4) {${03}[1]$};%
    \node (35) at (3,5) {${13}[1]$};%
    \node (36) at (3,6) {${23}[1]$};%

    \node (45) at (4,5) {${01}[2]$};%
    \node (46) at (3.75,5.75) {$\cdots$};%

    \path[commutative diagrams/.cd,every arrow] (m2m1)--(m20);%
    \path[commutative diagrams/.cd,every arrow] (m20)--(m21);%
    
    \path[commutative diagrams/.cd,every arrow] (m10)--(m11);%
    \path[commutative diagrams/.cd,every arrow] (m11)--(m12);%

    \path[commutative diagrams/.cd,every arrow] (01)--(02);%
    \path[commutative diagrams/.cd,every arrow] (02)--(03);%

    \path[commutative diagrams/.cd,every arrow] (12)--(13);%
    \path[commutative diagrams/.cd,every arrow] (13)--(14);%

    \path[commutative diagrams/.cd,every arrow] (23)--(24);%
    \path[commutative diagrams/.cd,every arrow] (24)--(25);%

    \path[commutative diagrams/.cd,every arrow] (34)--(35);%
    \path[commutative diagrams/.cd,every arrow] (35)--(36);%

    \path[commutative diagrams/.cd,every arrow] (m30)--(m20);%
    \path[commutative diagrams/.cd,every arrow] (m20)--(m10);%

    \path[commutative diagrams/.cd,every arrow] (m21)--(m11);%
    \path[commutative diagrams/.cd,every arrow] (m11)--(01);%

    \path[commutative diagrams/.cd,every arrow] (m12)--(02);%
    \path[commutative diagrams/.cd,every arrow] (02)--(12);%

    \path[commutative diagrams/.cd,every arrow] (03)--(13);%
    \path[commutative diagrams/.cd,every arrow] (13)--(23);%

    \path[commutative diagrams/.cd,every arrow] (14)--(24);%
    \path[commutative diagrams/.cd,every arrow] (24)--(34);%

    \path[commutative diagrams/.cd,every arrow] (25)--(35);%
    \path[commutative diagrams/.cd,every arrow] (35)--(45);%

    \path[dotted] (m2m1)--(m10);%

    \draw[dotted](m2m1)--(m10)--(01)--(12)--(23)--(34)--(45);%
    \draw[dotted](m3m1)--(m20)--(m11)--(02)--(13)--(24)--(35)--(46);%
    \draw[dotted](m30)--(m21)--(m12)--(03)--(14)--(25)--(36);%
  \end{tikzpicture}
\end{center}
The vertices of quiver correspond to the indecomposable objects in
$H^0(\WF{3}{1})$; the arrows correspond to a $\kk$-basis of the space of
irreducible morphisms, that is the non-isomorphisms which cannot be expressed as
a non-trivial composite of non-isomorphisms. The dotted lines indicate the
apparent commutativity and zero relations. We have chosen grading structures on
the above Lagrangians so that all depicted morphisms have degree $0$. The action
of the derived Auslander--Reiten translation $\Serre[1]=\Serre{[-1]}$ is given
by left horizontal translation. Finally, we have encircled the six
indecomposable objects which belong to the heart of the $t$-structure induced by
the quasi-equivalence $\WF{3}{1}\simeq\perf{\Aus{3}{1}}$; these objects form a
complete set of representative of the isomorphism classes of indecomposable
objects in $\WF{3}{1}$ up to the action of the shift functor.

We remind the reader that, for $0\leq i<j<k\leq n$, there is an equivalence
$L_{ij}\times L_{ik}\simeq L_{ij}\times L_{jk}$ in $\WF{n}{2}$ induced by the
exact triangle
\[
  \begin{tikzcd}[column sep=small]
    L_{ij}\rar&L_{ik}\rar&L_{jk}\rar&L_{ij}[1]
  \end{tikzcd}
\]
in the partially wrapped Fukaya category $\WF{n}{1}$, see \Cref{lemma:Auroux}.
The following diagram depicts the Auslander--Reiten quiver of the triangulated
category $H^0(\WF{3}{2})$ where, for simplicity, we write $ij,k\ell$ in place of
the Lagrangian $L_{ij}\times L_{k\ell}$:
\begin{center}
  \begin{tikzpicture}[rotate=-45,scale=1.5]
    \node (m3m1) at (-2.75,-0.75) {$\cdots$};%
    \node (m30) at (-3,0) {$(02,23)[-1]$};%

    \node[draw,rectangle,inner sep=2,fill=gray,fill opacity=0.25,text opacity=1] (m2m1) at (-2,-1) {$01,12$};%
    \node (m20) at (-2,0) {$(03,12)[-1]$};%
    \node[draw,rectangle,inner sep=2,fill=gray,fill opacity=0.25,text opacity=1] (m21) at (-2,1) {$01,13$};%

    \node (m10) at (-1,0) {$(12,23)[-1]$};%
    \node[draw,rectangle,inner sep=2] (m11) at (-1,1) {$01,23$};%
    \node (m12) at (-1,2) {$(01,12)[1]$};%

    \node[draw,rectangle,inner sep=2,fill=gray,fill opacity=0.25,text opacity=1] (01) at (0,1) {${02,23}$};%
    \node (02) at (0,2) {$03,12$};%
    \node[draw,rectangle,inner sep=2,fill=gray,fill opacity=0.25,text opacity=1] (03) at (0,3) {$12,23$};%

    \node (12) at (1,2) {$(01,13)[1]$};%
    \node (13) at (1,3) {$(01,23)[1]$};%
    \node (14) at (1,4) {$(02,23)[1]$};%

    \node[rectangle,inner sep=2,fill=gray,fill opacity=0.25,text opacity=1] (23) at (2,3) {$(01,12)[2]$};%
    \node (24) at (2,4) {$(03,12)[1]$};%
    \node[rectangle,inner sep=2,fill=gray,fill opacity=0.25,text opacity=1] (25) at (2,5) {$(01,13)[2]$};%
    
    \node (34) at (3,4) {$(12,23)[1]$};%
    \node (35) at (3,5) {$(01,23)[2]$};%
    \node (36) at (3,6) {$(01,12)[3]$};%

    \node[rectangle,inner sep=2,fill=gray,fill opacity=0.25,text opacity=1] (45) at (4,5) {$(02,23)[2]$};%
    \node (46) at (3.75,5.75) {$\cdots$};%

    \path[commutative diagrams/.cd,every arrow](m2m1)--(m20);%
    \path[commutative diagrams/.cd,every arrow](m20)--(m21);%
    
    \path[commutative diagrams/.cd,every arrow](m10)--(m11);%
    \path[commutative diagrams/.cd,every arrow](m11)--(m12);%

    \path[commutative diagrams/.cd,every arrow](01)--(02);%
    \path[commutative diagrams/.cd,every arrow](02)--(03);%

    \path[commutative diagrams/.cd,every arrow](12)--(13);%
    \path[commutative diagrams/.cd,every arrow](13)--(14);%

    \path[commutative diagrams/.cd,every arrow](23)--(24);%
    \path[commutative diagrams/.cd,every arrow](24)--(25);%

    \path[commutative diagrams/.cd,every arrow](34)--(35);%
    \path[commutative diagrams/.cd,every arrow](35)--(36);%

    \path[commutative diagrams/.cd,every arrow](m30)--(m20);%
    \path[commutative diagrams/.cd,every arrow](m20)--(m10);%

    \path[commutative diagrams/.cd,every arrow](m21)--(m11);%
    \path[commutative diagrams/.cd,every arrow](m11)--(01);%

    \path[commutative diagrams/.cd,every arrow](m12)--(02);%
    \path[commutative diagrams/.cd,every arrow](02)--(12);%

    \path[commutative diagrams/.cd,every arrow](03)--(13);%
    \path[commutative diagrams/.cd,every arrow](13)--(23);%

    \path[commutative diagrams/.cd,every arrow](14)--(24);%
    \path[commutative diagrams/.cd,every arrow](24)--(34);%

    \path[commutative diagrams/.cd,every arrow](25)--(35);%
    \path[commutative diagrams/.cd,every arrow](35)--(45);%

    \path[dotted](m2m1)--(m10);%

    \draw[dotted](m2m1)--(m10)--(01)--(12)--(23)--(34)--(45);%
    \draw[dotted](m3m1)--(m20)--(m11)--(02)--(13)--(24)--(35)--(46);%
    \draw[dotted](m30)--(m21)--(m12)--(03)--(14)--(25)--(36);%
  \end{tikzpicture}
\end{center}
Again, we have chosen grading structures on the above Lagrangians so that all
depicted morphisms have degree $0$. Finally, we have encircled the five
indecomposable objects which belong to the heart of the $t$-structure induced by
the quasi-equivalence $\WF{3}{2}\simeq\perf{\Aus{3}{2}}$; together with the
Lagrangian $L_{03}\times L_{12}$, these objects form a complete set of
representative of the isomorphism classes of indecomposable objects in
$\WF{3}{1}$ up to the action of the shift functor. The indecomposable objects in
the $2\ZZ$-cluster tilting subcategory $\U_3^{(2)}$ of the triangulated category
$H^0(\WF{3}{2})$ are highlighted; notice that this subcategory is spanned by the
(finite direct sums of) even shifts of the four highlighted objects in the
heart.

\subsubsection{The $2\ZZ$-cluster tilting subcategory of $H^0(\WF{4}{2})$}

The diagram below depicts a complete set of representatives of the isomorphism
classes of indecomposable objects in the $2\ZZ$-cluster tilting subcategory
$\UF{4}{2}$ of the triangulated category $H^0(\WF{4}{2})$, see \Cref{prop:d-CT}
(all other objects in this subcategory are obtained as even shifts of those
below):
\begin{center}
  \tdplotsetmaincoords{-20}{0}%
  \begin{tikzpicture}[scale=3,tdplot_main_coords,every node/.style={scale=0.75}]
    \tdplotsetrotatedcoords{0}{60}{180}
    \begin{scope}[tdplot_rotated_coords]
      \node[draw,rectangle] (123) at (1,2,3) {$\L{01}\times\L{02}$};%
      \node[draw,rectangle] (124) at (1,2,4) {$\L{01}\times\L{03}$};%
      \node[draw,rectangle] (125) at (1,2,5) {$\L{01}\times\L{04}$};%

      \node (133) at (1,3,3) {$0$};%
      \node[draw,rectangle] (134) at (1,3,4) {$\L{02}\times\L{03}$};%
      \node[draw,rectangle] (135) at (1,3,5) {$\L{02}\times\L{04}$};%

      \node (144) at (1,4,4) {$0$};%
      \node[draw,rectangle] (145) at (1,4,5) {$\L{03}\times\L{04}$};%

      \node (223) at (2,2,3) {$0$};%
      \node (224) at (2,2,4) {$0$};%
      \node (225) at (2,2,5) {$0$};%

      \node (233) at (2,3,3) {$0$};%
      \node (234) at (2,3,4) {$\L{12}\times\L{13}$};%
      \node (235) at (2,3,5) {$\L{12}\times\L{14}$};%

      \node (244) at (2,4,4) {$0$};%
      \node (245) at (2,4,5) {$\L{13}\times\L{14}$};%

      \node (334) at (3,3,4) {$0$};%
      \node (344) at (3,4,4) {$0$};%
      \node (335) at (3,3,5) {$0$};%

      \node (345) at (3,4,5) {$\L{23}\times\L{24}$};%
    \end{scope}

    \path[commutative diagrams/.cd,every arrow](123)--(124);%
    \path[commutative diagrams/.cd,every arrow](123)--(133);%
    \path[commutative diagrams/.cd,every arrow](124)--(125);%
    \path[commutative diagrams/.cd,every arrow](124)--(134);%
    \path[commutative diagrams/.cd,every arrow](125)--(135);%
    \path[commutative diagrams/.cd,every arrow](133)--(134);%
    \path[commutative diagrams/.cd,every arrow](134)--(135);%
    \path[commutative diagrams/.cd,every arrow](134)--(144);%
    \path[commutative diagrams/.cd,every arrow](135)--(145);%
    \path[commutative diagrams/.cd,every arrow](144)--(145);%

    \path[commutative diagrams/.cd,every arrow](124)--(224);%
    \path[commutative diagrams/.cd,every arrow](125)--(225);%
    \path[commutative diagrams/.cd,every arrow](134)--(234);%
    \path[commutative diagrams/.cd,every arrow](135)--(235);%
    \path[commutative diagrams/.cd,every arrow](144)--(244);%
    \path[commutative diagrams/.cd,every arrow](145)--(245);%
    \path[commutative diagrams/.cd,every arrow](245)--(345);%

    \path[commutative diagrams/.cd,every arrow](224)--(234);%
    \path[commutative diagrams/.cd,every arrow](225)--(235);%
    \path[commutative diagrams/.cd,every arrow](234)--(244);%
    \path[commutative diagrams/.cd,every arrow](235)--(245);%

    \path[commutative diagrams/.cd,every arrow](224)--(225);%
    \path[commutative diagrams/.cd,every arrow](234)--(235);%
    \path[commutative diagrams/.cd,every arrow](244)--(245);%

    \path[commutative diagrams/.cd,every arrow](235)--(335);%
    \path[commutative diagrams/.cd,every arrow](335)--(345);%

    \path[commutative diagrams/.cd,every arrow](123)--(223);%
    \path[commutative diagrams/.cd,every arrow](133)--(233);%
    \path[commutative diagrams/.cd,every arrow](223)--(233);%

    \path[commutative diagrams/.cd,every arrow](223)--(224);%
    \path[commutative diagrams/.cd,every arrow](233)--(234);%

    \path[commutative diagrams/.cd,every arrow](234)--(334);%    
    \path[commutative diagrams/.cd,every arrow](334)--(344);%
    \path[commutative diagrams/.cd,every arrow](334)--(335);%    

    \path[commutative diagrams/.cd,every arrow](244)--(344);%    
    \path[commutative diagrams/.cd,every arrow](344)--(345);%    
  \end{tikzpicture}
\end{center}
The generators $\set{\L{I}}$ of the partially wrapped Fukaya category
$\WF{4}{2}$ are encircled. In the above diagram, all squares commute; note,
however, that in the $A_\infty$-category $\WF{4}{2}$ there are further higher
operations which witness the fact the apparent rectilinear cubes are
bicartesian.

\subsubsection{Explicit examples in the case $n=5$}

We conclude this section by displaying the endomorphism algebras
\[
  H^*(\dgAus{n}{d})\cong\Aus{n}{d}\qquand\dgAus{n}{d}^\sharp\cong\Aus{n}{n-d}
\]
in the case $n=5$ for all $1\leq d<n$.

We begin with the $\kk$-algebras $H^*(\dgAus{n}{d})\cong\Aus{n}{d}$. For $d=1$
we obtain the $\kk$-algebra with generators
\[
  \begin{tikzcd}[column sep=small]
    L_{1}\rar&L_{2}\rar&L_{3}\rar&L_{4}\rar&L_{5}
  \end{tikzcd}
\]
with no relations between the above morphisms. For $d=2,3$ we obtain the
$\kk$-algebras with generators
\[
  \begin{tikzcd}[column sep=small]
    L_{1}\times L_{2}\rar\dar&L_{1}\times L_{3}\rar\dar&L_{1}\times
    L_{4}\rar\dar&L_{1}\times L_{5}\dar\\
    0\rar&L_{2}\times L_{3}\rar\dar&L_{2}\times L_{4}\rar\dar&L_{2}\times
    L_{5}\dar\\
    &0\rar&L_{3}\times L_{4}\rar\dar&L_{3}\times L_{5}\dar\\
    &&0\rar&L_{4}\times L_{5}
  \end{tikzcd}
\]
and
\begin{center}
  \tdplotsetmaincoords{-20}{0}%
  \begin{tikzpicture}[scale=3,tdplot_main_coords,every node/.style={scale=0.75}]
    \tdplotsetrotatedcoords{0}{60}{180}
    \begin{scope}[tdplot_rotated_coords]
      \node (123) at (1,2,3) {$\L{1}\times\L{2}\times\L{3}$};%
      \node (124) at (1,2,4) {$\L{1}\times\L{2}\times\L{4}$};%
      \node (125) at (1,2,5) {$\L{1}\times\L{2}\times\L{5}$};%

      \node (133) at (1,3,3) {$0$};%
      \node (134) at (1,3,4) {$\L{1}\times\L{3}\times\L{4}$};%
      \node (135) at (1,3,5) {$\L{1}\times\L{3}\times\L{5}$};%

      \node (144) at (1,4,4) {$0$};%
      \node (145) at (1,4,5) {$\L{1}\times\L{4}\times\L{5}$};%

      \node (224) at (2,2,4) {$0$};%
      \node (225) at (2,2,5) {$0$};%

      \node (234) at (2,3,4) {$\L{2}\times\L{3}\times\L{4}$};%
      \node (235) at (2,3,5) {$\L{2}\times\L{3}\times\L{5}$};%

      \node (244) at (2,4,4) {$0$};%
      \node (245) at (2,4,5) {$\L{2}\times\L{4}\times\L{5}$};%

      \node (335) at (3,3,5) {$0$};%

      \node (345) at (3,4,5) {$\L{3}\times\L{4}\times\L{5}$};%
    \end{scope}

    \path[commutative diagrams/.cd,every arrow](123)--(124);%
    \path[commutative diagrams/.cd,every arrow](123)--(133);%
    \path[commutative diagrams/.cd,every arrow](124)--(125);%
    \path[commutative diagrams/.cd,every arrow](124)--(134);%
    \path[commutative diagrams/.cd,every arrow](125)--(135);%
    \path[commutative diagrams/.cd,every arrow](133)--(134);%
    \path[commutative diagrams/.cd,every arrow](134)--(135);%
    \path[commutative diagrams/.cd,every arrow](134)--(144);%
    \path[commutative diagrams/.cd,every arrow](135)--(145);%
    \path[commutative diagrams/.cd,every arrow](144)--(145);%

    \path[commutative diagrams/.cd,every arrow](124)--(224);%
    \path[commutative diagrams/.cd,every arrow](125)--(225);%
    \path[commutative diagrams/.cd,every arrow](134)--(234);%
    \path[commutative diagrams/.cd,every arrow](135)--(235);%
    \path[commutative diagrams/.cd,every arrow](144)--(244);%
    \path[commutative diagrams/.cd,every arrow](145)--(245);%
    \path[commutative diagrams/.cd,every arrow](245)--(345);%

    \path[commutative diagrams/.cd,every arrow](224)--(234);%
    \path[commutative diagrams/.cd,every arrow](225)--(235);%
    \path[commutative diagrams/.cd,every arrow](234)--(244);%
    \path[commutative diagrams/.cd,every arrow](235)--(245);%

    \path[commutative diagrams/.cd,every arrow](224)--(225);%
    \path[commutative diagrams/.cd,every arrow](234)--(235);%
    \path[commutative diagrams/.cd,every arrow](244)--(245);%

    \path[commutative diagrams/.cd,every arrow](235)--(335);%
    \path[commutative diagrams/.cd,every arrow](335)--(345);%

  \end{tikzpicture}
\end{center}
equipped with all possible commutativity relations. Finally, for $d=4$, we
obtain the $\kk$-algebra with generators
\[
  \begin{tikzcd}[column sep=small]
    \displaystyle\prod_{i\neq5}L_{i}\rar&\displaystyle\prod_{i\neq4}L_{i}\rar&\displaystyle\prod_{i\neq3}L_{i}\rar&\displaystyle\prod_{i\neq2}L_{i}\rar&\displaystyle\prod_{i\neq1}L_{i}
  \end{tikzcd}
\]
and such that all consecutive composites vanish. Note that this last quiver is
better drawn as a maximal path in a $4$-dimensional hypercube.

We continue with the $\kk$-algebras $\dgAus{n}{d}^\sharp\cong\Aus{n}{n-d}$; for
simplicity, we omit the shifts on the generators $\set{\L{I}^\sharp=\L*{I}[-\rk{I}]}$.
For $d=4$ we obtain the $\kk$-algebra with generators
\[
  \begin{tikzcd}[column sep=small]
    \displaystyle\prod_{i\neq1}L_{i-1,i}\rar&\displaystyle\prod_{i\neq2}L_{i-1,i}\rar&\displaystyle\prod_{i\neq3}L_{i-1,i}\rar&\displaystyle\prod_{i\neq4}L_{i-1,i}\rar&\displaystyle\prod_{i\neq5}L_{i-1,i}
  \end{tikzcd}
\]
and no relations between the above morphisms. For $d=3,2$ we obtain the
$\kk$-algebras with generators
\[
  \begin{tikzcd}[column sep=small]
    L_{23}\times L_{34}\times L_{45}\rar\dar&L_{12}\times L_{34}\times
    L_{45}\rar\dar&L_{12}\times
    L_{23}\times L_{45}\rar\dar&L_{12}\times L_{23}\times L_{34}\dar\\
    0\rar&L_{01}\times L_{34}\times L_{45}\rar\dar&L_{01}\times L_{23}\times
    L_{45}\rar\dar&L_{01}\times
    L_{23}\times L_{34}\dar\\
    &0\rar&L_{01}\times L_{12}\times L_{45}\rar\dar&L_{01}\times L_{12}\times L_{45}\dar\\
    &&0\rar&L_{01}\times L_{12}\times L_{23}\\
  \end{tikzcd}
\]
and
\begin{center}
  \tdplotsetmaincoords{-20}{0}%
  \begin{tikzpicture}[scale=3,tdplot_main_coords,every node/.style={scale=0.75}]
    \tdplotsetrotatedcoords{0}{60}{180}
    \begin{scope}[tdplot_rotated_coords]
      \node (123) at (1,2,3) {$\L{34}\times\L{45}$};%
      \node (124) at (1,2,4) {$\L{23}\times\L{45}$};%
      \node (125) at (1,2,5) {$\L{23}\times\L{34}$};%

      \node (133) at (1,3,3) {$0$};%
      \node (134) at (1,3,4) {$\L{12}\times\L{45}$};%
      \node (135) at (1,3,5) {$\L{12}\times\L{34}$};%

      \node (144) at (1,4,4) {$0$};%
      \node (145) at (1,4,5) {$\L{12}\times\L{34}$};%

      \node (224) at (2,2,4) {$0$};%
      \node (225) at (2,2,5) {$0$};%

      \node (234) at (2,3,4) {$\L{01}\times\L{45}$};%
      \node (235) at (2,3,5) {$\L{01}\times\L{34}$};%

      \node (244) at (2,4,4) {$0$};%
      \node (245) at (2,4,5) {$\L{01}\times\L{23}$};%

      \node (335) at (3,3,5) {$0$};%

      \node (345) at (3,4,5) {$\L{01}\times\L{12}$};%
    \end{scope}

    \path[commutative diagrams/.cd,every arrow](123)--(124);%
    \path[commutative diagrams/.cd,every arrow](123)--(133);%
    \path[commutative diagrams/.cd,every arrow](124)--(125);%
    \path[commutative diagrams/.cd,every arrow](124)--(134);%
    \path[commutative diagrams/.cd,every arrow](125)--(135);%
    \path[commutative diagrams/.cd,every arrow](133)--(134);%
    \path[commutative diagrams/.cd,every arrow](134)--(135);%
    \path[commutative diagrams/.cd,every arrow](134)--(144);%
    \path[commutative diagrams/.cd,every arrow](135)--(145);%
    \path[commutative diagrams/.cd,every arrow](144)--(145);%

    \path[commutative diagrams/.cd,every arrow](124)--(224);%
    \path[commutative diagrams/.cd,every arrow](125)--(225);%
    \path[commutative diagrams/.cd,every arrow](134)--(234);%
    \path[commutative diagrams/.cd,every arrow](135)--(235);%
    \path[commutative diagrams/.cd,every arrow](144)--(244);%
    \path[commutative diagrams/.cd,every arrow](145)--(245);%
    \path[commutative diagrams/.cd,every arrow](245)--(345);%

    \path[commutative diagrams/.cd,every arrow](224)--(234);%
    \path[commutative diagrams/.cd,every arrow](225)--(235);%
    \path[commutative diagrams/.cd,every arrow](234)--(244);%
    \path[commutative diagrams/.cd,every arrow](235)--(245);%

    \path[commutative diagrams/.cd,every arrow](224)--(225);%
    \path[commutative diagrams/.cd,every arrow](234)--(235);%
    \path[commutative diagrams/.cd,every arrow](244)--(245);%

    \path[commutative diagrams/.cd,every arrow](235)--(335);%
    \path[commutative diagrams/.cd,every arrow](335)--(345);%

  \end{tikzpicture}
\end{center}
with all possible commutativity relations. Finally, for $d=1$ we obtain the
$\kk$-algebra with generators
\[
  \begin{tikzcd}[column sep=small]
    L_{45}\rar&L_{34}\rar&L_{23}\rar&L_{12}\rar&L_{01}
  \end{tikzcd}
\]
and such that all consecutive composites vanish. Again, this last quiver is
better drawn as a maximal path in a $4$-dimensional hypercube.

\section{Partially wrapped Fukaya categories and models for Waldhausen $K$-theory}

In this section we provide an interpretation of the partially wrapped Fukaya categories
\[
  \WF{n}{d}=\W(\Sym{d}{\theDisk},\Stops[d][n]),\qquad d\geq1,\ n\geq0
\]
as the cells of a simplicial model for Waldhausen $K$-theory. This interpretation arises as an
immediate consequence of the results of this work combined with the results of \cite{DJW19b} which
provide a relation between the $d$-dimensional $S$-construction and the $d$-dimensional Auslander
algebras of type $\AA$. We use freely the language of $\infty$-categories \cite{Lur09} as well as
basic aspects of the theory of stable $\infty$-categories \cite{Lur17}.

\subsection{The $d$-dimensional Waldhausen $\wS$-construction}

\subsubsection{Stable $\infty$-categories v.s.~differential graded
  $\kk$-categories}

Recall that a pointed $\infty$-category $\A$ is \emph{stable} if it admits all finite (homotopy)
limits, all finite (homotopy) colimits, and the suspension functor
\[
  \functor[\Sigma]{\A}{\A,}\qquad\functor*{a}{0\amalg_a0}
\]
is an equivalence. If $\A$ is a stable $\infty$-category, then the homotopy
category $\Ho[\A]$, equipped with the suspension autoequivalence $\Ho[\Sigma]$,
is additive and can be endowed with a canonical triangulation. For this reason,
stable $\infty$-categories can be regarded as a refinement of Verdier's
triangulated categories. We recall from \cite{Coh13} that Lurie's differential
graded nerve \cite[Construction~1.3.1.6]{Lur17} yields a Quillen equivalence
between
\begin{itemize}
\item the homotopy theory of (small) idempotent-complete pre-triangulated
  differential graded $\kk$-categories (up to quasi-equivalence) and
\item the homotopy theory of (small) idempotent-complete \emph{$\kk$-linear}
  stable $\infty$-categories.
\end{itemize}
Thus, the theory of stable $\infty$-categories can be regarded as an extension
of the theory of differential graded categories which encompasses higher
categories which are not linear over any commutative ring, such as the stable
$\infty$-category of spectra. In addition, the language of $\infty$-categories
affords a powerful calculus of (homotopy) Kan extensions on which many of the
statements below are reliant upon.

Below, we identify differential graded $\kk$-categories with their differential
graded nerves without further mention. In this process, we implicitly replace
$\WF{n}{d}$ with a quasi-equivalent differential graded model.

\subsubsection{Waldhausen $K$-theory}

\begin{notation}
  For an $\infty$-category $\A$, we let $\A^\simeq\subseteq\A$ be the largest
  $\infty$-groupoid (=Kan complex) contained in $\A$. The passage
  $\A\mapsto\A^\simeq$ yields a right adjoint to the inclusion of
  $\infty$-groupoids into $\infty$-categories.
\end{notation}

An important invariant associated to a stable $\infty$-category $\A$ is its
Waldhausen $K$-theory space $K(\A)$. For example, if $R$ is a ring, then the
Waldhausen $K$-theory space $K(R)=K(\perf{R})$ of the stable $\infty$-category
$\perf{R}$ of perfect $R$-modules is homotopy equivalent to the algebraic
$K$-theory space of $R$ defined by Quillen \cite{Qui73} in terms of the
$Q$-construction of the (split-exact) category of finitely generated projective
$R$-modules.

Waldhausen's definition \cite{Wal85,BGT13} of the space $K(\A)$ involves the
construction of a simplicial $\infty$-groupoid $\wS[1][\bullet][\A]^\simeq$
whose geometric realisation is then the delooping of the Waldhausen $K$-theory
space of $\A$, that is
\[
  K(\A)\coloneqq\Omega^1|\wS[1][\bullet][\A]^\simeq|.
\]
More generally, for each integer $d\geq1$, Dyckerhoff~\cite{Dyc17} and
Poguntke~\cite{Pog17} introduce an analogous simplicial object
$\wS[d][\bullet][\A]$ such that
\[
  K(\A)\simeq\Omega^d|\wS[d][\bullet][\A]^\simeq|.
\]
For each $d\geq 1$, the \emph{$d$-dimensional Waldhausen $\wS$-construction
  $\wS[d][\bullet][\A]$ of $\A$} exhibits $K(\A)$ as the $d$-fold loop space of
a $(d-1)$-connected space or, equivalently, as a connective spectrum.
We remark that the simplicial object
\[
  \functor*[\wS[d][\bullet][\A]]{n}{\wS[d][n][\A]}
\]
takes its values in the $\infty$-category $\St$ of stable $\infty$-categories
and exact functors between them, see Section~1.4 in \cite{DJW19b} for details.

\begin{remark}
  The $d$-dimensional Waldhausen $\wS$-construction of a stable
  $\infty$-category is \emph{not} the $d$-fold iteration of the
  $\wS$-construction, which is a multi-simplicial object rather than a
  simplicial object.
\end{remark}

\subsection{The equivalence $\WF{n}{d}\simeq\wS[d][n]$}

The following theorem is an immediate consequence of Corollary~2.26 in
\cite{DJW19b} in the case ${\A=\perf{\kk}}$ and \Cref{thm:WF-Auslander} above. 

\begin{theorem}
  \label{thm:paracyclic_object}
  Let $d\geq1$. For each $n\geq0$, there are equivalences of $\infty$-categories
  \[
    \begin{tikzcd}
      \wS[d][n][\perf{\kk}]&\perf{\Aus{n}{d}}\lar[swap]{\simeq}\rar{\simeq}&\WF{n}{d}.
    \end{tikzcd}\qedhere
  \]
\end{theorem}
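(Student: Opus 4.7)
The plan is to assemble the two equivalences by citing the relevant results and verifying that they produce an identification at each simplicial level $n$. The statement is not really an independent theorem but a bookkeeping exercise pairing the representation-theoretic description of $\wS[d][n]$ from \cite{DJW19b} with the symplectic description established earlier in the paper.

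First, I would invoke Corollary~2.26 of \cite{DJW19b} specialised to the stable $\infty$-category $\A = \perf{\kk}$. That result produces, for each $n \geq 0$, a natural equivalence of stable $\infty$-categories
\[
  \wS[d][n][\perf{\kk}] \simeq \perf{\Aus{n}{d}},
\]
obtained by interpreting the $d$-dimensional Waldhausen $\wS$-construction at level $n$ as the $\infty$-category of functors from the poset $\mychoose{n}{d}$ into $\perf{\kk}$ satisfying the appropriate exactness conditions, which are classified by modules over the higher Auslander algebra $\Aus{n}{d}$. (Strictly, one passes through the differential graded nerve to compare the differential graded model of $\perf{\Aus{n}{d}}$ with its $\infty$-categorical counterpart, but this identification is standard via the Quillen equivalence of Cohn \cite{Coh13} recalled in the preceding subsection.)

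Second, I would apply \Cref{thm:WF-Auslander} established earlier in this paper, which supplies a quasi-equivalence
\[
  \functor{\perf{\Aus{n}{d}}}{\WF{n}{d}}[\simeq], \qquad \functor*{\Aus{n}{d}}{\textstyle\bigoplus_I \L{I}}
\]
of triangulated $A_\infty$-categories, and hence, after passing to differential graded nerves, an equivalence of stable $\infty$-categories. Composing the two equivalences produces the desired zig-zag of equivalences displayed in the theorem.

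The only subtlety—such as it is—lies in checking that the model-categorical passage between differential graded and $\infty$-categorical presentations respects the identifications at each level, so that the equivalences assemble coherently. Since \Cref{thm:WF-Auslander} is established via an explicit quasi-isomorphism $\dgAus{n}{d} \simeq \Aus{n}{d}$ of differential graded $\kk$-algebras, and Corollary~2.26 of \cite{DJW19b} is a levelwise statement, no further coherence argument is required at this stage; the paracyclic structure alluded to in the section title is addressed separately. The main conceptual point worth emphasising is that the result exhibits $\WF{n}{d}$ as a concrete symplectic-geometric model for the $n$-th cell of the $d$-dimensional Waldhausen $\wS$-construction of the ground ring, setting up the promised interplay between partially wrapped Fukaya categories and algebraic $K$-theory.
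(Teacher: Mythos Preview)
Your proposal is correct and matches the paper's approach exactly: the paper does not even write out a proof, stating only that the theorem is an immediate consequence of Corollary~2.26 in \cite{DJW19b} (specialised to $\A=\perf{\kk}$) together with \Cref{thm:WF-Auslander}. Your additional remarks about the passage through the differential graded nerve are accurate elaborations but go slightly beyond what the paper records.
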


\begin{remark}
  Let $d\geq1$. The equivalent stable $\infty$-categories
  \[
    \wS[d][n][\perf{\kk}]\simeq\perf{\Aus{n}{d}}\simeq\WF{n}{d}
  \]
  vanish if $n<d$. This vanishing is closely related to the fact that the space
  $|\wS[d][\bullet][\perf{\kk}]^{\simeq}|$ is $(d-1)$-connected, which is to say
  that its homotopy groups vanish in degrees less than or equal to $d$.
\end{remark}

In fact, the $d$-dimensional Waldhausen $\wS$-construction of a stable
$\infty$-category can be canonically extended from a simplicial object to a
\emph{paracyclic} object, see Proposition~2.47 in \cite{DJW19b}. In view of
\Cref{thm:paracyclic_object}, this implies that the partially wrapped Fukaya
categories $\WF{n}{d}$, $n\geq0$, themselves arrange into a paracyclic object
\[
  \functor[\WF{\bullet}{d}]{\ParacyclicCat^\op}{\St^\kk}
\]
with values in the $\infty$-category of $\kk$-linear stable $\infty$-categories
and exact functors between them, where $\ParacyclicCat$ is the paracyclic
category \cite{Nis90,FL91,GJ93}. Passing to homotopy categories yields a
paracylic object
\[
  \functor[\Ho[\WF{\bullet}{d}]]{\ParacyclicCat^\op}{\Ho[\St^\kk]}
\]
with values in the homotopy category of the $\infty$-category $\St^\kk$. Such a
paracylic object $\Ho[\WF{\bullet}{d}]$ amounts to the data of exact functors
\[
  \functor[d_0]{\WF{n+1}{d}}{\WF{n}{d}}\qquand\functor[s_0]{\WF{n}{d}}{\WF{n+1}{d}}
\]
and an (exact) autoequivalence
\[
  \functor[t=t_{n+1}]{\WF{n}{d}}{\WF{n}{d}}[\simeq]
\]
for each $n\geq 0$; these functors induce additional distinguished functors
\[
  \functor[d_i=t^id_0t^{-i}]{\WF{n+1}{d}}{\WF{n}{d}}\qquand\functor[s_i=t^is_0t^{-i}]{\WF{n}{d}}{\WF{n+1}{d}}
\]
for each $n\geq0$ and each $1\leq i\leq n$. In the homotopy category
$\Ho[\St^\kk]$, the above functors must satisfy the simplicial identities
\begin{equation*}
  \begin{split}
    d_0s_0&=1,\\
    d_1s_0&=1,
  \end{split}\qquad\qquad
  \begin{split}
    d_0d_i&=d_{i-1}d_0,\quad 1\leq i\leq n,\\
    d_0s_i&=s_{i-1}d_0,\quad 1\leq i<n,\\
  \end{split}\qquad\qquad
  \begin{split}
    s_0s_i&=s_{i+1}s_0,\quad 0\leq i\leq n,\\
    d_is_0&=s_0d_{i-1},\quad 1<i\leq n,
  \end{split}
\end{equation*}
as well as the paracyclic identities
\begin{equation*}
  d_0t^{n+1}=t^nd_0\qquand s_0t^{n+1}=t^{n+2}s_0.
\end{equation*}
By definition, the above identities encode the mere existence of natural
isomorphism between the corresponding functors. In contrast, the paracyclic
object $\WF{\bullet}{d}$ is a functor of $\infty$-categories and hence it
contains an infinite amount of coherence data (including explicit natural
isomorphisms witnessing the validity of the identities above).

\begin{remark}
  Let $d\geq1$. It follows from our previous discussion that there is a homotopy
  equivalence
  \[
    K(\WF{d}{d})\simeq K(\perf{\kk})\simeq\Omega^d|\WF{\bullet}{d}|,
  \]
  where we remind the reader that $\WF{d}{d}$ is equivalent to $\perf{\kk}$.
  More generally, if $(M,\Stops)$ is an arbitrary Weinstein manifold with stops,
  there are equivalences of stable $\infty$-categories
  \begin{align*}
    \W(M\times\Sym{d}{\theDisk},  (\Stops\times \Sym{d}{\theDisk}) \cup (M \times \Stops[d][n])    ) &\simeq\Fun[\WF{n}{d}][\W(M,\Stops)][][\kk] \\ &\simeq\Fun[\Aus{n}{d}][\W(M,\Stops)][][\kk]\simeq\wS[d][n][\W(M,\Stops)].
  \end{align*}
  Indeed,
  \begin{itemize}
  \item the leftmost equivalence is a consequence of
    \begin{itemize}
    \item the K{\"u}nneth formula \cite[Corollary~1.11]{GPS18}
      \[
        \functor{\W(M,\Stops)\otimes_\kk\W(\Sym{d}{\theDisk},\Stops[d][n])}{\W(M\times\Sym{d}{\theDisk},   (\Stops\times \Sym{d}{\theDisk}) \cup (M \times \Stops[d][n]))}[\simeq]
      \]
    \item the equivalence
      \[
        \functor{\W(M,\Stops)\otimes_\kk\Fun[\WF{n}{d}][\perf{\kk}][][\kk]}{\Fun[\WF{n}{d}][\W(M,\Stops)][][\kk],}[\simeq]
      \]
    \item and the equivalences
      \[
        \Fun[\WF{n}{d}][\perf{\kk}][][\kk]\simeq(\WF{n}{d})^\op\simeq\WF{n}{d}=\W(\Sym{d}{\theDisk},\Stops[d][n]),
      \]
    \end{itemize}
    where the existence of the second and third equivalences is a consequence of
    the fact that $\WF{n}{d}\simeq\perf{\Aus{n}{d}}$ is a dualisable object of
    the symmetric monoidal $\infty$-category $(\St^\kk,\otimes_\kk,\perf{\kk})$
    since the higher Auslander algebra $\Aus{n}{d}$ is proper and homologically
    smooth over $\kk$;
  \item the middle equivalence stems from the fact that
    $\WF{n}{d}\simeq\perf{\Aus{n}{d}}$ is the $\kk$-linear stable hull of the
    $\kk$-algebra $\Aus{n}{d}$;
  \item the rightmost equivalence is a consequence of Propositions~2.10 and 2.24
    in \cite{DJW19b}.
  \end{itemize}
  Consequently, there are a paracyclic object
  \[
    \functor[\W(M\times\Sym{d}{\theDisk},(\Stops\times \Sym{d}{\theDisk})\cup (M \times\Stops[d][\bullet]))]{\ParacyclicCat^\op}{\St^\kk}
  \]
  and homotopy equivalences
  \[
    K(\W(M,\Stops))\simeq\Omega^d|\wS[d][\bullet][\W(M,\Stops)]^\simeq|\simeq\Omega^d|\W(M\times\Sym{d}{\theDisk}, (\Stops\times \Sym{d}{\theDisk}) \cup (M \times \Stops[d][\bullet]) )^\simeq|
  \]
  which describe the $d$-fold delooping of the Waldhausen $K$-theory space of
  the partially wrapped Fukaya category $\W(M,\Stops)$ by means of partially
  wrapped Fukaya categories of symmetric products of marked disks. Compare with
  Section~1.2 in \cite{Tan19} where the case $d=1$ is discussed.
\end{remark}

\subsubsection{Symplecto-geometric description of the structure maps of $\WF{\bullet}{d}$}

The generating structure maps of the paracyclic object $\WF{\bullet}{d}$ admit a natural
symplecto-geometric interpretation: Fix $n\geq 0$. Firstly, the paracyclic shift
\[
  \functor[t]{\WF{n}{d}}{\WF{n}{d}}[\simeq],
\]
can be identified with the autoequivalence $\mathfrak{r}^{-1}[d]$ of $\WF{n}{d}$
which, according to \Cref{prop:Serre}, is induced by rotating the disk clockwise
by an angle of $\frac{2\pi}{n+1}$ (here $\mathfrak{r}$ denotes the Serre functor
of $\WF{n}{d}$).

Secondly, for $0\leq i\leq n+1$, the face functor
\[
  \functor[d_i]{\WF{n+1}{d}}{\WF{n}{d}}[\simeq]
\]
can be identified with the stop-removal functor which removes the stop
$\set{p_i}\times\Sym{d}{\theDisk}$ from $\Stops[d][n+1]$. These functors are
described as follows: Consider the smallest idempotent-complete stable
subcategory of $\WF{n+1}{d}$ containing the objects of the form
\[
  L\times L_{i-1,i}
\]
where $L$ is a product of $d-1$ mutually disjoint arcs in
$\theDisk\setminus\Stops[d][n]$ which are also disjoint from the arc
$L_{i-1,i}$; here, we identify $\Stops[][n]$ with
$\Stops[][n+1]\setminus\set{p_i}$. Equivalently, this subcategory is the
essential image of the Orlov functor
\[
  \functor[\iota_i]{\WF{n}{d-1}}{\WF{n+1}{d},}\qquad\functor*{X}{X\times
    L_{i-1,i}.}
\]
The functor $d_i$ is defined in terms of the Verdier quotient\footnote{Under the
  differential graded nerve, the Verdier quotient of $\kk$-linear stable
  $\infty$-categories corresponds to the Drinfeld quotient \cite{Dri04} of
  differential graded $\kk$-categories. This is a consequence of the fact that
  both quotients are characterised as homotopy cofibres in the corresponding
  $\infty$-categories.}
\[
  \begin{tikzcd}
    \WF{n}{d-1}\rar[hookrightarrow]{\iota_i}&\WF{n+1}{d}\rar[two
    heads]{d_i}&\WF{n+1}{d}/\WF{n}{d-1}
  \end{tikzcd}
\]
of $\WF{n+1}{d}$ by the essential image of $\iota_i$. As a consequence of
general stop-removal theorems \cite{GPS18,Syl19,Syl19a} we can identify the
target $\WF{n+1}{d}/\WF{n}{d-1}$ of the localisation functor $d_i$ with the
partially wrapped Fukaya category $\WF{n}{d}$.

Let $1\leq i\leq n$. Finally, we describe the degeneracy functor
\[
  \functor[s_i]{\WF{n}{d}}{\WF{n+1}{d}.}
\]
For this, we introduce a new stop into $\Stops[d][n]$ by adding positive
push-off of the component $\set{p_i}\times\Sym{d-1}{\theDisk}$ in the direction
of the Reeb flow on the disk. That is, we consider the new set of stops
\[
  \Stops[d][n+1]=\Stops[d][n]\cup\set{\set{p_i^{\varepsilon}}\times\Sym{d-1}{\theDisk}}
\]
where $p_i^{\varepsilon}$ is a point on the boundary of the disk obtained by
rotating $p_i$ by a small angle $\varepsilon>0$ in counter-clockwise direction.
The above construction gives rise to a pushforward functor $s_i$ with the
desired source and target categories. The restriction of the functor $s_i$ to
the full subcategory of $\WF{n}{d}$ spanned by those objects given by products
of disjoint arcs in $\theDisk\setminus\Stops[][n]$ is the identity functor, from
which it readily follows that $s_i$ is fully faithful. This is a consequence of
the fact that, at the geometric level, a product of arcs in
$\theDisk\setminus\Stops[][n]$ can also be seen as a product of arcs in
$\theDisk\setminus(\Stops[][n]\cup\set{p_i^\varepsilon})$; as can be seen from
Auroux's description of morphisms in terms of strand diagrams, the fact that
$\varepsilon$ is chosen to be sufficiently small guarantees that $s_i$ induces
an \emph{isomorphism} between the cochain complexes of morphisms between such
objects. The construction of the functor $s_i$ should be compared with the
forward stopped inclusions introduced in \cite{GPS18}.

\subsubsection{Relation to the categorified Dold--Kan correspondence}

As a consequence of a categorified version of the Dold--Kan correspondence
\cite{Dyc17}, the $d$-dimensional Waldhausen $\wS$-construction of $\perf{\kk}$
is characterised, as a $2$-simplicial object, by the existence of equivalences
of stable $\infty$-categories $\wS[d][0][\A]\simeq0$ and
\[
  \bigcap_{i=1}^{n}\ker(d_i\colon\wS[d][n][\A]\twoheadrightarrow\wS[d][n-1][\A])\simeq\begin{cases}
    \perf{\kk}&\text{if }n=d,\\
    0&\text{otherwise},
  \end{cases}
\]
for $n\geq1$. These vanishing conditions have a natural interpretation in terms
of the stop-removal functors described above.

\begin{proposition}
  Let $d\geq 1$. There are equivalences of stable $\infty$-categories
  $\WF{d}{0}\simeq0$ and
  \[
    \WF*{n}{d}\coloneqq\bigcap_{i=1}^{n}\ker(d_i\colon\WF{n}{d}\twoheadrightarrow\WF{n-1}{d})\simeq\begin{cases}
      \perf{\kk}&\text{if }n=d,\\
      0&\text{otherwise},
    \end{cases}
  \]
  for $n\geq1$.
\end{proposition}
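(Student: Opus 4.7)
The plan is to deduce the proposition directly from \Cref{thm:paracyclic_object} together with the categorified Dold--Kan characterisation of the $d$-dimensional Waldhausen $\wS$-construction recalled just above. More concretely, \Cref{thm:paracyclic_object} and the transport-of-structure discussion immediately preceding the proposition identify the simplicial object $n\mapsto\WF{n}{d}$ with $n\mapsto\wS[d][n][\perf{\kk}]$ in such a way that the symplecto-geometric face functor $d_i$ corresponds to the $i$-th simplicial face functor of the $\wS$-construction; the proposition thus reduces to a known property of $\wS[d][\bullet][\perf{\kk}]$.

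For the first assertion (read as the vanishing at simplicial degree zero, i.e.\ $\WF{0}{d}\simeq 0$ for $d\geq 1$, matching $\wS[d][0][\perf{\kk}]\simeq 0$), I would simply invoke the triviality already recorded in the preliminaries: whenever $n<d$ the partially wrapped Fukaya category $\WF{n}{d}$ is trivial, because iteratively applying the stop-removal sequence~\eqref{locseq} exhibits $\WF{0}{d}$ as a repeated Verdier localisation whose last stage is the fully wrapped Fukaya category of $\mathbb{C}^d$, which itself is trivial. Equivalently, one may transport the standard vanishing of $\wS[d][0][\perf{\kk}]$ along \Cref{thm:paracyclic_object}.

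For the second assertion, \Cref{thm:paracyclic_object} supplies, for each $n\geq 0$, an equivalence $\WF{n}{d}\simeq\wS[d][n][\perf{\kk}]$; by the paracyclic enhancement and the symplecto-geometric description of the face functors given just before the proposition, these equivalences intertwine $d_i\colon\WF{n}{d}\to\WF{n-1}{d}$ with the $i$-th face functor of the $\wS$-construction for every $1\leq i\leq n$. Consequently, $\WF*{n}{d}=\bigcap_{i=1}^{n}\ker(d_i)$ is identified with the corresponding intersection in $\wS[d][n][\perf{\kk}]$, and the categorified Dold--Kan correspondence of \cite{Dyc17} delivers the desired value: $\perf{\kk}$ precisely when $n=d$, and $0$ otherwise.

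The principal obstacle is not the Dold--Kan input, which enters as a black box, but the verification that the level-wise equivalences of \Cref{thm:WF-Auslander} are genuinely compatible with the simplicial face functors on both sides. Here this compatibility is essentially tautological once the paracyclic structure on $\WF{\bullet}{d}$ has been declared via transport from $\wS[d][\bullet][\perf{\kk}]$; a fully self-contained argument would instead need to match, under \Cref{thm:WF-Auslander}, the Orlov-image subcategories $\iota_i(\WF{n-1}{d-1})\subseteq\WF{n}{d}$ with the kernels of the $\wS$-construction face functors of $\perf{\Aus{n}{d}}$, and this single compatibility step is the only non-formal part of the argument.
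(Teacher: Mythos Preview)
Your approach is formally valid but takes a genuinely different route from the paper, and the difference is worth spelling out. You reduce the statement to the Dold--Kan characterisation of $\wS[d][\bullet][\perf{\kk}]$ by transporting along the equivalences of \Cref{thm:paracyclic_object}; as you yourself observe, once the paracyclic structure on $\WF{\bullet}{d}$ is declared by transport this renders the proposition essentially a tautology. The paper instead treats the $d_i$ as the \emph{geometric} stop-removal functors and argues directly: for $n<d$ it cites the triviality of $\WF{n}{d}$; for $n=d$ it notes that $\WF{d-1}{d}\simeq 0$, so each $d_i$ vanishes and the intersection is all of $\WF{d}{d}\simeq\perf{\kk}$; and for $n>d$ it uses the stop-removal localisation sequence to rewrite $\bigcap_i\ker(d_i)=\bigcap_i\operatorname{im}(\iota_i)$ and then argues by arc combinatorics that an object lying in every $\operatorname{im}(\iota_i)$ would have to be a product of $d$ disjoint arcs jointly supported near all $n$ stops $p_1,\dots,p_n$, which is impossible when $n>d$.

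What each approach buys: yours is economical and makes transparent that the proposition is a formal shadow of Dold--Kan, but it offers no symplectic content and leans on a compatibility of face maps that the paper asserts rather than proves (cf.\ the closing Remark of that section). The paper's argument is precisely the intended content---the surrounding text frames the proposition as giving the vanishing conditions ``a natural interpretation in terms of the stop-removal functors''---and it furnishes an independent Fukaya-categorical explanation that does not presuppose the identification of the geometric $d_i$ with the $\wS$-construction face maps.
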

\begin{proof}
  The claim is obvious for $n<d$ since $\WF{n}{d}$ vanishes in this case. For
  $n=d$ there is an equivalence
  \[
    \perf{\kk}\simeq\WF{d}{d}=\WF*{n}{d}=\bigcap_{i=1}^d\ker(d_i\colon\WF{d}{d}\twoheadrightarrow\WF{d-1}{d})
  \]
  since $\WF{d}{d-1}\simeq0$ and the higher Auslander algebra $\Aus{d}{d}$ is
  isomorphic to the base commutative ring $\kk$ in this case. It remains to
  prove that the intersection
  \[
    \WF*{n}{d}=\bigcap_{i=1}^{n}\ker(d_i\colon\WF{n}{d}\twoheadrightarrow\WF{n-1}{d})=\bigcap_{i=1}^{n}\operatorname{im}(\iota_i\colon\WF{n-1}{d-1}\hookrightarrow\WF{n}{d}),
  \]
  vanishes for $n>d$, where $\iota_i$ is the Orlov functor corresponding to the
  point $p_i\in\Stops[][n]$. This is clear since $\WF*{n}{d}$ is generated by
  Lagrangians of the form $\prod_{i=1}^dL_i$ where $L_1,\dots,L_d$ are pairwise
  non-intersecting arcs in $\theDisk\setminus\Stops[][n]$ which must be jointly
  supported near all the stops $p_1,\dots,p_n$; but the assumption that $n>d$
  implies that no such a collection of arcs exists. The claim follows.
\end{proof}

\begin{remark}
  An interesting challenge---which we do not pursue here---is to extend the
  above descriptions to a construction of the paracyclic object
  \[
    \functor[\WF{\bullet}{d}]{\ParacyclicCat^\op}{\St^\kk}
  \]
  carried entirely within the framework of partially wrapped Fukaya categories
  (see~\cite{Tan19} for a related discussion in the case $d=1$). In particular,
  notice that we have not described the higher order components of the
  paracyclic object $\WF{\bullet}{d}$ nor have we given symplectic explanations
  of the fact that these satisfy the required coherence equations.
\end{remark}

% \bibliographystyle{amsalpha}%
% \bibliography{library}

\providecommand{\bysame}{\leavevmode\hbox to3em{\hrulefill}\thinspace}
\providecommand{\MR}{\relax\ifhmode\unskip\space\fi MR }
% \MRhref is called by the amsart/book/proc definition of \MR.
\providecommand{\MRhref}[2]{%
  \href{http://www.ams.org/mathscinet-getitem?mr=#1}{#2}
}
\providecommand{\href}[2]{#2}

\end{document}